\documentclass[reqno,12pt,a4paper]{amsart}
\usepackage{pgfplots}
\usepackage[english]{babel}
 \pdfoutput=1
\usepackage{amsmath,amsthm,amssymb,amsfonts, mathdots}
\usepackage{bbm}
\usepackage{scrtime, cancel}
\usepackage{enumitem, color, comment}

\definecolor{pinegreen}{rgb}{0.0, 0.47, 0.44}
\definecolor{brilliantrose}{rgb}{1.0, 0.33, 0.64}
\usepackage{mathtools}
\usepackage{ifpdf}
\ifpdf
\usepackage[backref]{hyperref}
\else
\usepackage[hypertex]{hyperref}
\fi
\usepackage{pdfsync,verbatim}
\usepackage{color}
\mathtoolsset{showonlyrefs}

\numberwithin{equation}{section}   %%numera le equazioni sezione per sezione

\allowdisplaybreaks
\newtheorem{theorem}{Theorem}[section]
\newtheorem{lemma}[theorem]{Lemma}
\newtheorem{proposition}[theorem]{Proposition}
\newtheorem{corollary}[theorem]{Corollary}

\theoremstyle{definition}
\newtheorem{remark}[theorem]{Remark}

\usepackage{graphicx}

\makeatletter
\newcommand*\bigcdot{\mathpalette\bigcdot@{.5}}
\newcommand*\bigcdot@[2]{\mathbin{\vcenter{\hbox{\scalebox{#2}{$\m@th#1\bullet$}}}}}
\makeatother

\newcommand{\R}{\mathbb{R}}

\newcommand{\N}{\mathbb{N}}

\DeclareMathOperator{\tr}{tr}             % trace
\newcount\dotcnt\newdimen\deltay
\def\Ddot#1#2(#3,#4,#5,#6){\deltay=#6\setbox1=\hbox to0pt{\smash{\dotcnt=1
\kern#3\loop\raise\dotcnt\deltay\hbox to0pt{\hss#2}\kern#5\ifnum\dotcnt<#1
\advance\dotcnt 1\repeat}\hss}\setbox2=\vtop{\box1}\ht2=#4\box2}

\newcommand{\norm}[1]{\left\lVert#1\right\rVert}

\newcommand{\Red}{\color{red}}

\pgfplotsset{compat = newest}

\makeatletter
\def\author@andify{%
  \nxandlist {\unskip ,\penalty-1 \space\ignorespaces}%
    {\unskip {} \@@and~}%
    {\unskip \penalty-2 \space \@@and~}%
}
\makeatother

\title[Sharp variational, jump and oscillation bounds]{%%Strong and weak type  
Sharp variational, jump and oscillation bounds \\ in a general Gaussian context}

\author{Valentina Casarino}
\address{DTG, Universit\`a degli Studi di Padova\\ Stradella san Nicola 3 \\I-36100 Vicenza \\ Italy}
\email{valentina.casarino@unipd.it}
\author{Paolo Ciatti}
\address{Dipartimento di Matematica "Tullio Levi Civita", Universit\`a degli Studi di Padova\\Via Trieste, 63, 35131 Padova,  \\ Italy}
\email{paolo.ciatti@unipd.it}
\author{Peter Sj\"ogren}
\address{Mathematical Sciences,  University of Gothenburg and  Mathematical Sciences,
Chalmers University of Technology  \\ SE - 412 96 G\"oteborg, Sweden}
\email{peters@chalmers.se}

\keywords{Jump quasi-seminorm,   oscillation inequalities, variation seminorm, Ornstein--Uhlenbeck semigroup,Mehler kernel}

\subjclass[2020]{
37A46,   	%%Relations between ergodic theory and harmonic analysis
37A30,%%   	Ergodic theorems, spectral theory, Markov operators {For operator ergodic theory, see mainly 47A35}
47D03, %%% - Groups and semigroups of linear operators
42B99%% - Harmonic analysis in several variables
}

\thanks{The first and second authors are members of the Gruppo Nazionale per l'Analisi Matematica, la Probabilità e le loro Applicazioni (GNAMPA)
of the Istituto Nazionale di Alta Matematica (INdAM).
This research was carried out while the third author was GNAMPA Professore Visitatore
at the University of Padova, Italy. The third author also profited from a grant from the Royal Society
of Arts and Sciences in Gothenburg and another from the Royal Swedish Academy of Sciences. He
is also grateful to the University of Padova for its hospitality during several visits.
}

\date{\today}

\begin{document}

\maketitle
 
\begin{abstract}
 We consider a general, nonsymmetric Ornstein--Uhlenbeck semigroup $(\mathcal H_t)_{t>0}$.
 We prove an  $L^p$ bound for 
the jump quasi-seminorms  for $1 < p < \infty$ and 
a weak type (1,1) oscillation inequality,  both with respect to the invariant measure.
These results are established for the
 order  
$\varrho=2$.
To do so, 
we analyze specific components
 of $(\mathcal H_t)_{t>0}$, by distinguishing between small and large values of $t$, and between local and global spatial zones.
This  decomposition allows us to explicitly identify which parts of the semigroup remain bounded 
and which  are responsible for the failure of boundedness, both in a weak and in a strong sense, 
even with respect to Lebesgue measure.

\end{abstract}

 \section{Introduction}
This paper is the last in a series \cite{CCS6, CCS8, CCS9}
where we started  studying variational estimates for the Ornstein--Uhlenbeck  semigroup. 
Here we conclude our analysis, arriving at a complete picture of  $\varrho$-th order variational, jump, and
oscillation  inequalities for
$\varrho \in [1,+\infty)$. The formal definitions 
of variation, jump, and oscillation seminorms, 
along with the main statements of this paper, will be given in Section \ref{S.var}.

The study of variational inequalities has recently become a central theme in harmonic analysis and ergodic theory, 
providing a refined understanding of the convergence of families of operators beyond the classical maximal estimates. 
Inspired by the pioneering work of J. Bourgain, V. F. Gaposhkin  and D. Lépingle  
and the subsequent developments by  M. Mirek, W. S\l\,\!omian,  E. M. Stein,  T. S. Szarek, J. Wright,  B. Wróbel and P. Zorin-Kranich, 
\cite{Lepingle, Gaposhkin1, Gaposhkin2, Bourgain, Bourgain_etal, Jones2, Mirek1, Mirek2, Mirek3, Mirek4, Mirek5, Mirek6}, 
our main objective is 
to provide a complete characterization of the $L^p$ and weak type $(1,1)$ boundedness for $\varrho$-variations, oscillations, 
and $\lambda$-jumps associated with a general, not necessarily symmetric, 
Ornstein-Uhlenbeck semigroup $(\mathcal{H}_t)_{t>0}$.

The Ornstein-Uhlenbeck semigroup on $L^p(\mathbb{R}^n)$ is given by the classical Kolmogorov representation  %\cite{Kolmo}
\begin{equation}\label{Kolmo}
\mathcal H_t
f(x)=
\int  
f(e^{tB}x-y)d\gamma_t (y)\,, \quad x\in\R^n\,,
\end{equation}
where $B$ is a real matrix whose eigenvalues have negative real parts, and $d\gamma_t$ is a normalized Gaussian measure, given by
\[
d\gamma_t (x)=
(2\pi)^{-\frac{n}{2}}
(\det \, Q_t)^{-\frac{1}{2} }
e^{-\frac12 \langle Q_t^{-1}x,x\rangle}
dx  \,,\qquad \text{ $t\in (0,+\infty]$.}\]
Here the  $Q_t$ are  covariance  matrices, defined as
\begin{equation}\label{defQt}
Q_t=\int_0^t e^{sB}Qe^{sB^*}ds, \qquad \text{ $t\in (0,+\infty]$},
\end{equation}
where  $Q$ is  a real, symmetric and positive definite $n\times n$ matrix.

The measure $d\gamma_\infty$ defined above is the unique probability measure which is invariant
with respect to the semigroup. Being a Gaussian measure, it is not globally doubling.
Each $\mathcal{H}_t$
 is an integral operator with respect to $d\gamma_\infty$ defined by the Mehler kernel.

Our investigation is motivated by the fact that for symmetric semigroups the critical threshold for the boundedness of
 variational, jump, and
oscillation  operators is  $\varrho=2$. 
We shall see that this threshold remains in our more general case. Only for some parts of
the semigroup it is possible to obtain variational estimates  for $1\le \varrho\le 2$.

In the symmetric case, where the operators $\mathcal{H}_t$ are self-adjoint on $L^2(\gamma_\infty)$, 
the $L^p$ boundedness of the variation for $\varrho > 2$ 
and of the oscillation for $\varrho = 2$ follows from \cite{JR} and 
is based on  Rota's Alternierendes Verfahren theorem 
\cite{Rota}. 
Regarding jumps,   when $\varrho=2$ the $L^p$ boundedness follows from a theorem holding for a large class of symmetric semigroups 
\cite[Theorem 1.2]{Mirek1}. 
We refer to \cite{Harboure, Crescimbeni, Betancor1,  Ma1, Ma2, Betancor2, Betancor3}
for several other variational estimates  in a symmetric Gaussian context.

However, in the nonsymmetric Gaussian setting the lack of symmetry  presents significant technical challenges, and only a few results were known \cite{AlmeidaMedit, Almeida,
 Le Merdy}. 
To treat this case, we exploit the asymptotic behavior of the Mehler kernel, separating  small and large values of $t$.
We further distinguish between
   a local region, where the Gaussian measure behaves like the Lebesgue measure,
  and a global region, where the Gaussian density dictates the decay of the operators. 

From the variational point of view, the  worst case occurs  when $(x,u)$ belong 
to  the local region and $t$ is small. (see \cite[Section 8]{CCS8}).

\medskip

While it is known that no variational bounds 
hold at $\varrho = 2$
 for the full Ornstein--Uhlenbeck semigroup \cite{Qian, CCS8}, 
 both jump and oscillation inequalities have remained largely unexplored at or below this endpoint.
The main contributions of this work are twofold. 
First of all, we establish in Theorem \ref{thm:main1} the $L^p(\gamma_\infty)$ boundedness 
of the jump quasi-seminorms of order $\varrho=2$ for $1 < p < \infty$ 
(we mention in passing that the weak type (1,1) was recently proved for $\varrho=2$  in \cite{CCS9}).
Then, in Theorem \ref{thm:main2}, we prove a weak type (1,1) oscillation inequality with respect to the invariant measure.

Moreover, 
we show that these  bounds are sharp, demonstrating the failure of such estimates for $\varrho < 2$.
Our counterexample is based on Rademacher functions and is inspired by  a celebrated, one-dimensional  counterexample due to Qian \cite{Qian}.

Finally, we discuss the change from Gaussian to Lebesgue measure $dx$,
 illustrating how  these bounds fail  in the $L^p(dx)$ setting.

\medskip

However, as already observed in \cite{CCS8}, one can obtain 
better bounds by analyzing specific components of the Mehler kernel 
rather than the full kernel. 
This approach allows us to establish variational, oscillation, and jump inequalities 
for any $\varrho\ge 1$  for certain parts of the Ornstein--Uhlenbeck semigroup, 
whereas such bounds are typically only known for $\varrho\ge 2$. 
Table 1 summarizes these results.

 \medskip
\begin{table}[htbp]
    \centering
    \begin{tabular}{l | c c c c}
        % Intestazioni delle colonne (spazio in alto)
        & { Local region, } & { Local region, } &{ Global region, } \\
        &  \textbf{$t\in(0,1]$}  &   {$t\ge 1$}  & {$0<t<\infty$}    \\\hline
        % Riga 1 con intestazione a sinistra
        ${\varrho-\text{var.},\, \varrho>2}$  & B & B &B \\
        % Riga 2 con intestazione a sinistra
        ${\varrho-\text{var.},\, {{\varrho\in [1,2]}}}$ & {\Red{U}} &B& B \\
        % Riga 3 con intestazione a sinistra
         ${\varrho-\text{oscill.},\, \varrho\ge 2}$&  B & B& B  \\
        % Riga 4 con intestazione a sinistra
         ${\varrho-\text{oscill.},\,{{ \varrho\in[1,2)}}}$ & {\Red{U}} &  B& B  \\
        % Riga 5 con intestazione a sinistra
         ${\varrho-\text{jumps},\, \varrho\ge 2}$  & B & B & B \\
        % Riga 6 con intestazione a sinistra
         ${\varrho-\text{jumps},\, {{\varrho\in [1,2)}}}$  & {\Red{U}} &   B&  B  \\
    \end{tabular}
    \vspace{0.2cm} % Spazio tra la tabella e la didascalia
    \caption{$L^p$ boundedness and weak type (1,1)  for the Ornstein--Uhlenbeck semigroup, with respect to the invariant measure. }
    \label{tab:mia_tabella}
\end{table}

 To the best of our knowledge, 
 this is the first instance of a nonsymmetric semigroup 
 where variation, jumps, and oscillation
  of order $\rho$ are completely investigated for all $\rho \ge 1$.

\subsection*{Contents of the sections } 
Section \ref{S.var} introduces variational, jump and oscillation inequalities
and outlines the state of the art for the Ornstein--Uhlenbeck semigroup. We also state our
two main results, Theorems  \ref{thm:main1} and  \ref{thm:main2}, which 
   establish a strong-type jump inequality and a weak-type $(1,1)$ oscillation inequality, respectively, both at $\varrho = 2$.  
 In Section \ref{s:prelim}
 explicit formulas for the Mehler kernel and  some upper bounds 
 for its time derivative are provided. 
 We also describe  the spatial localization procedure, 
 and  recall the hypercontractivity properties   of $\mathcal H_t$. These are used in 
  Section \ref{s:strong_type_large_times} to show that  the variation operator of order $\varrho\ge 1$
 for large times is bounded on $L^p(\gamma_\infty)$
 This implies  $L^p$ boundedness for the  jump operator  and for the oscillation
 as well, in the same range of $\varrho$ and $t$.
 
In Section \ref{s:smallt_global}
we finish the proof of Theorem \ref{thm:main1} 
by proving the $L^p(\gamma_\infty)$
 boundedness of the jump operator for small times in the global region. 
The argument relies on  dyadic decompositions of the time parameter, which allows us to estimate the operator piece by piece.
In Section \ref{s:oll} we complete
 the proof of  Theorem \ref{thm:main2} by establishing 
uniform  oscillation bounds for the semigroup.
The analysis is reduced to the local component for small times, 
which is then decomposed into difference operators and a main operator.
By applying known bounds and an earlier result by H. Liu \cite{Liu} 
to the main part, we obtain weak type (1,1) 
and strong type $(p,p)$
 estimates for the oscillation. 
Section \ref{s:Qian} shows the failure 
of uniform oscillation and jump bounds for 
$\varrho<2$.

Finally, in Section \ref{s:Leb}
we replace $d\gamma_\infty$ 
by Lebesgue measure.

\subsection*{Notation}

 We shall denote by $C<\infty$ and $c>0$ constants that may vary from place to place. 
 Unless otherwise explicitly indicated, these constants depend only on $n$, $Q$, $B$, and
on $p$ if relevant.
For two non-negative quantities $a$ and $b$, we write $a\lesssim b$, or equivalently  $b\gtrsim a$, if $a\leq C b$ for some $C$.
The symbol $a\simeq b$ 
means that $a\lesssim b$ and $b\lesssim a$.
By $\mathbb N$ we mean $\{0,1,\dots\}$, and $\N_+$ will denote the set of strictly postive integers.
If $A$ is an $n \times n$ matrix, we write $\|A\|$ 
for its operator norm on $\R^n$ endowed  with the Euclidean norm.
The symbol $\mathcal I$ will always denote an interval in $\R_+$.
We shall adopt the  dot notation for differentiation with respect to the time variable $t$,
  writing  $ \dot K_t = \partial_t K_t$.

\medskip

The authors would like to thank Professor Mariusz Mirek for several enlightening discussions on the subject of this paper.
The first author would like to thank Professor A. Rhandi, 
 for bringing to her attention the case discussed in Section \ref{s:Leb}.

\section{Variational, jump and oscillation inequalities}\label{S.var}

 In this section, we define these three types of inequalities for the Ornstein-Uhlenbeck semigroup  $\left(\mathcal H_t \right)_{t>0} $, and describe what is known at present.  We make here no distinction between small and large times or local and global regions.  
 For  variational inequalities in more general contexts, we refer the reader to   the recent monograph \cite{Krause}.

\subsection{Variation seminorms}\label{subs:var} ~
If 
$\mathcal I {\color{black}{\subset}} \R_+$ is an interval and $f \in L^1(\gamma_\infty)$,   the $\varrho$-th order variational seminorm of $\mathcal H_t f(x)$   on   $\mathcal I $ is defined by
           \begin{equation}\label{def:intro_rho2_OU}
  \|\mathcal H_t \,f(x)\|_{v(\varrho), \mathcal I} = \sup\, \left( \sum_{i=1}^N |\mathcal H_{t_i} f(x)- \mathcal H_{t_{i-1}}f(x)|^\varrho \right)^{1/\varrho},\qquad x\in \R^n,\quad
  1 \le \varrho < \infty,
\end{equation}
where 
 the supremum is taken over all finite, increasing
 sequences $\left(t_i \right)_0^N,  \; N \ge 1$,   of points in $\mathcal I$.
 
Regarding variational inequalities for a general Ornstein--Uhlenbeck semigroup,  the picture is complete, since the following facts  are known:
\begin{enumerate}
 \item
 For $\varrho>2$  the variation operator mapping $f\in L^p(\gamma_\infty)$ to the function 
\begin{equation}\label{def:fnct}
x\mapsto \|\mathcal H_tf(x)\|_{v(\varrho),\R_+} 
\end{equation}
is bounded 
from $L^p(\gamma_\infty)$ to $L^{p}(\gamma_\infty)$, $1<p<\infty$,   (see \cite[Corollary 4.5]{Le Merdy} and   \cite{Almeida}). 
 \item
For $\varrho>2$  the same operator 
is  of weak type $(1,1)$ for $d\gamma_\infty$, that is,
\begin{equation}\label{ineq:var_weak}
\|\, \|\mathcal H_t \,f(x) \|_{v(\varrho), \R_+}\|_{L^{1,\infty}(\gamma_\infty)}\lesssim_{\varrho}  \|f\|_{L^1(\gamma_\infty)},
\end{equation}
(see \cite{CCS8}, and \cite{CCS6} for a different proof in the one-dimensional case).
\item 
Items (1) and (2)  are false  for  $1\le \varrho\le 2$, since  the  operator
$    f \mapsto \|\mathcal{H}_t f (x)\|_{v(2),\R_+}$, $x\in \R^n$,
  is not of strong nor weak type $(p,p)$ with respect to $d\gamma_\infty$,  for any $p \in [1,\infty)$
(see \cite[Section 8]{CCS8}).
\end{enumerate}
\medskip

\subsection{Jump quasi-seminorms}\label{subs:jump}~
Given an interval  $\mathcal I \subset \mathbb R_+$,  $f \in L^1(\gamma_\infty)$ and $\lambda>0$, the $\lambda$-jump counting function for 
$\mathcal H_t\, f$ in $\mathcal I$ at   $x\in\R^n$
is defined as
\begin{multline}\label{def:jump}
 N_\lambda (\mathcal H_t f(x):\:t\in\mathcal I)=     
 \sup\big\{
N \in\mathbb N_+:\,
\text{  there exist points }\,
  t_0 < t_1 < \dots < t_N \text{ in $\mathcal I$ }   
 \\
\text{ such that }\,
|\mathcal H_{t_{i}} f(x) - \mathcal H_{t_{i-1}} f(x)| > \lambda
 \,\text{ for } \, i=1,\dots, N
 \big\}.
 \end{multline}
Then the jump quasi-seminorm  of order $\varrho$ of
$\mathcal H_t\, f$ in $\mathcal I$ is              
\begin{equation}\label{jump_seminorm_OU}
 J_\varrho^p  (\mathcal H_t f:\,t\in\mathcal I) = 
 \sup_{\lambda>0}
 \| \lambda \, \big(N_\lambda \big(\mathcal H_t f(\cdot):  t\in\mathcal I \big)\big)^{1/\varrho}\|_{L^p(\gamma_\infty)},
\quad 1 \le p,\varrho < \infty.
\end{equation}
Notice that the quasi-seminorms $ J_\varrho^p(\cdot) $ are decreasing  in $\varrho$.

The notion of 
weak jump quasi-seminorms  may be given in a similar way, 
by replacing  the Lebesgue space $L^p(\gamma_\infty)$   in  \eqref{jump_seminorm_OU}  by a   Lorentz space $L^{p,q}(\gamma_\infty)$
(see  \cite[formula (1.2)]{Mirek1});
we are mainly interested in  the case $(p,q)=(1,\infty)$, where 
\begin{equation}\label{jump_seminorm_weak}
 J_\varrho^{1,\infty}  (\mathcal H_t f:\,t\in\mathcal I)=
 \sup_{\lambda>0}
 \| \lambda \, \big(N_\lambda \big(\mathcal H_t f(\cdot):  t\in\mathcal I \big)\big)^{1/\varrho}\|_{L^{1,\infty}(\gamma_\infty)}.
\end{equation}

The main question studied concerning the jumps is whether $J_\varrho^p  (\mathcal H_t f:\,t\in\mathcal I)$  is controlled by $\|f\|_{L^p(\gamma_\infty)}$,
or  $J_\varrho^{1,\infty}  (\mathcal H_t f:\,t\in\mathcal I)$ by $\|f\|_{L^1(\gamma_\infty)}$.   

A central role in our work is played by the following estimate
\begin{equation}\label{domination}
 J_\varrho^p  (\mathcal H_t f:\,t\in\mathcal I) \le \| \,\|\mathcal H_t \,f(\cdot) \|_{v(\varrho), \mathcal I}\|_{L^p(\gamma_\infty)},\qquad \varrho \ge 1,\quad p\ge 1.
\end{equation}
stating that  the  jump quasi-seminorms
$ J_\varrho^p $ are dominated by 
the $L^p$ norms of 
variational seminorms of order $\varrho$. This                   
follows from  the inequality
\begin{align}\label{dom_pointwise}
\lambda\big( N_\lambda (\mathcal H_t f(x):\,\,t\in\mathcal I)\big)^{1/\varrho}\le   \|\mathcal H_t \,f(x)\|_{v(\varrho), \mathcal I}\,, \qquad  x\in\R^n,
\end{align}
holding for all $\varrho\ge 1$ and  $\lambda>0$, which is essentially Chebyshev's inequality in $\ell^\varrho$. 
                           
Regarding jump inequalities for a general Ornstein--Uhlenbeck semigroup, the following facts  are known:
\begin{enumerate}
 \item[(1')]
For $1<p<\infty$ and $\varrho>2$,
the  jump quasi-seminorms  of the  Ornstein--Uhlenbeck semigroup 
 $\left(\mathcal H_t\right)_{t>0}$ are bounded on $L^p(\gamma_\infty)$, that is,
\begin{equation*}
 J_\varrho^p  (\mathcal H_t f:\,t>0) \le  C\, \|f\|_{L^p(\gamma_\infty)};
\end{equation*}
 this is immediate from \eqref{domination} and item (1)).
 \item[(2')]
For   all $f\in L^1(\gamma_\infty)$ one has
\begin{equation}\label{ineq:weaktype} J_2^{1,\infty} (\mathcal H_t f:\,t>0) \le C\,\|f\|_{L^1(\gamma_\infty)};
 \end{equation}
 see \cite[Theorem  1.2]{CCS9}.
\item[(3')]
In the light of item (3),
the weak  jump quasi-seminorms $ J_\varrho^{p,\infty} (\mathcal H_t \,f :\,t>0)$ are not bounded by $\|f\|_{L^p(\gamma_\infty)}$ or $\|f\|_{L^{p,\infty}(\gamma_\infty)}$   for any
$\varrho\in[1,2)$, $1\le p<\infty$. In fact, 
  it is known that
for all $1\le\varrho<\varrho'< \infty$ 
\begin{equation}\label{ineq:Mariusz}
\|\, \|\mathcal H_t \,f(x) \|_{v(\varrho'), \R_+}\|_{L^{p,\infty}(\gamma_\infty)}
 \lesssim_{\varrho, \,\varrho'}  J_\varrho^{p,\infty} (\mathcal H_t \,f:\,t>0).
\end{equation}
For this we refer to \cite[formula (1.13)]{Mirek5} or to  \cite[formula (2.20)]{Mirek4}, both  based on an earlier remark  by Bourgain \cite{Bourgain}.

\item[(4')]
As a consequence of (3'),
the   jump quasi-seminorms $ J_\varrho^{p} (\mathcal H_t \,f :\,t>0)$ are not bounded by $\|f\|_{L^p(\gamma_\infty)} $
 for any
$\varrho\in[1,2)$ and $1<p<\infty$.
\end{enumerate}

\begin{remark}
For $\varrho>2$ and any interval $\mathcal I\subset \R_+$ one has
\begin{equation}\label{domination3}
\|\,\|\mathcal H_t \,f\|_{v(\varrho), \mathcal I}\|_{L^{1,\infty}(\gamma_\infty)}
\lesssim_{\varrho}
 J_2^1 (\mathcal H_t \,f:\,t\in\mathcal I) \le \| \,\|\mathcal H_t \,f \|_{v(2), \mathcal I}\|_{L^1(\gamma_\infty)},
\end{equation}
with the first inequality (which is a consequence of \eqref{ineq:Mariusz})
 failing for $\varrho=2$. 
The second bound is \eqref{domination} with $\varrho = 2$ and $p=1$.
In  view of \eqref{domination3}, 
the jump inequality \eqref{ineq:weaktype} of weak type $(1,1)$      
may be seen as  an endpoint refinement at $\varrho=2$     
of the variational inequality \eqref{ineq:var_weak} described in item (2).
 
\end{remark}

However,  
 nothing is known about the $L^p$ boundedness of jumps for $\varrho=2$.
The first result in this paper is exactly a  strong type jump inequality 
for $ \mathcal H_t$ when $\varrho=2$. 
\begin{theorem}\label{thm:main1}
 For $1<p<\infty$,
the  jump quasi-seminorms of order $\varrho =2$  for the  Ornstein--Uhlenbeck semigroup 
 $\left(\mathcal H_t\right)_{t>0}$ are bounded on $L^p(\gamma_\infty)$, that is,
\begin{equation*}
 J_2^p  (\mathcal H_t  f:\,t>0)\le  C\, \| f\|_{L^p(\gamma_\infty)}.
\end{equation*}
\end{theorem}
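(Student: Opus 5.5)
The plan is to split the semigroup into pieces according to the size of $t$ and the spatial region. The jump quasi-seminorm is subadditive up to a constant: if $\mathcal H_t = \mathcal A_t + \mathcal B_t$, then a $\lambda$-jump of $\mathcal H_tf(x)$ forces a $\lambda/2$-jump of one of the two pieces in the corresponding subintervals. Hence
\[
N_\lambda(\mathcal H_t f(x)) \le N_{\lambda/2}(\mathcal A_t f(x)) + N_{\lambda/2}(\mathcal B_t f(x)) + 1 ,
\]
and the additive constant is handled by a maximal function. Splitting $\R_+=(0,1]\cup[1,\infty)$ costs one more jump and a term bounded by $2\sup_t|\mathcal H_tf|$. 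This term is controlled on $L^p(\gamma_\infty)$ by the maximal theorem for the Ornstein--Uhlenbeck semigroup. So it suffices to bound $J_2^p$ for three pieces: large $t$; small $t$ in the global region; small $t$ in the local region.

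\emph{Large times.} For $t\ge1$ I would use the explicit Mehler kernel together with hypercontractivity. Writing
\[
\mathcal H_tf-\mathcal H_sf=\int_s^t \partial_r\mathcal H_r f\,dr ,
\]
one gets $\|\mathcal H_tf(x)\|_{v(1),[1,\infty)}\le \int_1^\infty|\partial_r\mathcal H_rf(x)|\,dr$. The kernel of $\partial_r\mathcal H_r$ decays like $e^{-cr}$, so hypercontractivity gives $\|\partial_r \mathcal H_r f\|_{L^p(\gamma_\infty)}\lesssim e^{-cr}\|f\|_{L^p(\gamma_\infty)}$ for $r\ge1$. This bounds the $1$-variation, and by \eqref{domination} also $J_2^p$, since $J_\varrho^p$ decreases in $\varrho$.

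\emph{Small times, global region.} Localize with a cutoff $\eta(x,u)$ supported where $|x-u|\gtrsim 1/(1+|x|)$. Then decompose $(0,1]$ dyadically into $I_j=[2^{-j-1},2^{-j}]$. On each $I_j$ I would estimate the $1$-variation by $\int_{I_j}|\dot K_t|\,dt$. Outside the local region the kernel derivative has enough Gaussian decay to give an integrable operator with norm summable in $j$; this uses the upper bounds for $\dot K_t$ and the standard global-region estimates (as in \cite{CCS8}). This again yields $v(1)$-bounds on $L^p(\gamma_\infty)$, and therefore $J_2^p$-bounds.

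\emph{Small times, local region (main obstacle).} Here the measure $\gamma_\infty$ is comparable to Lebesgue measure on the local balls $B(x,1/(1+|x|))$. I would compare the localized Mehler kernel with a Euclidean heat-type semigroup $e^{-t\mathcal L}$ adapted to $Q$, for instance the convolution semigroup with covariance $Q_t\approx tQ$, precomposed with the flow $e^{tB}$. The difference between the two kernels satisfies improved derivative bounds, by a factor $\sqrt t\,(1+|x|)$ in the local region, and these give a $v(1)$-bound after the same dyadic argument. For the model convolution semigroup the $J_2^p$ bound on $L^p(dx)$ holds by \cite[Theorem 1.2]{Mirek1}, since it is a symmetric diffusion semigroup on $L^2(dx)$. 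One then transfers this bound to $\gamma_\infty$ by the usual local covering argument: cover $\R^n$ by balls of radius $\simeq 1/(1+|x|)$ on which the density of $\gamma_\infty$ is essentially constant, and use bounded overlap. The delicate point is the drift term $e^{tB}x$, which is not small compared with $\sqrt t$ when $|x|$ is large. I would handle it by a change of variables $u\mapsto e^{-tB}u$ and absorb the resulting commutator into the difference-operator part; this is the step that might need extra care.

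Summing the three pieces proves Theorem \ref{thm:main1}.
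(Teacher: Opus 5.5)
Your architecture is the paper's: large $t$ via a $v(1)$ bound and hypercontractivity (Proposition \ref{pr:strongpp_tlarge}), local small $t$ via comparison with the heat semigroup and \cite{Mirek1} (the paper quotes this from \cite{CCS9}), and global small $t$ via the majorant with kernel $\int_0^{c_0}|\dot K_t(x,u)|\,dt$ on the global region. There is a genuine gap in this last step, which is the actual new content of the proof, because your justification for it fails.

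First, the dyadic pieces $T_j$ (kernel $\int_{I_j}|\dot K_t|\,dt$ on the global region) do not have summable norms. Test on the indicator of a ball of radius $2^{-j/2}$ centred at a point $u_0$ with $|u_0|^2=C2^j$, $C$ large. This gives $\|T_j\|_{L^p(\gamma_\infty)\to L^p(\gamma_\infty)}\ge c>0$ for every $j$. Different $j$ are separated only spatially ($|x|^2\simeq 2^j$), not in norm.

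Second, the global-region estimates of \cite{CCS4, CCS8} are only of weak type $(1,1)$, and the majorant has no upper endpoint to interpolate with. Consider the part where $R(u)<R(x)$. This is where $K_t(x,\cdot)\,d\gamma_\infty$ actually lives, since it is the Gaussian with mean $e^{tB}x$ and covariance $Q_t$. There the transport term of size $|x|/\sqrt t$ in $\dot K_t$ is genuinely present. Applied to $f\equiv1$, the majorant is therefore about $\int_{CA/|x|^2}^{c_0}|x|\,t^{-1/2}\,dt\simeq|x|$, so it is unbounded on $L^\infty(\gamma_\infty)$. The Gaussian decay in $|u-D_tx|^2/t$ cannot repair this.

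What is needed is the argument of Section \ref{s:smallt_global}, which has three ingredients:
\begin{itemize}
\item Split by the sign of $R(u)-R(x)$. Use \eqref{dotKeps} (factor $e^{R(x)}$, centre $D_tx$) when $R(u)\ge R(x)$, and \eqref{dotKeps1} (factor $e^{R(u)}$, centre $e^{tB}x$) when $R(u)<R(x)$.
\item Decompose by the normalised distance $2^{m-1}\sqrt t<|u-D_tx|\le 2^m\sqrt t$ (respectively with $e^{tB}x$) rather than dyadically in $t$. The factor $e^{-c2^{2m}}$ then gives summability.
\item For $t\gtrsim 2^{2m}/|x|^2$, use $t\simeq|x-u|/|x|$ and $|R(x)-R(u)|\simeq|x|\,|x-u|$ (Lemma \ref{size}).
\end{itemize}
The pieces with $t\lesssim 2^{2m}/|x|^2$ are bounded on $L^1$ and $L^\infty$. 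The remaining piece with $R(u)\ge R(x)$ is interpolated between the weak type $(1,1)$ bound of \cite{CCS4} and an $L^\infty$ bound. The remaining piece with $R(u)<R(x)$ is bounded on $L^p(\gamma_\infty)$ by a weighted Schur test with $w=e^{\alpha R}$, $0<\alpha<1/p$, which trades the gain $e^{-(R(x)-R(u))}$ against the weight. Some such $p$-dependent argument is indispensable, and your plan does not contain one.

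Two smaller points. In the local part, no change of variables is needed for the drift. The heat-kernel comparison is used only for $t\le\min(1,|x_j|^{-2})$, where $|D_tx-x|\lesssim\sqrt t$. For $\min(1,|x_j|^{-2})<t\le1$ the $v(1)$-variation is bounded directly. Also, your splitting inequality should read $N_\lambda(a+b)\le N_{\lambda/4}(a)+N_{\lambda/4}(b)$, because the jumps of one summand need not occur at consecutive points. The version with $\lambda/2$ and $+1$ fails in general, but the correction only changes constants.
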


It should be mentioned that in the symmetric case (that is, when each  $\mathcal H_t$ is a self-adjoint operator)
 the boundedness of $J^p_2(\mathcal H_t f:t>0)$  follows
from
\cite[Theorem 1.2]{Mirek1}.

\subsection{Oscillation inequalities}~
Given a finite  increasing sequence $(t_{i})_{0}^N$ in $\mathcal I\subset \mathbb{R}^{+}$  and $x\in\R^n$,
the oscillation of order $\varrho$, $1\le\varrho<\infty$,
of $\mathcal{H}_t f(x)$  in $\mathcal I$ is defined as
\[  \mathcal O^\varrho_{ \{t_i\}, N}   (\mathcal H_t f(x) :\,t\in \mathcal I)
=\left(\sum_{i=1}^{N}\sup_{t_{i-1}\le
t\le t_{i}}| \mathcal H_t \,f(x)-
 \mathcal H_{t_i} \,f(x)|^{\varrho}\right)^{1/\varrho}.\]
    This defines a   seminorm.
    
With $1\le p<\infty$, a uniform oscillation 
inequality is a bound of the form 
\[\sup_{N\in \N_+} \sup_{ \{t_i \} } \|\mathcal O^\varrho_{ \{t_i\}, N}   
(\mathcal H_t  f(x):\,t\in \mathcal I)\|_{L^p(\gamma_\infty)}\lesssim 
\|f\|_{L^p(\gamma_\infty)}.\]
Here and in the rest of the paper
the double $\sup$  denotes the supremum over all increasing
sequences $\left(t_i \right)_0^N$ of points in $\mathcal I$ and all $N \in \N_+$.
The left-hand $L^p$ norm here can be replaced by an $L^{p,\infty}$ (quasi-)norm.

For all $\varrho\ge 1$ and any   increasing sequence  $(t_{i})_{0}^N$ as above,
it is plain that
\begin{equation}\label{eq:vrho_orho}
\mathcal O^\varrho_{ \{t_i\}, N}   (\mathcal H_t  f (x): \,t\in \mathcal I)
 \le    \|\mathcal H_t \,f(x)\|_{v (\varrho), \mathcal I}.
\end{equation}
  On the other hand, oscillation is known to be incomparable with jumps; see \cite{Mirek5}.

By means of \eqref{eq:vrho_orho}, it is straightforward to deduce from items (1), (2), (3) in Subsection \ref{subs:var} 
the first two of   
the following oscillation inequalities for a general Ornstein--Uhlenbeck semigroup:
\begin{enumerate}

 \item[(1'')]
 For $\varrho>2$ and for all  $1<p<\infty$ 
\[\sup_{N\in \N_+} \sup_{ \{t_i \} }
\|\mathcal O^\varrho_{ \{t_i\}, N}   (\mathcal H_t  f(x):\,t\in \R_+)\|_{L^p(\gamma_\infty)}\lesssim \|f\|_{L^p(\gamma_\infty)}.\]
 \item[(2'')]
For $\varrho>2$   also 
\[\sup_{N\in \N_+} \sup_{ \{t_i \} }
\|\mathcal O^\varrho_{ \{t_i\}, N}   (\mathcal H_t  f(x):\,t\in \R_+)\|_{L^{1,\infty}(\gamma_\infty)}\lesssim \|f\|_{L^1(\gamma_\infty)}.\]
\item[(3'')]
For   $\varrho = 2$ and  $1<p<\infty$
\[\sup_{N\in \N_+} \sup_{ \{t_i \} }
\|\mathcal O^2_{ \{t_i\}, N}   (\mathcal H_t  f(x):\,t\in \R_+)\|_{L^p(\gamma_\infty)}\lesssim \|f\|_{L^p(\gamma_\infty)}.\]
 This follows from  \cite[p.2092]{Le Merdy}.  Notice that the notion of oscillation introduced by Le Merdy and Xu is slightly different but equivalent to ours.
\end{enumerate}

\medskip

\begin{remark}
 In the light of a counterexample given in a discrete setting  in \cite[Theorem 1.2]{Mirek5},
  when $1\le p<\infty$ and $1<\varrho\le \varrho'<\infty$, we can expect 
  neither
 \begin{equation*}
\|\,   \|\mathcal H_t \,f(x)\|_{v(\varrho'), \R_+}\|_{L^{p,\infty}}\lesssim_{\varrho,\varrho'} 
\sup_{N\in \N_+} \sup_{ \{t_i \} }
\|  \mathcal O^\varrho_{ \{t_i\}, \infty}   (\mathcal H_t  f:\,t\in\R_+)\|_{L^p(\gamma_\infty)},
\end{equation*}
nor
\begin{equation*}
 J_{\varrho'}^{p,\infty}  (\mathcal H_t  f:\, t\in\R_+)\lesssim_{\varrho,\varrho'}  
 \sup_{N\in \N_+} \sup_{ \{t_i \} }\|  \mathcal O^\varrho_{ \{t_i\}, \infty}   (\mathcal H_t  f:\,t\in\R_+)\|_{L^p(\gamma_\infty)}.
\end{equation*}
It  seems thus that neither the variational nor the jumps can be controlled by the oscillation.
\end{remark}

Regarding oscillation inequalities, 
what is missing to complete the picture is the weak type $(1,1)$
for $\mathcal H_t$ when $\varrho=2$.
The second main result in this paper is exactly
this.      
\begin{theorem}\label{thm:main2}
The following second-order uniform  oscillation inequality holds
for the Ornstein--Uhlenbeck semigroup:
\[\sup_{N\in \N_+} \sup_{ \{t_i \} }
\|\mathcal O^2_{ \{t_i\}, N}   (\mathcal H_t  f(x):\,t\in\R_+)\|_{L^{1,\infty}(\gamma_\infty)}\lesssim \|f\|_{L^1(\gamma_\infty)}.\]
 \end{theorem}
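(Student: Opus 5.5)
We follow the decomposition announced in the introduction. Write $\mathcal H_t = \mathcal H_t^{\mathrm{loc}} + \mathcal H_t^{\mathrm{glob}}$, where the local part has kernel restricted to the local region $|x-u|\lesssim \min(1,1/|x|)$ (via a smooth cutoff). Then split the time axis into $t\in(0,1]$ and $t\ge 1$.

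The oscillation seminorm is subadditive in the function and, by \eqref{eq:vrho_orho}, dominated by the $\varrho$-variation. Splitting a sequence $(t_i)$ at $t=1$ costs at most one extra term. That term, $\sup_{t_{i-1}\le t\le t_i}|\mathcal H_t f-\mathcal H_{t_i}f|$, is controlled by twice the maximal function $\sup_t|\mathcal H_t f|$, which is of weak type $(1,1)$ for $\gamma_\infty$ by known results. It therefore suffices to bound three pieces. First, $\|\mathcal H_t f\|_{v(2),[1,\infty)}$, which is of weak type $(1,1)$ (even $v(1)$ suffices) by the large-time analysis of Section \ref{s:strong_type_large_times}. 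Second, $\|\mathcal H^{\mathrm{glob}}_t f\|_{v(1),(0,1]}$, which is bounded by $\int_0^1|\partial_t \mathcal H^{\mathrm{glob}}_t f|\,dt$. The time-derivative kernel bounds of Section \ref{s:prelim}, integrated in $t$, give a kernel that is $L^1(\gamma_\infty)$-integrable in the global region, and known global-region arguments (as in \cite{CCS8}) give weak type $(1,1)$. Third, the oscillation of $\mathcal H^{\mathrm{loc}}_t f$ for $t\in(0,1]$, which is the genuinely new part.

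For the local small-time part, we use that in the local region $\gamma_\infty$ is comparable to Lebesgue measure on each ball $B(a,c\min(1,1/|a|))$. By the standard covering of $\R^n$ by such balls with bounded overlap, it suffices to prove a weak type $(1,1)$ bound with respect to $dx$ for the localized operator. On the local region and for $t\le 1$, the Mehler kernel is compared with a Euclidean heat-type kernel, for instance $K_t(x,u)\approx (\det Q_t)^{-1/2}\exp(-\tfrac12\langle Q_t^{-1}(x-u),x-u\rangle)$, i.e. convolution with a Gaussian of covariance $Q_t$. We write $\mathcal H^{\mathrm{loc}}_t = M_t + D_t$, where $M_t$ is this main (convolution-type, localized) operator and $D_t$ is a difference operator. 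For $D_t$, we plan to show $\int_0^1|\partial_t D_t(x,u)|\,dt\lesssim$ a kernel bounded by $|x-u|^{-n}\cdot$(a gain such as $(1+|x|)|x-u|$, or $|x-u|^{\delta}$), integrable over the local region uniformly in $x$. This gives $v(1)$ bounds of strong type $(1,1)$ for $D_t$, hence oscillation bounds.

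For the main part $M_t$, the oscillation of order 2 for a convolution semigroup of Gaussians with covariance $Q_t$ on $\R^n$ should fall under H.~Liu's result \cite{Liu}, which gives weak type $(1,1)$ oscillation inequalities for convolution averages via a Calder\'on--Zygmund argument applied to the vector-valued (oscillation-space) operator. We view $Q_t\simeq tQ$ for small $t$ and perhaps replace $M_t$ by the genuinely isotropic-in-$Q$ heat semigroup $e^{t\,\mathrm{tr}(Q\nabla^2)/2}$ plus a further difference term handled as above. The localization cutoff then needs to be removed or reinserted using the fact that the error kernels are supported near the boundary of the local region and are controlled by local maximal functions, which are of weak type $(1,1)$.

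The main obstacle is this last step: verifying that the difference kernels between the true local Mehler kernel (with $e^{tB}x$ drift and $Q_t$ vs $tQ$) and the model kernel have time-integrated derivatives that are integrable uniformly, particularly the drift term $e^{tB}x-x\approx tBx$, which is large when $|x|$ is large. We would handle it by exploiting that in the local region $t|x|\lesssim \sqrt t$ is needed for non-negligible kernel mass. Alternatively, we would absorb the drift by a change of variables that is close to the identity at scale $\min(1,1/|x|)$.
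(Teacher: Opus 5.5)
Your outline has the same architecture as the paper's proof. Subadditivity of $\mathcal O^2$ and \eqref{eq:vrho_orho} reduce matters to three pieces: large times; the global part for small times (weak type $(1,1)$ of the $v(1)$-variation, \cite[Theorem 6.1]{CCS8}, as you say); and the local part for small times. The local part is treated piece by piece on $\widetilde r_j(x)\,\mathcal H_t(fr_j)(x)$, by splitting off difference operators with strong-type variation and applying Liu's theorem \cite{Liu} to a Gaussian convolution of covariance $tQ$. Your use of the maximal operator for the single interval straddling $t=1$ is fine; the paper leaves that step implicit.

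One justification is wrong, though. The weak type $(1,1)$ of $\|\mathcal H_tf\|_{v(1),[1,\infty)}$ does not follow from Section~\ref{s:strong_type_large_times}. That argument rests on hypercontractivity, which gives nothing at $p=1$: by Proposition~\ref{p:hyper}\,$(ii)$, $\mathcal H_t$ maps $L^1(\gamma_\infty)$ into no $L^q(\gamma_\infty)$ with $q>1$. Moreover, for $f\in L^1(\gamma_\infty)$ the function $|u|^2|f(u)|$ need not even be integrable. The paper takes this piece from the separate result \cite[Theorem 4.3]{CCS8}.

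What you call the main obstacle is resolved in \cite{CCS9}, which the paper quotes, by splitting the local time range at $t=\min(1,|x_j|^{-2})$ rather than comparing with the heat kernel on all of $(0,1]$. For $t>|x_j|^{-2}$ the mass of the Mehler kernel sits near $D_tx$ (with $D_t$ the matrix in \eqref{def:Dtx}, not your difference operator), at distance $\simeq t|x|\gtrsim\sqrt t$ from $x$. So the difference with the model kernel has no smallness to exploit. Instead, the whole piece $\widetilde r_j\mathcal H_t(fr_j)$ already has strong-type variation on that range \cite[Proposition 6.1]{CCS9}. Indeed, \eqref{dotKeps} and the localization $|x-u|\lesssim A/(1+|x_j|)$ make the $t$-integral of $e^{-R(u)}|\dot K_t(x,u)|$ over that range $O(|x_j|^n)$, on a set of Lebesgue measure $O(|x_j|^{-n})$.

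Only for $t\le\min(1,|x_j|^{-2})$, where $t|x|\lesssim\sqrt t$ as you anticipated, are the difference operators introduced: $\Delta_t^{(1)}$ ($D_tx\to x$), $\Delta_t^{(2)}$ ($Q_t^{-1}-Q_\infty^{-1}\to(tQ)^{-1}$) and $\Delta_t^{(3)}$ (normalization). Their strong-type variation comes from \cite[Propositions 6.1 and 8.4]{CCS9}.

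Finally, no cutoff error arises in the main term. With the product localization \eqref{Hloc}, the main term is exactly $\widetilde r_j(x)e^{R(x)}\,\psi_t*(fr_je^{-R})(x)$, and the $t$-independent factor $\widetilde r_j(x)e^{R(x)}$ passes through the oscillation. Your fallback of controlling cutoff errors by local maximal functions would not work anyway. $\mathcal O^2$ sums $N$ suprema, while a maximal bound controls only one of them, so such errors would need a $v(1)$ or $v(2)$ bound.
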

As shown by the authors in \cite{CCS8, CCS9},
some parts of the variation operator 
$ f \mapsto \| \mathcal H_t f\|_{v(\varrho),\Bbb R_+}$
are
of weak type $(1,1)$ not only for $\varrho>2$,  but even  
for $\varrho \ge 1$.  Then  \eqref{eq:vrho_orho} implies that the corresponding parts of 
the oscillation associated with $ \mathcal H_t f$ 
 is of weak type $(1,1)$ as well. Based on these considerations, we shall see that,
 in order to prove Theorem \ref{thm:main2}, it is enough to consider the oscillation of the local part of $\mathcal H_t f$ when $t$ is small.

%%%PRELIMINARIES

\section{Preliminaries} \label{s:prelim}
As in \cite{CCS2}, the symbol
$R(x)$ will denote
the quadratic form 
\begin{equation*}
R(x) ={\frac12 \left\langle Q_\infty^{-1}x ,x  \right\rangle}, \qquad\text{$x\in\R^n$.}
\end{equation*}
We let
\[
|x|_Q = | Q_\infty^{-1/2}x|, 
\]
so  that $R(x) = |x|_Q^2/2$.
Notice that  $|x|_Q$
 is a norm on $\R^n$ and that  $|x|_Q \simeq |x|$.

The invariant measure is given by $d\gamma_\infty(x) = (2\pi)^{-\frac{n}{2}}
(\det \, Q_\infty)^{-\frac{1}{2} }e^{-R(x)} \,dx$.
Further, $D_t$ will be  a one-parameter group  of matrices, defined by
\begin{equation}\label{def:Dtx}
D_t =
 Q_\infty
 e^{-tB^*} Q_\infty^{-1} , \qquad t\in\R.
\end{equation}
We  recall from \cite[Lemma 2.3]{CCS4}
that for $|t|\le 1$ and  $ x \in \R^n$ one has
 \begin{equation}\label{dtx}  |x-D_t \,x|\simeq |t|\,|x|.
       \end{equation}
  Further, 
\begin{equation}\label{3.X}
  |D_{-t}\,x| \lesssim e^{-ct}|x|, \qquad t>0,\;\; x \in \R^n;
\end{equation}
see [9, lemma 2.1].       
 We shall also need 
\cite[formula (4.3)]{CCS2}, that is,
\begin{align}
&\frac{\partial}{\partial s}
R\big( D_s \, x \big)
 =\frac12\,
\big|
Q^{1/2}   Q_\infty^{-1}   D_s\, x\big|^2
\simeq
\big| 
D_s \, x\big|^2,
 \label{vel-4}
\end{align}
for all $x$
in $\R^n$ and $s\in\R$.

Starting from  \eqref{Kolmo},
one immediately obtains
\begin{align*}
\mathcal H_t
f(x)
&=
(\det \, Q_\infty)^{\frac{1}{2} }
(\det \, Q_t)^{-\frac{1}{2} }\\
&\qquad\times \int  
f(u)
\exp\left[
{-\frac12 \langle Q_t^{-1} ( e^{tB}x-u), e^{tB}x-u\rangle}\right] e^{R(u)}
d\gamma_\infty (u),
\end{align*}
and after  some computations also
\begin{align*}
\mathcal H_t
f(x)
&
=
(\det \, Q_\infty)^{\frac{1}{2} } (\det \, Q_t)^{-\frac{1}{2} }  \, e^{R(x)}\\
&\int  
f(u)
\exp\Bigg[
%{-\frac12 \langle Q_t^{-1}  e^{tB}x, e^{tB}x\rangle}
{\frac12 \langle (Q_\infty^{-1}-Q_t^{-1}) (u-D_t\, x), (u-D_t\, x)\rangle}
\Bigg]
d\gamma_\infty (u),
\end{align*}
(see \cite[Section 2]{CCS2}).
Thus,  the Mehler  kernel   $K_t$
may be expressed as 
\begin{align}\label{def:Mehler1}
K_t (x,u)
&=
\Big(
\frac{\det \, Q_\infty}{\det \, Q_t}
\Big)^{{1}/{2} }  \exp
{
\big( R(u)\big)}
\exp\left[
{-\frac12 \langle Q_t^{-1} ( e^{tB}x-u), e^{tB}x-u\rangle}\right],\;
\,
\end{align}
or  as
\begin{align}\label{def:Mehler2}
K_t (x,u)
&=
\Big(
\frac{\det \, Q_\infty}{\det \, Q_t}
\Big)^{{1}/{2} }
\exp
{
\big( R(x)\big)}
\notag\\
&\qquad\qquad\qquad
\times\exp \Big[
{-\frac12 
\left\langle (
Q_t^{-1}-Q_\infty^{-1}) (u-D_t\, x) \,,\, u-D_t\, x\right\rangle}\Big]
\,
\end{align}
 for $x,u\in\R^n$.  From
\eqref{def:Mehler2} it follows  that, for  $0<t\leq 1$ and all $x,u\in  \R^n$,
\begin{equation}\label{litet}
   K_t(x,u)
\,\lesssim \, \frac{ e^{R( x)}}{t^{n/2}} \exp\left[-c\,\frac{|u-D_t\, x |^2}t\right],
\end{equation}
see  \cite[(3.4)]{CCS2}, 
and from \eqref{def:Mehler1} similarly
\begin{equation}\label{litetJ}
  K_t(x,u)
\,\lesssim \, \frac{ e^{R( u)}}{t^{n/2}} \exp\left[-c\,\frac{|e^{tB}x-u |^2}t\right].
\end{equation}

\subsection{Derivative of the Mehler kernel}
We shall also need some information on the $t$ derivative $\dot K_t$  of $K_t$.
For all $(x,u) \in \mathbb R^n\times \mathbb R^n$ and
$t>0$, we proved in \cite[Lemmas 4.2 and 4.3]{CCS5} that
\begin{align}\label{def:Kt_deriv}
\dot K_t(x,u) = K_t(x,u) \, N_t(x,u),
\end{align}
where the function $N_t$ is given by
\begin{align}\label{R}
\notag
N_t (x,u)&=-\frac12\,{\tr
\big(Q_t^{-1} \, e^{tB}\, Q\, e^{tB^*}\big)}
+\frac12\,
\left| Q^{1/2}\, e^{tB^*}\, Q_t^{-1}\,(u-D_t\, x)
\right|^2
\notag\\
&\qquad\qquad\qquad
-
\left\langle Q_\infty \,B^*\, Q_\infty^{-1}\, D_t\, x\,,\,
(Q_t^{-1}-Q_\infty^{-1})\,(u-D_t\, x)\right\rangle,
\end{align}
and therefore satisfies
\begin{align}\label{R1}
|N_t (x,u)|
\lesssim
\frac{1}{t}
+\frac{\left|u-D_t \,x\right|^2}{t^2}
+ |x|\,\frac{|u-D_t\, x|}t
\end{align}
  for $0< t\leq1$.
  Then, by slightly decreasing the positive constant $c$ 
  in the exponential factor $\exp(-c|u-  D_{t}\,x|^2/t)$ of $K_t$ in \eqref{litet}, 
  we can absorb all powers of $|u-D_{t}\,x|^2/t$ appearing in $N_t (x,u)$,
so that
  \begin{equation}\label{dotKeps}
|\dot K_t (x,u)| \lesssim e^{R(x)}\,t^{-n/2}\,\exp\left(-c\,\frac{|u-  D_{t}\,x|^2}t \right)\,
\left(\frac{|x|}{\sqrt t}+\frac1{t}\right), \qquad 0<t\leq 1.
\end{equation}
Analogously,\eqref{litetJ} implies that     
  \begin{equation}\label{dotKeps1}
|\dot K_t (x,u)| \lesssim e^{R(u)}\,t^{-n/2}\,\exp\left(-c\,\frac{|u-  e^{tB}x|^2}t \right)\,
\left(\frac{|x|}{\sqrt t}+\frac1{t}\right), \qquad 0<t\leq 1.
\end{equation}

Moreover, we also have
\begin{align}\label{P1}
|N_t (x,u)|
\lesssim
|D_{-t}\,u- x|\,|D_{-t}\,u|+
e^{-ct}\,
|D_{-t}\,u- x|^2 +e^{-ct}.           
\end{align}
  for $t\ge 1$.

\subsection{Local and global parts of the operators}\label{subs:loc_gl}

  We modify slightly the localization procedure described in \cite[Section 7]{CCS5}, mainly by multiplying the radii of the balls by a large factor $A$.

Let us consider closed balls 
$B_j = B(x_j, A/(1+|x_j|))$, for $j=0,1,\dots,$ 
and some large $A = A(n,Q,B) > 4$ determined later. The interiors of the $B_j$
shall be pairwise disjoint, and the family  $(B_j)_0^\infty$ is maximal with respect to this property.
The scaled balls $3B_j = B(x_j, 3A/(1+|x_j|))$ then cover  $\mathbb R^n$; see \cite[Section 7]{CCS5}. 
The sequence starts with $B_0 = B(0, A)$, which implies that $|x_j|>A$ for $j \ge 1$.

 Then we define  smooth, nonnegative functions $r_j,\;j=0,1,\dots,$ such that 
  $\sum_0^\infty r_j = 1$ in  $\mathbb R^n$, and with  $r_j$ supported in $4B_j$. We further make $r_0 = 1$ in $\frac12B_0$, 
  so that the  $r_j$ with $j>0$ vanish in $\frac12B_0$. 
   The larger  functions 
  $\widetilde r_j,\;j=0,1,\dots,$ will also have values in $[0,1]$, and  $\widetilde r_j$ is supported in  $6B_j$ and equals 1 in $5B_j$.
 For the gradients, one has
 \begin{equation}\label{nabla2}   |\nabla  r_j(x)| ,\;\;
\big|\nabla \widetilde r_j (x)\big|\lesssim 1+|x|.
\end{equation}
  The density of $\gamma_\infty$ is essentially constant in each $6B_j$, since 
  \begin{equation}\label{density} 
  e^{R(x)} \simeq e^{R(x_j)}, \qquad x \in  6B_j,
\end{equation}
uniformly in $j$, as easily verified. The balls  $6B_j$ have bounded overlap, see  \cite[Section 7]{CCS5}. 
 
We then define the localization function as
\begin{align*}
\eta(x,u) = \sum_{j=0}^{\infty} \widetilde r_j (x) \, r_j(u), \qquad x, u \in \R^n.
\end{align*}
This sum is locally finite, and $0 \le \eta(x,u) \le 1$.

The local part of the semigroup 
is now defined for each $t> 0$ as
\begin{align}\label{Hloc} 
\mathcal H_t^{\mathrm{loc}} f(x) 
= \int_{\R^n} K_t(x,u) \, \eta(x,u) \, f(u) \, d\gamma_\infty(u) 
= \sum_{0}^{\infty} \widetilde r_j (x) \, \mathcal H_t (fr_j)(x). 
\end{align} 
The global part is
 $$
 \mathcal H_t^{\mathrm{glob}} = \mathcal H_t - \mathcal H_t^{\mathrm{loc}},
 $$ 
with  kernel $K_t(x,u)(1-\eta(x,u))$.

 \bigskip
                           
 \begin{lemma}\label{lemma3.1}
   If $\eta(x,u) < 1$, then $|x-u| \ge \frac{A}{4(1+|x|)}$.
 \end{lemma}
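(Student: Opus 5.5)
We prove the contrapositive: if $|x-u| < \frac{A}{4(1+|x|)}$, then $\eta(x,u)=1$. Since $\sum_j r_j(u) = 1$ and $0\le \widetilde r_j\le 1$, we have
\[
\eta(x,u) = \sum_j \widetilde r_j(x)\, r_j(u) \le \sum_j r_j(u) = 1 .
\]
It therefore suffices to show that $\widetilde r_j(x) = 1$ for every $j$ with $r_j(u)\neq 0$. Since $\widetilde r_j = 1$ on $5B_j$, this amounts to proving that $u\in 4B_j$ implies $x\in 5B_j$.

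So fix $j$ with $u \in 4B_j$, that is, $|u-x_j| \le 4A/(1+|x_j|)$. By the triangle inequality,
\[
|x - x_j| \le \frac{4A}{1+|x_j|} + \frac{A}{4(1+|x|)} .
\]
We need the right-hand side to be at most $5A/(1+|x_j|)$. Hence the key step is the comparison
\[
\frac{1}{4(1+|x|)} \le \frac{1}{1+|x_j|}, \qquad\text{i.e.}\qquad 1+|x_j| \le 4(1+|x|).
\]

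This comparison is the only real obstacle, because $A$ is large and the ball radii are not uniformly small. We handle it by cases.

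\emph{Case $j=0$.} Here $x_0=0$, so the required inequality $1 \le 4(1+|x|)$ is trivial.

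\emph{Case $j\ge 1$.} Here $|x_j|>A$, so the radius of $4B_j$ is $4A/(1+|x_j|) < 4$. Hence $|u| \ge |x_j| - 4$.

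\emph{Subcase $|x_j| \le 8$.} Then $1+|x_j| \le 9$. This does not immediately give $1+|x_j|\le 4(1+|x|)$ when $|x|$ is small, but in fact the subcase is empty: $A>4$ and $|x_j|>A$ force $|x_j|>4$, and we may assume $A\ge 8$, since $A$ is chosen large.

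\emph{Subcase $|x_j| > 8$.} Then $|u| \ge |x_j|/2$. Moreover, $|x-u| < A/4$, so
\[
|x| \ge |x_j|/2 - A/4 \ge |x_j|/4, \qquad\text{using } |x_j| > A .
\]
Hence $1+|x_j| \le 4(1+|x|)$.

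Collecting the cases gives $x\in 5B_j$ whenever $r_j(u)\ne0$, and therefore $\eta(x,u)=1$.

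The one point to double-check is the lower bound $|x|\gtrsim |x_j|$ when $A$ is only moderately larger than $4$. If the constants turn out to be tight, we absorb this by taking $A$ large, which the construction allows.
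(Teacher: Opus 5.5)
Your proof is correct and follows the paper's route. Like the paper, you take the contrapositive, reduce to showing $u\in 4B_j \Rightarrow x\in 5B_j$ via the triangle inequality, and prove the comparison $1+|x_j|\le 4(1+|x|)$ for $j\ge 1$. The one difference is how you bound $|x|$ from below: you use $|u|\ge |x_j|-4\ge |x_j|/2$, which needs $A\ge 8$; that assumption is legitimate since $A$ is chosen large. The paper instead uses that $r_j$ vanishes on $\frac12 B_0$ for $j\ge 1$, so $|u|>A/2$ and $|x|>A/4>1$. This gives $|x-x_j|\le 5$ and hence $1+|x_j|\le 4(1+|x|)$ for every $A>4$.
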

Thus the support of the kernel of $\mathcal H_t^{\mathrm{glob}} $  is contained in the set
 $$
 G_A := \left\{ (x,u) \in \R^n \times \R^n: |x-u| \ge \frac{A}{4(1+|x|)} \right\}.
  $$

\begin{proof}
Equivalently, we will prove that
\begin{equation}\label{eq:777}
|x-u|<\frac{A}{4(1+|x|)}
\end{equation}
implies $\eta(x,u)=1$. Notice that $\eta(x,u)=1$ follows if $\widetilde{r}_{j}(x)=1$ for any $j$ with $r_{j}(u)>0$.

Starting from \eqref{eq:777}, 
we further assume that $r_{j}(u)>0$ for some $j\in\mathbb{N}$, which implies $u\in4B_{j}$, and verify that $x\in5B_{j}$ so that $\widetilde{r}_{j}(x)=1$. The lemma will then follow.

The triangle inequality, \eqref{eq:777} and the assumption $u\in4B_{j}$ give
\begin{equation} \label{eq:3.9}
|x-x_{j}|\le|x-u|+|u-x_{j}|<\frac{A}{4(1+|x|)}+\frac{4A}{1+|x_{j}|}.
\end{equation}
If $j=0$ so that $x_{j}=0$, \eqref{eq:3.9} implies $|x-x_{j}|<5A=5A/(1+|x_{j}|)$ and we are done.

Assume now that $j>0$; then $|x_{j}|>A$ and $|u|>A/2$,
 the latter since $r_{j}(u)>0$ and $r_{j}$ vanishes in $B(0,A/2)$.
It follows that $|x|\ge|u|-|x-u|>A/2-A/4=A/4,$ and so the right-hand side of \eqref{eq:3.9} is no larger than
\[
\frac{A}{4(1+A/4)}+\frac{4A}{1+A}\le 1+4.
\]
Hence, $1+|x_{j}|\le1+|x|+|x-x_{j}|\le1+|x|+5\le4(1+|x|)$, the last step since $|x|\ge A/4>1$, and thus $(1+|x|)^{-1}\le4(1+|x_{j}|)^{-1}$. From \eqref{eq:3.9} it now follows that
\[
|x-x_{j}|\le\frac{A}{1+|x_{j}|}+\frac{4A}{1+|x_{j}|}=\frac{5A}{1+|x_{j}|};
\]
which means $x\in5B_{j}$. The lemma is proved.
\end{proof}

 For future convenience, we also note that
 for any $\varrho \in [1,\infty)$, any interval $\mathcal I \subset \R_+$ and all $x\in\R^n$
 \begin{align}  \label{important}
 \|\mathcal H_tf(x)\|_{v(\varrho), I}&\le      
     \int_{\mathcal I} \left|\frac{\partial}{\partial t} \int K_t(x,u) f(u)\,d\gamma_\infty (u)  \right|\,dt\notag
 = \int_{\mathcal I} \left| \int \dot K_t(x,u) f(u) \,d\gamma_\infty (u) \right|\,dt     \notag  \\
&\le \int          \int_{\mathcal I} \big|  \dot K_t(x,u)\big| \,dt \, |f(u)|\,  d\gamma_\infty (u).
  \end{align}
The second step here is verified in \cite[formula (3.4)]{CCS8}.

\subsection{Hypercontractivity properties of $(\mathcal H_t)_{t>0}$}  
~
\begin{proposition}\label{p:hyper}
Let $1\le p<q<\infty$.
\begin{itemize}
    \item[$(i)$] If 
    \[
    q -1 \leq (p - 1) \|Q_\infty^{-1/2} e^{tB} Q_\infty^{1/2} \|^{- 2},
    \]
    then $\mathcal H_t$ is a contraction from $L^p(\gamma_\infty)$ into $L^q (\gamma_\infty)$;
    
    \item[$(ii)$] if 
    \[
    q - 1 > (p - 1) \|Q_\infty^{-1/2} e^{tB} Q_\infty^{1/2} \|^{- 2},
    \]
    then $\mathcal H_t$ is not bounded from $L^p (\gamma_\infty)$ into $L^q (\gamma_\infty)$.
\end{itemize}
\end{proposition}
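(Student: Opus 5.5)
This is the classical Nelson hypercontractivity theorem in its nonsymmetric form, so the plan is to reduce to a standard Gaussian setting. First we change variables $x=Q_\infty^{1/2}y$. The measure $\gamma_\infty$ becomes the standard Gaussian $\gamma$ on $\R^n$. The Kolmogorov formula \eqref{Kolmo} shows that $\mathcal H_t f(x)=\mathbb E f(e^{tB}x+Z_t)$ with $Z_t\sim N(0,Q_t)$. Invariance of $\gamma_\infty$ gives $Q_\infty=e^{tB}Q_\infty e^{tB^*}+Q_t$. In the new variables, with $g(y)=f(Q_\infty^{1/2}y)$ and $M_t=Q_\infty^{-1/2}e^{tB}Q_\infty^{1/2}$, one gets
\[
\mathcal H_t f(Q_\infty^{1/2}y)=\int g\big(M_t y+(I-M_tM_t^*)^{1/2}z\big)\,d\gamma(z).
\]
This is the Mehler-type (Gaussian noise) operator $\Gamma(M_t^*)$ associated with the contraction $M_t$; note that $\|M_t\|<1$ for $t>0$ by the identity above.

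For $(i)$, we write $M_t=U|M_t|$ via the polar/singular value decomposition, $M_t=V\,\mathrm{diag}(\sigma_k)\,W$ with orthogonal $V,W$. Orthogonal maps preserve $\gamma$, so $\Gamma(M_t^*)$ factors as the composition of an isometric rotation on $L^q(\gamma)$, a tensor product of one-dimensional Ornstein--Uhlenbeck operators $P_{s_k}$ with $e^{-s_k}=\sigma_k$, and another rotation. Nelson's one-dimensional theorem says $P_s:L^p\to L^q$ is a contraction iff $q-1\le (p-1)e^{2s}$. The tensorization of hypercontractivity (Minkowski's integral inequality applied coordinatewise, valid since $q\ge p$) then shows that the product is a contraction as soon as each factor is. Each factor is dominated by the worst one, since $\sigma_k\le\|M_t\|$ and $P_s$ with larger $s$ is $P_{s'}$ composed with a Markov contraction. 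This yields the condition $q-1\le (p-1)\|M_t\|^{-2}$. The case $p=1$ is trivial: the condition forces $q\le1$, which is excluded, so the statement is vacuous there.

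For $(ii)$, we test on exponentials $g(y)=e^{\langle a,y\rangle}$. A direct Gaussian computation gives $\Gamma(M_t^*)g(y)=e^{\langle M_t^*a,y\rangle+\frac12(|a|^2-|M_t^*a|^2)}$ and $\|e^{\langle b,\cdot\rangle}\|_{L^r(\gamma)}=e^{r|b|^2/2}$. The ratio $\|\mathcal H_tf\|_q/\|f\|_p$ therefore equals
\[
\exp\Big(\tfrac{q-1}{2}|M_t^*a|^2-\tfrac{p-1}{2}|a|^2\Big).
\]
We choose $a=\lambda v$ with $v$ a unit vector where $|M_t^*v|=\|M_t\|$. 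The exponent is then $\frac{\lambda^2}{2}\big((q-1)\|M_t\|^2-(p-1)\big)$, which is positive under the hypothesis of $(ii)$ and tends to $\infty$ as $\lambda\to\infty$, so $\mathcal H_t$ is unbounded.

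The main obstacle is the tensorization and reduction in $(i)$: verifying carefully that the nonsymmetric operator really equals rotation $\circ$ diagonal OU $\circ$ rotation. This requires checking that the noise covariance $I-M_tM_t^*$ matches the diagonal structure, which follows from $V(I-\Sigma^2)V^*=I-M_tM_t^*$. One could alternatively cite the known result of Fuhrman or Chojnowska-Michalik--Goldys, which states exactly this criterion.
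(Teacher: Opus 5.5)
Your proof is correct. It differs from the paper chiefly in that the paper gives no proof here: it cites Chojnowska-Michalik--Goldys \cite{CM} and Fuhrman \cite{MF}. There the result is obtained in Hilbert space by identifying $\mathcal H_t$ with the second quantization of $S_0(t)^*$ and invoking Nelson's hypercontractivity theorem for second-quantized contractions.

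Your argument is a self-contained finite-dimensional version of that route:
\begin{itemize}
\item The substitution $x=Q_\infty^{1/2}y$, together with $Q_\infty=e^{tB}Q_\infty e^{tB^*}+Q_t$, exhibits the operator as a Mehler operator with drift $M_t=S_0(t)$ and noise covariance $I-M_tM_t^*$.
\item The singular value decomposition and rotation invariance of $\gamma$ reduce it to a tensor product of one-dimensional Ornstein--Uhlenbeck operators with $e^{-s_k}=\sigma_k\le\|M_t\|$.
\item Only Nelson's one-dimensional theorem and the Minkowski tensorization lemma (valid because $q\ge p$) are then needed for $(i)$.
\item For $(ii)$, exponentials $e^{\lambda\langle v,\cdot\rangle}$ along a top singular direction of $M_t^*$ give sharpness, and this covers $p=1$ as well.
\end{itemize}

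What your route buys is transparency: it shows exactly why the largest singular value $\|S_0(t)\|$ governs the threshold. It also yields $\|S_0(t)\|<1$ for free from $I-M_tM_t^*=Q_\infty^{-1/2}Q_tQ_\infty^{-1/2}>0$, which the paper imports separately from \cite{MF} as \eqref{in:MF}.

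What the cited approach buys is validity in infinite dimensions. There the diagonalization-plus-finite-tensorization step is not directly available, since $|T|$ need not have pure point spectrum. One therefore needs Nelson's theorem for $\Gamma(T)$ with an arbitrary contraction $T$.
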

For a proof of this result,  holding also in the infinite-dimensional setting, we refer to  \cite{CM} and  \cite{MF}.

Following the notation in \cite{CM}, we denote by $S_0(t) $ the semigroup
$$
S_0(t)= Q_\infty^{-1/2} e^{tB} Q_\infty^{1/2}, \qquad t>0.
$$
It has been proved in 
 \cite[Propositions 4.2 and  4.1]{MF} that 
\begin{equation}\label{in:MF}
  \|S_0(t) \| <1\qquad \text{ for all $t>0$,}
\end{equation}
and clearly
\begin{equation}\label{in:MF3}
\lim_{t\to +\infty}  \|S_0(t) \| =0.
\end{equation}
In other words, the semigroup $(S_0(t))_{t>0}$ is uniformly stable.
This implies 
that
\[\|S_0(t) \|\lesssim e^{-at}, \quad t>0.\]
We also notice that $\|S_0(t)\|$ is decreasing on $(0,+\infty)$, as a consequence of  the semigroup property and \eqref{in:MF}.
Thus, in particular,
\begin{equation}\label{in:tlargerthan1}
 \|S_0(t)\|\le \|S_0(1)\|<1\qquad\text{for all $t\ge 1$.}
\end{equation}

\section{The strong type $(p,p)$ of jumps for large times}\label{s:strong_type_large_times}
We start by proving that the variation  operator of order $\varrho\ge 1$ associated with  $\left(\mathcal H_t\right)_{t\ge 1}$ 
is bounded on $L^p(\gamma_\infty)$ for all $1<p<\infty$.  Then \eqref{domination} and \eqref{eq:vrho_orho} will imply that 
 the jump  operator   and the oscillation  
of order $\varrho\ge 1$ 
associated with $\left(\mathcal H_t\right)_{t\ge 1}$ have the same boundedness properties.
 \begin{proposition}\label{pr:strongpp_tlarge}
Let  $\varrho \ge 1$.
Then
\begin{equation}\label{(p,p):operator}
 \norm{ \| \mathcal H_t f(x)\|_{v(\varrho), [1,\infty)}}_{L^p(\gamma_\infty)}
 \lesssim \|f\|_{L^p(\gamma_\infty)}
 \end{equation}
for all $f\in L^p(\gamma_\infty)$, $1<p<\infty$.
\end{proposition}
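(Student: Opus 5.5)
By \eqref{important}, for every $\varrho\ge1$ and every $x$,
\[
\|\mathcal H_tf(x)\|_{v(\varrho),[1,\infty)}\le \int_1^\infty |\partial_t\mathcal H_tf(x)|\,dt ,
\]
so it suffices to treat the case $\varrho=1$. By Minkowski's integral inequality, it is then enough to prove the pointwise-in-$t$ decay estimate
\begin{equation*}
\|\partial_t \mathcal H_t f\|_{L^p(\gamma_\infty)}\lesssim e^{-ct}\|f\|_{L^p(\gamma_\infty)},\qquad t\ge1,
\end{equation*}
and integrate it in $t$ over $[1,\infty)$. My plan is to obtain this estimate directly from the kernel formula \eqref{def:Kt_deriv}, $\partial_t\mathcal H_tf(x)=\int K_t(x,u)N_t(x,u)f(u)\,d\gamma_\infty(u)$, combined with the large-time bound \eqref{P1} and hypercontractivity.

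\emph{Step 1: bounding $N_t$.} For $t\ge1$, \eqref{3.X} gives $|D_{-t}u|\lesssim e^{-ct}|u|$, and trivially $|D_{-t}u-x|\lesssim |x|+|u|$. Inserting these into \eqref{P1} yields
\[
|N_t(x,u)|\lesssim e^{-ct}(1+|x|+|u|)^2 .
\]

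\emph{Step 2: H\"older against the probability measure $K_t(x,\cdot)\,d\gamma_\infty$.} Since $\mathcal H_t1=1$, the measure $K_t(x,u)\,d\gamma_\infty(u)$ is a probability measure for each fixed $x$. Fix $1<p_0<p$. H\"older's inequality with exponents $p_0'$ and $p_0$ gives
\[
|\partial_t\mathcal H_tf(x)|\le \big(\mathcal H_t[|N_t(x,\cdot)|^{p_0'}](x)\big)^{1/p_0'}\,\big(\mathcal H_t|f|^{p_0}(x)\big)^{1/p_0}.
\]
By \eqref{Kolmo}, under this measure $u=e^{tB}x-y$ with $y\sim\gamma_t$. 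The covariances $Q_t\le Q_\infty$ are uniformly bounded, so all moments of $|u|$ are $\lesssim(1+|x|)^{C}$ uniformly in $t\ge1$. Hence the first factor is at most $A(x)$, where
\[
A(x)\lesssim e^{-ct}(1+|x|)^2 .
\]

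\emph{Step 3: taking $L^p(\gamma_\infty)$ norms.} Choose $q$ with $p<q<\infty$ and $q/p_0-1\le (p/p_0-1)\|S_0(1)\|^{-2}$. Such a $q$ exists by \eqref{in:tlargerthan1}. By Proposition \ref{p:hyper}(i) and the monotonicity of $\|S_0(t)\|$, $\mathcal H_t$ then maps $L^{p/p_0}(\gamma_\infty)$ contractively into $L^{q/p_0}(\gamma_\infty)$ for all $t\ge1$. Apply H\"older in $x$ with exponents $q/p$ and $r=q/(q-p)$:
\[
\|\partial_t\mathcal H_tf\|_p\le \|A\|_{L^{pr}(\gamma_\infty)}\,\big\|\mathcal H_t|f|^{p_0}\big\|_{L^{q/p_0}(\gamma_\infty)}^{1/p_0}\lesssim e^{-ct}\,\big\||f|^{p_0}\big\|_{L^{p/p_0}}^{1/p_0}= e^{-ct}\|f\|_p .
\]
Here polynomial weights lie in every $L^{s}(\gamma_\infty)$ because the measure is Gaussian. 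This gives the claimed decay estimate, and hence the proposition.

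\emph{Main obstacle.} The delicate point is Step 2, namely checking uniformly in $t\ge1$ that the moment bound for $N_t$ is only polynomial in $|x|$ and carries the factor $e^{-ct}$. This relies on \eqref{P1} having no non-decaying term once $|D_{-t}u|$ is controlled. The use of hypercontractivity is what allows the polynomial weight in $x$ to be absorbed; without it, the crude bound would lose integrability against $\gamma_\infty$.
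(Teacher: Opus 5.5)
Your proof is correct and follows the same strategy as the paper: bound the variation by $\int_1^\infty|\dot K_t|\,dt$, use \eqref{P1} and \eqref{3.X} to obtain $e^{-ct}$ times a quadratic polynomial weight in $(x,u)$, and absorb that weight by hypercontractivity, which is uniform in $t\ge1$ by \eqref{in:tlargerthan1}. The only difference is minor. You first turn the $|u|$-weight into an $|x|$-weight, using H\"older's inequality against the probability measure $K_t(x,\cdot)\,d\gamma_\infty$ together with Gaussian moments read off from \eqref{Kolmo}. This leaves one hypercontractive step $L^{p/p_0}\to L^{q/p_0}$ plus H\"older in $x$. The paper instead splits the weight, treating the $|u|^2|f|$ term via $L^{p-\varepsilon}\to L^{p}$ and the $|x|^2$ term via $L^{p}\to L^{p+\varepsilon}$.
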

\begin{proof}
Fix $1<p<\infty$.
By \eqref{important} and \eqref{P1} we have
 \begin{align}
& \|\mathcal H_tf(x)\|_{v(\varrho), [1,\infty)}
\le \int         \int_1^{\infty} \big|  \dot K_t(x,u)\big| \,dt \, |f(u)|\,  d\gamma_\infty (u)
\notag\\
&\le \int         \int_1^{\infty}  K_t(x,u) \,|N_t (x,u)|\,dt \, |f(u)|\,  d\gamma_\infty (u)\notag\\
&\le \int         \int_1^{\infty}   K_t(x,u) \Big(|D_{-t}\,u- x|\,|D_{-t}\,u|+
e^{-ct}\,
|D_{-t}\,u- x|^2 +e^{-ct} \Big)\,dt \, |f(u)|\,  d\gamma_\infty (u)\notag.
 \end{align}
 We now write $|D_{-t}\,u - x| \le |D_{-t}\, u\,|+ |x|$ and apply \eqref{3.X} 
several times, 
to conclude that the last expression is dominated by
 \begin{align}
& \int         \int_1^{\infty}   K_t(x,u)\Big(e^{-ct}\,|u|^2+ e^{-ct}\, |x|\,|u|+
e^{-ct}\,
| x|^2 +e^{-ct} \Big)\,dt \, |f(u)|\,  d\gamma_\infty (u)\notag\\
&\lesssim \int         \int_1^{\infty}   K_t(x,u)\, e^{-ct}\,dt\,\Big(|u|^2+
| x|^2+1 \Big)\, |f(u)|\,  d\gamma_\infty (u). 
\label{sum_two_ints}
 \end{align}
We first bound the integral involving the square of $|u|$. By Tonelli's theorem one has

 \begin{align}\label{4.Y}
 \int        \int_1^{\infty}  K_t(x,u) \,e^{-ct}\,dt\, |u|^2 \, |f(u)|\,  d\gamma_\infty (u) &  =     \int_1^{\infty}
\,e^{-ct} \int 
   K_t(x,u) \, |u|^2 \, |f(u)|\,  d\gamma_\infty (u) \,dt    \notag\\
               &    =     \int_1^{\infty}
            e^{-ct} \,\mathcal H_t  \, \big( |\cdot |^2 \, |f(\cdot)|\big)(x)    \,dt.    
              \end{align}

 Since $|u|^2$ is in $L^r(\gamma_\infty)$ for any $r<\infty$, Hölder's inequality implies that $|u|^2 \, |f(u)|$ is in  $L^{p-\varepsilon}(\gamma_\infty)$ for any  $\varepsilon \in (0, p-1]$. If we choose  $\varepsilon$ in this interval and so small that  
 \begin{equation}\label{cond_on_p}
 p-1\le (p-1-\varepsilon)\|S_0(1)\|^{-2},
\end{equation}
then also $p-1\le (p-1-\varepsilon)\,\|S_0(t)\|^{-2}$ for all $t\ge 1$ because of \eqref{in:tlargerthan1}, and  Pro\-position~\ref{p:hyper}\,$(i)$ shows that
 $\mathcal H_t$ is a contraction from $L^{p-\varepsilon}(\gamma_\infty)$ to $L^{p}(\gamma_\infty)$. Thus $\mathcal H_t  \, (|u|^2 \, |f(u)|)(x)$ is in 
 $L^{p}(\gamma_\infty)$, uniformly in   $t\ge 1$.  Minkowski's inequality now shows that the three integrals in \eqref{4.Y} are all in $L^{p}(\gamma_\infty)$.
 
 Next, we consider the integral involving the square of $|x|$ in \eqref{sum_two_ints}.
  Tonelli's theorem now yields
 \begin{align}\label{xx}
 \int         \int_1^{\infty}    K_t(x,u) \,e^{-ct}\,dt\,|x|^2 \, |f(u)|\,  d\gamma_\infty (u)  
   =    |x|^2 \,   \int_1^{\infty}
\,e^{-ct} \mathcal H_t  \, \big(  |f(\cdot)|\big)(x)\,   \,dt.
  \end{align}
  We observe that \eqref{cond_on_p} implies that $p-1+\varepsilon\le (p-1)\|S_0(1)\|^{-2}$, and then  $\mathcal H_t$ is a contraction from $L^{p}(\gamma_\infty)$  to $L^{p+\varepsilon}(\gamma_\infty)$, in view of   Proposition~\ref{p:hyper}\,$(i)$. Thus the right-hand integral in \eqref{xx} is in 
  $L^{p+\varepsilon}(\gamma_\infty)$. Applying  Hölder's inequality almost as before, we conclude that both sides of  \eqref{xx} are in 
  $L^{r}(\gamma_\infty)$ for any  $r<p+\varepsilon$, in particular in  $L^{p}(\gamma_\infty)$.
  
  Since the integral involving the term $+1$ in \eqref{sum_two_ints} is trivial to treat, this ends the proof. 
  \end{proof}

\begin{remark}\label{rem:tlarge}
 It is straightforward to check that 
 Proposition \ref{pr:strongpp_tlarge} holds true even when the time parameter $t$ varies in $[c_0, \infty)$ for some small $0<c_0<1$.
\end{remark}
Inequality \eqref{domination}
and Remark \ref{rem:tlarge}
 immediately imply the following result.

\begin{corollary}\label{c:tlarge}
Let $c_0>0$ be fixed, and let
  $1<p<\infty$.
  Then
\begin{equation*}
 J_2^p  (\mathcal H_t f:\,t\ge c_0)
 \lesssim_{c_0} \|f\|_{L^p(\gamma_\infty)}\,,
 \end{equation*}
and for  $\varrho \ge 1$
 \begin{equation}\label{eq1011}\sup_{N\in \N_+} \sup_{ \{t_i \} }\|
\mathcal O^\varrho_{ \{t_i\}, N}   (\mathcal H_t  f:\,t\ge c_0)\|_{L^{p}(\gamma_\infty)}
\lesssim_{c_0}
    \|f\|_{L^p(\gamma_\infty)}.
\end{equation}
\end{corollary}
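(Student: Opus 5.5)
The corollary follows directly from Proposition \ref{pr:strongpp_tlarge}, extended to the interval $[c_0,\infty)$ as in Remark \ref{rem:tlarge}, together with the two pointwise dominations \eqref{domination} and \eqref{eq:vrho_orho}. The plan is therefore to first make Remark \ref{rem:tlarge} precise, and then insert the resulting variational bound into the two comparison inequalities.

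For the first step, I would rerun the proof of Proposition \ref{pr:strongpp_tlarge} with $t\ge c_0$ in place of $t\ge 1$. Two ingredients need checking.

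\emph{The kernel bound.} The estimate \eqref{P1} for $|N_t(x,u)|$ is stated for $t\ge1$. For $t\in[c_0,1]$ the matrices $Q_t$, $Q_t^{-1}$, $e^{tB}$ and $D_{\pm t}$ are uniformly bounded in terms of $c_0$. Hence the explicit formula \eqref{R} gives $|N_t(x,u)|\lesssim_{c_0} 1+|x|^2+|u|^2$, and on this compact time range the factor $e^{-ct}$ is harmless. The integrand is thus again dominated by $K_t(x,u)\,e^{-ct}(1+|x|^2+|u|^2)$, with constants depending on $c_0$.

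\emph{The hypercontractivity input.} Here \eqref{in:tlargerthan1} is replaced by $\|S_0(t)\|\le\|S_0(c_0)\|<1$ for $t\ge c_0$. This follows from monotonicity of $\|S_0(t)\|$ and \eqref{in:MF}. Then one chooses $\varepsilon$ so that \eqref{cond_on_p} holds with $\|S_0(c_0)\|$ in place of $\|S_0(1)\|$. Proposition \ref{p:hyper}\,$(i)$ then gives $\mathcal H_t:L^{p-\varepsilon}\to L^p$ and $\mathcal H_t:L^p\to L^{p+\varepsilon}$, both contractions uniformly in $t\ge c_0$. The Tonelli, Minkowski and Hölder arguments are then unchanged. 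The result is
\[
\big\|\,\|\mathcal H_t f\|_{v(\varrho),[c_0,\infty)}\big\|_{L^p(\gamma_\infty)}\lesssim_{c_0}\|f\|_{L^p(\gamma_\infty)}
\]
for every $\varrho\ge1$.

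For the second step, the jump bound follows by applying \eqref{domination} with $\varrho=2$ and $\mathcal I=[c_0,\infty)$:
\[
J_2^p(\mathcal H_tf:t\ge c_0)\le\big\|\,\|\mathcal H_tf\|_{v(2),[c_0,\infty)}\big\|_{L^p(\gamma_\infty)},
\]
which is controlled by the first step. For the oscillation, \eqref{eq:vrho_orho} gives, for every finite increasing sequence $(t_i)$ in $[c_0,\infty)$ and every $x$, the pointwise bound $\mathcal O^\varrho_{\{t_i\},N}(\mathcal H_tf(x):t\ge c_0)\le\|\mathcal H_tf(x)\|_{v(\varrho),[c_0,\infty)}$. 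Since the right-hand side does not depend on $N$ or on $(t_i)$, taking $L^p$ norms and then the double supremum yields \eqref{eq1011}.

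The only point requiring real care is the first step, specifically verifying that the kernel bound and the hypercontractive gain remain uniform down to $t=c_0$. Because $\|S_0(c_0)\|<1$ strictly and all the matrices involved are bounded on $[c_0,1]$, I do not expect a genuine obstacle there; everything else is a direct substitution into the earlier inequalities.
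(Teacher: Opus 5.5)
Your proposal is correct and follows the paper's route: the paper obtains the corollary directly from Proposition~\ref{pr:strongpp_tlarge} extended to $[c_0,\infty)$ via Remark~\ref{rem:tlarge}, combined with \eqref{domination} for the jumps and \eqref{eq:vrho_orho} for the oscillation. Your verification of that remark is exactly the check the paper leaves as straightforward: you bound $N_t$ on $[c_0,1]$ directly from \eqref{R} using $Q_t\ge Q_{c_0}$, and you replace $\|S_0(1)\|$ by $\|S_0(c_0)\|<1$ in the hypercontractivity step.
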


\section{The strong type $(p,p)$ of jumps for small $t$  in the global region}
 \label{s:smallt_global}

 In this section, we conclude the proof of Theorem \ref{thm:main1}, proving the following result.

\begin{proposition}\label{propo-glob-tsmall} 
For all $\varrho\ge 1$  and for all $1<p<\infty$,
\begin{equation}\label{ineq:strongtype_j} J_{\varrho}^p \big(\mathcal H^{{\mathrm{glob}}}_t f:\,t\in (0,1]\big)\lesssim \|f\|_{L^p(\gamma_\infty)}.
 \end{equation}
\end{proposition}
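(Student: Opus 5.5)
Since $J_\varrho^p$ decreases in $\varrho$, it suffices to treat $\varrho=1$. By \eqref{domination} it is then enough to control the $L^p(\gamma_\infty)$ norm of the first-order variation, i.e.\ to prove
\[
\big\|\,\|\mathcal H^{\mathrm{glob}}_t f\|_{v(1),(0,1]}\big\|_{L^p(\gamma_\infty)}\lesssim \|f\|_{L^p(\gamma_\infty)} .
\]
By \eqref{important} applied to the kernel $K_t(x,u)(1-\eta(x,u))$, whose $t$-derivative is $\dot K_t(x,u)(1-\eta(x,u))$, the variation is pointwise dominated by the positive operator
\[
Tf(x)=\int M(x,u)\,|f(u)|\,d\gamma_\infty(u),\qquad M(x,u)=(1-\eta(x,u))\int_0^1|\dot K_t(x,u)|\,dt .
\]
The task therefore reduces to showing that $T$ is bounded on $L^p(\gamma_\infty)$ for $1<p<\infty$. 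By Lemma~\ref{lemma3.1}, $M$ is supported in $G_A$.

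To exploit this support, I would dyadically decompose $(0,1]$ into $I_k=(2^{-k-1},2^{-k}]$ and estimate $\int_{I_k}|\dot K_t|\,dt$ with \eqref{dotKeps} and \eqref{dotKeps1}. The factor $|x|/\sqrt t+1/t$ costs at most $(1+|x|)2^{k}$, while the interval length contributes $2^{-k}$. In $G_A$ with $t$ small, the Gaussian factor $\exp(-c|u-D_tx|^2/t)$ yields decay. Indeed, by \eqref{dtx}, $|x-D_tx|\simeq t|x|$, so either $|u-D_tx|\gtrsim|x-u|$ (the off-diagonal regime) or $t|x|\gtrsim |x-u|\ge A/(4(1+|x|))$, which forces $t(1+|x|)^2\gtrsim A$. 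The latter is the regime where $D_tx$ has genuinely moved away from $x$.

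This splits $G_A$ into two parts. In the region $t(1+|x|)^2\lesssim 1$, with $A$ chosen large, we have $|u-D_tx|\simeq|u-x|$. Absorbing the polynomial factors into the Gaussian, the time-integrated kernel is then bounded by something like $e^{R(x)}\min(|x-u|^{-n},\ldots)$ restricted to $|x-u|\ge A/(4(1+|x|))$, i.e.\ the standard ``global part'' kernel. I expect to dominate this by the kernels already treated in \cite{CCS8,CCS9}: there the global part of $\sup_t|\mathcal H_t|$, and indeed of $\int|\dot K_t|dt$, was shown to be of weak type $(1,1)$ and bounded on $L^p(\gamma_\infty)$ (Pérez--Sjögren type estimates). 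I would reuse those estimates, recorded as bounds for the integral $\int_0^1|\dot K_t(x,u)|(1-\eta)\,dt$. In the complementary region, $t\gtrsim (1+|x|)^{-2}$, the factor $|x|/\sqrt t+1/t\lesssim (1+|x|)^2$ is harmless. Using the cruder forms \eqref{litet}/\eqref{litetJ} and a change of variables, $\int_{t\gtrsim(1+|x|)^{-2}}$ can then be handled by comparison with $\sup_{t}\mathcal H_t|f|$ times $(1+|x|)^2\cdot$(length), plus a hypercontractivity or Hölder argument of the kind used in Proposition~\ref{pr:strongpp_tlarge} to absorb the polynomial weight.

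The main obstacle is precisely this last region: $|x|$ large and $t$ comparable to $1/|x|^2$ up to $1$, where the weight $|x|$ coming from the derivative is not compensated by the kernel. Here I would try to use the exact expression \eqref{def:Mehler2}. The exponent $\langle (Q_t^{-1}-Q_\infty^{-1})(u-D_tx),u-D_tx\rangle$ together with $e^{R(x)}$, integrated in $t$, can be compared via \eqref{vel-4} with the trajectory $s\mapsto R(D_sx)$. I would substitute $s$ so that $\int|x|\,dt$ along the path becomes $\int d(|D_sx|)$-type quantities, yielding a bound by the known weak/strong global estimates without the extra $|x|$ factor. If this fails, a fallback is to interpolate: prove weak type $(1,1)$ of $T$ (which is essentially contained in \cite{CCS9}, where the global small-time part was shown bounded for every $\varrho\ge1$), combine it with an $L^\infty$ or $L^2$ bound, and conclude by Marcinkiewicz interpolation for $1<p<2$ and by duality (the adjoint kernel, estimated through \eqref{dotKeps1}) for $p>2$.
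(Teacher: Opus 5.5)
\textbf{There is a genuine gap in the range $|x|^{-2}\lesssim t<c_0$.}

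Your reduction is sound and matches the paper's. By monotonicity in $\varrho$, \eqref{domination} and \eqref{important}, it suffices to bound on $L^p(\gamma_\infty)$ the positive operator $T$ with kernel $(1-\eta)\int_0^1|\dot K_t|\,dt$, which is supported in $G_A$. Your region $t\lesssim|x|^{-2}$ is indeed harmless: there the kernels are bounded on both $L^1$ and $L^\infty$, as for the paper's $\mathcal Q_m^-$ and $\widetilde{\mathcal Q}_m^-$. The range $|x|^{-2}\lesssim t<c_0$ is the whole difficulty, and none of your suggestions closes it.

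\emph{The majorant.} The bound by $(1+|x|)\sup_t\mathcal H_t|f|$ (or $(1+|x|)^2$ times it) cannot be handled as in Proposition~\ref{pr:strongpp_tlarge}. That H\"older trick needs $\mathcal H_t\colon L^{p-\varepsilon}(\gamma_\infty)\to L^p(\gamma_\infty)$ uniformly in $t$. By Proposition~\ref{p:hyper}(ii) this fails for small $t$, since $\|S_0(t)\|\to1$ as $t\to0$. The majorant simply overcounts, because for fixed $(x,u)$ only a short set of times contributes.

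\emph{The citation.} Your appeal to \cite{CCS8,CCS9} for $L^p$ bounds on the global kernel $\int|\dot K_t|\,dt$ is circular. Only its weak type $(1,1)$ is known there; the $L^p$ bound is precisely the present proposition.

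\emph{The fallback.} The $L^\infty$ endpoint is false for $T$. Take $u$ within $\sqrt s$ of $e^{sB}x$, with $|x|^{-2}\ll s<c_0$. Then $\eta(x,u)=0$, and $K_t(x,u)$ climbs from a negligible value at $t=s/2$ to $\simeq e^{R(u)}s^{-n/2}$ at $t=s$. Hence $\int|\dot K_t(x,u)|\,dt\gtrsim e^{R(u)}s^{-n/2}$. Integrating over this tube, whose Lebesgue measure is $\simeq s^{(n+1)/2}|x|$ per dyadic range of $s$, gives $T1(x)\gtrsim|x|$. The $L^2$ bound, and the weak type $(1,1)$ of the non-symmetric adjoint needed for your duality step, are asserted without argument.

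\textbf{What is missing are the paper's two ingredients.}

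First, a kernel bound with no loss of $|x|$. Split according to the sign of $R(u)-R(x)$, using \eqref{dotKeps} when $R(u)\ge R(x)$ and \eqref{dotKeps1} otherwise, so that the kernel carries $e^{R(x)\wedge R(u)}$. Then decompose dyadically in $|u-D_tx|/\sqrt t$, resp.\ $|u-e^{tB}x|/\sqrt t$. For $t>C_02^{2m}/|x|^2$ one finds $|u|\simeq|x|$, $t\simeq|x-u|/|x|$ and $|R(x)-R(u)|\simeq|x||x-u|$. Because $\partial_tR(D_tx)\simeq|x|^2$ and $-\partial_tR(e^{tB}x)\simeq|x|^2$, only a $t$-interval of length $\lesssim2^m\sqrt{|x-u|}/|x|^{3/2}$ contributes. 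This yields the kernel bound $2^m|x|^ne^{R(x)\wedge R(u)}$. Your remark about substituting along the trajectory via \eqref{vel-4} points this way, but it is not carried out.

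Second, a second endpoint, argued separately in the two regions:
\begin{itemize}
\item For $R(u)\ge R(x)$, the decay $e^{-(R(u)-R(x))}\le e^{-c|x||x-u|}$ gives an $L^\infty$ bound. This is interpolated with the weak type $(1,1)$ from \cite{CCS4}.
\item For $R(u)<R(x)$, $L^\infty$ fails as shown above. The paper instead uses a weighted Schur test with $w=e^{\alpha R}$, $0<\alpha<1/p$, exploiting the decay of $e^{-p'\alpha(R(x)-R(u))}$ and $e^{-(1-p\alpha)(R(x)-R(u))}$.
\end{itemize}
Without some such second endpoint for the part near the forward trajectory $e^{tB}x$, your argument does not produce $L^p$ bounds.
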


Since we know from \cite[Section 9]{CCS9}
that the   jump operator of order $\varrho=2$ associated with the local part of the Ornstein--Uhlenbeck operator for $t\in (0,1]$ is bounded on $L^p(\gamma_\infty)$,
Proposition \ref{pr:strongpp_tlarge}  and Proposition \ref{propo-glob-tsmall}  together yield Theorem  \ref{thm:main1}.

\medskip

The proof of  Proposition \ref{propo-glob-tsmall} occupies the rest of this section.    
Corollary \ref{c:tlarge} implies that we may restrict $t$ to the interval $(0, c_0)$ for some $c_0 \in (0,1)$ that will be determined as we go along.

Combining the estimate
  \eqref{important} with \eqref{domination},  we see 
   that to prove Proposition \ref{propo-glob-tsmall}
it is enough  to verify the $L^p$ boundedness of the operator whose  kernel is\\ $\int_0^{c_0}| \dot K_t(x,u)|\,
   dt \, \chi_{G_A} (x,u) $. 
   
   Further, we need only consider the
case $|x|> 1$. 
In fact, if $|x|\le  1$, then by the global condition $|x-u| \ge A/4 > 1$.
In the light also of \eqref{dtx}, then
 $$
 |u-D_t\,x| \ge |u-x| - |D_t\,x - x|  \ge |u-x| - Ct|x|  \gtrsim 1 
 $$  
provided $c_0$ and thus $t$ are small enough.  
Then \eqref{dotKeps} implies, for all $t < c_0$,
 \[
| \dot K_t(x,u)| \lesssim  e^{R(x)} \, t^{-C}\, \exp(-c/t) \lesssim 1.
 \]
 By means  of \eqref{important}, 
we can then bound  the variation when $|x| \le 1$, 
and therefore also the jumps,                                         
by  $\int |f|\,d\gamma_\infty$ which is bounded by $ \| f \|_{L^p(\gamma_\infty)}$.

 The assumption $|x|> 1$ will be valid in the rest of this section. The globality condition then shows that $|x| |x-u|> 1$, since $A$ is large.

   The kernel $\int_0^{c_0}| \dot K_t(x,u)|\,
   dt \, \chi_{G_A} (x,u) $
 is split  as follows:
   \begin{align*}
   \int_0^{c_0}| \dot K_t(x,u)|\, dt \, \chi_{G_A} (x,u) 
   &= \int_0^{c_0}| \dot K_t(x,u)|\, dt \, \chi_{G_A} (x,u) \,\chi_{R(u)\ge R(x)} (x,u)\\
   &\quad +\int_0^{c_0}| \dot K_t(x,u)|\, dt \, \chi_{G_A} (x,u) \,\chi_{R(u)< R(x)} (x,u),
\end{align*}
                              
  To estimate $\dot K_t(x,u)$ in these two terms, we will apply \eqref{dotKeps} to the first term and \eqref{dotKeps1} to the second.
  In each case, we introduce a partition of $(0,c_0)$ which  depends on the size of the main exponent $|u-D_t\,x|^2/t$ or  $|u-e^{tB}\,x|^2/t$ in 
   \eqref{dotKeps} or \eqref{dotKeps1}, respectively.
   
    The first partition is given by the sets 
\begin{align*}
 I_0(x,u)
=\left\{t \in(0,c_0): \:
   |u-D_t\, x |\leq
  \sqrt t
\,\right\}
\end{align*}
and
\begin{align*}
 I_m(x,u)
=\left\{t \in(0,c_0): \:
 2^{m-1} \sqrt t< |u-D_t\, x |\leq
 2^{m} \sqrt t
\,\right\}, \qquad m = 1,2,\dots.
\end{align*}

The second partition of $(0,c_0)$   consists of  the sets
\begin{align*}
\widetilde I_0(x,u)
=\left\{t \in(0,c_0): \:
   |u-e^{tB} x |\leq
  \sqrt t
\,\right\}
\end{align*}
and
\begin{align*}
\widetilde I_m(x,u)
=\left\{t \in(0,c_0): \:
 2^{m-1} \sqrt t< |u-e^{tB} x |\leq
 2^{m} \sqrt t
\,\right\}, \qquad m = 1,2,\dots.
\end{align*}

\medskip

If  $t \in  I_m(x,u)$, then \eqref{dotKeps} shows that
\begin{align} \label{dotKK}
 | \dot K_t(x,u)|\le  
  e^{R( x)}\
t^{-n/2} \,  \exp\left(-c\,2^{2m}\right) \,
 \left( \frac{|x|}{\sqrt t}+
\frac1{ t}
\right).     
\end{align}
  If instead  $t \in \widetilde I_m(x,u)$, then \eqref{dotKeps1} leads to a similar inequality, where the factor $e^{R( x)}$ is replaced by $e^{R(u)}$.

Set
\begin{equation*}
\mathcal Q_m(x,u) =
  e^{R( x)}
 \int_{I_m(x,u)} t^{-n/2}
 \left(  \frac{|x|}{\sqrt t} +
\frac1{ t}
\right)
dt  \,  \chi_{G_A} (x,u)\,\chi_{R(u)\ge R(x)}(x,u),       \qquad \;
m = 0, 1,\dots,
   \end{equation*}
   and
   \begin{equation*}
\widetilde{\mathcal Q}_m(x,u) =
  e^{R( u)}
 \int_{ \widetilde I_m (x,u)} t^{-n/2}
 \left(  \frac{|x|}{\sqrt t} +
\frac1{ t}
\right)
dt  \,  \chi_{G_A} (x,u)\,\chi_{R(u)< R(x)}(x,u),       \qquad \;
m = 0, 1,\dots.
   \end{equation*}
   
   Observe that the first factor in these two formulas both coincide with $e^{R( x)\wedge R(u)}$.

We define the  kernels
  \begin{equation} \label{suminm}
  \sum_{m=0}^\infty \,  \exp\left(-c\,2^{2m}\right) \,  \mathcal Q_m(x,u)
\end{equation}
and
 \begin{equation} \label{suminmJ}
  \sum_{m=0}^\infty \,  \exp\left(-c\,2^{2m}\right) \,  \widetilde{\mathcal Q}_m(x,u).
\end{equation}

The estimate \eqref{dotKK} implies that  the kernel  $\int_0^{c_0}| \dot K_t(x,u)|\,\, dt\, \chi_{G_A} (x,u)$ can be estimated by the sum of all the kernels in these two sums.
 Proposition \ref{propo-glob-tsmall} will therefore follow from the next proposition, 
 since the factors  $ \exp\left(-c\,2^{2m}\right)$ will                                                                            allow us to sum over $m$ in the space $L^{p}(\gamma_\infty)$

\begin{proposition}\label{Rm} 
Let $m \in \{0, 1,\dots \}$ and $1<p<\infty$. The operators whose kernels are
$\mathcal{Q}_{m} $ and $\widetilde{\mathcal Q}_m$
are both bounded on  $L^p(\gamma_\infty)$,
with a norm bounded by
$C\,2^{Cm}$ for some $C$.
\end{proposition}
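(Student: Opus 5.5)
We plan to bound the kernels $\mathcal Q_m$ and $\widetilde{\mathcal Q}_m$ pointwise by simpler kernels, and then prove $L^p(\gamma_\infty)$ boundedness of those through a Schur-type test with respect to $d\gamma_\infty$. We treat $\mathcal Q_m$ in detail; $\widetilde{\mathcal Q}_m$ is symmetric, with $e^{tB}x$ in place of $D_t x$ and the roles of the two exponential factors exchanged.

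\emph{First step: locate $I_m(x,u)$.} For $(x,u)\in G_A$ with $|x|>1$ and $t\in I_m(x,u)$, we have $|u-D_tx|\le 2^m\sqrt t$. Combining this with \eqref{dtx} gives
\[
|u-x|\le Ct|x|+2^m\sqrt t .
\]
The globality condition $|x-u|\gtrsim A/|x|$ then forces $t\gtrsim 2^{-2m}\min\{|x-u|^2,\ |x-u|/|x|\}$. Conversely, $t\lesssim |x-u|/|x|$ unless $|x-u|\lesssim 2^m\sqrt t$.

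A sharper description comes from \eqref{vel-4}. The curve $s\mapsto D_sx$ moves at speed $\simeq|x|$ and crosses the level sets of $R$ transversally. Since $R(u)\ge R(x)$, the point $u$ lies ``ahead'' of $x$ along the flow. So $I_m(x,u)$ is essentially contained in an interval around the time $t_0(x,u)$ at which $D_{t}x$ comes closest to $u$, with $t_0\simeq |x-u|/|x|$, and its length is $\lesssim 2^m\sqrt{t_0}/|x|$. Integrating $t^{-n/2}(|x|t^{-1/2}+t^{-1})$ over this set should give
\[
\mathcal Q_m(x,u)\lesssim 2^{Cm}\, e^{R(x)}\, t_0^{-n/2}\,\chi_{\{|u-D_{t_0}x|\lesssim 2^m\sqrt{t_0}\}}
\]
up to harmless factors. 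In effect, this is a ``Gaussian-like'' kernel $e^{R(x)}t^{-n/2}$ restricted to a tube of width $2^m\sqrt t$ around the orbit point $D_tx$, with $t\simeq|x-u|/|x|$.

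\emph{Second step: the Schur test.} For the kernel of the first step, we want weights $h(x)=e^{\alpha R(x)}$ and test
\[
\int \mathcal Q_m(x,u)\,h(u)^{p'}\,d\gamma_\infty(u)\lesssim 2^{Cm}h(x)^{p'},
\]
together with the dual estimate in $x$. The $u$-integration takes place over a tube near the orbit. Along that tube, $R(u)-R(x)\simeq |x|^2 t$ by \eqref{vel-4}, and the set where $|u-D_tx|\lesssim 2^m\sqrt t$ has Lebesgue measure $\simeq 2^{mn}t^{n/2}$. This cancels the factor $t^{-n/2}$, and the factor $e^{R(x)}e^{-R(u)}=e^{-(R(u)-R(x))}$ gives decay in $|x|^2t$. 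Summing over dyadic ranges of $t$ (equivalently of $R(u)-R(x)$) then produces a convergent geometric series, provided $\alpha p'<1$. The $x$-integration for fixed $u$ is handled the same way, using that $x\mapsto D_{-t}u$ sweeps the backward orbit.

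The choice of $\alpha$ should be the classical one from Gaussian harmonic analysis, for instance $h=e^{R/(pp')}$ as in the Pérez–Soria / García-Cuerva style of proofs for global parts. If the Schur test is too lossy at some $p$, the fallback is to prove weak type $(1,1)$ and $L^\infty$ boundedness, then interpolate; the $L^\infty$ bound is exactly the first Schur integral with $h\equiv1$.

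\emph{Main obstacle.} The hardest step will be the precise control of the length of $I_m(x,u)$, and the resulting integral in $t$, when $|x|$ is large. Here one must use \eqref{vel-4} and the quasi-linear behaviour of $s\mapsto D_sx$ for $|s|\le c_0$ to ensure that the orbit does not linger near $u$. This is where $c_0$ has to be taken small. The region $t\lesssim|x|^{-2}$, where the term $1/t$ dominates $|x|/\sqrt t$, should be treated separately. Globality gives $|x-u|\gtrsim 1/|x|$ there, so only $m$ with $2^m\gtrsim |x-u|/\sqrt t$ contribute. Each such piece is absorbed into the factor $2^{Cm}$, because $t^{-1}$ on a $t$-interval of length $\lesssim 2^{2m}|x|^{-2}$ costs at most $2^{Cm}$ after the Schur integration.
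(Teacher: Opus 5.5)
Your plan is correct. It rests on the same localization as the paper: the split of the $t$-range at $C_0 2^{2m}/|x|^2$, the lower bound $t>2^{-2m}/|x|^2$ from globality, the transversality of $s\mapsto D_s x$ to the level sets of $R$ via \eqref{vel-4}, and the comparability $R(u)-R(x)\simeq t|x|^2\simeq |x||x-u|$ on the upper piece.

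Where you differ is in how the four pieces are assembled.
\begin{itemize}
\item The paper uses a weighted Schur test ($w=e^{\alpha R}$, $0<\alpha<1/p$) only for $\widetilde{\mathcal Q}_m^+$.
\item For $\mathcal Q_m^-$ and $\widetilde{\mathcal Q}_m^-$ it interpolates between $L^1$ and $L^\infty$ bounds.
\item For $\mathcal Q_m^+$ it imports the weak type $(1,1)$ bound of \cite[Lemma 9.6]{CCS4}, proves an $L^\infty$ bound, and interpolates.
\end{itemize}
You run one weighted Schur test on all four pieces, and this works. $\mathcal Q_m^+$ needs $0<\alpha<1/p'$, and $\widetilde{\mathcal Q}_m^+$ needs $0<\alpha<1/p$. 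On the lower pieces, the factor $e^{R(x)\wedge R(u)}$ combined with any $\alpha\in[0,\min(1/p,1/p')]$ leaves a nonpositive exponent in both Schur integrals. What remains there are exactly the unweighted computations of Lemmas~\ref{tipoforte} and~\ref{tipofortetilde}. Your choice $\alpha=1/(pp')$ satisfies all these constraints at once.

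Moreover, if you integrate in $(t,u)$ by Fubini, as your second step suggests, the interval-length bound is not even needed on the upper pieces. For fixed $t$ the $u$-set has measure $\lesssim 2^{mn}t^{n/2}$, which cancels $t^{-n/2}$, and the weight produces $e^{-ct|x|^2}$. One is left with $\int_{C_02^{2m}/|x|^2}^{c_0}(|x|t^{-1/2}+t^{-1})e^{-ct|x|^2}\,dt\lesssim 1$.

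Your route is self-contained and bypasses the weak type $(1,1)$ estimate for $\mathcal Q_m^+$. The paper's route recycles endpoint bounds it already has and gets $p=\infty$ for three of the pieces, and $p=1$ for the lower ones, along the way.

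Two points in your write-up need correcting.

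\emph{The fallback fails for $\widetilde{\mathcal Q}_m^+$.} Weak type $(1,1)$ plus $L^\infty$ cannot work for this kernel, because it is unbounded on $L^\infty(\gamma_\infty)$. The factor $e^{R(u)}$ cancels the Gaussian density, and by Fubini the times $t\in(c_0/2,c_0)$ alone give $\int\widetilde{\mathcal Q}_m^+(x,u)\,d\gamma_\infty(u)\gtrsim 2^{mn}|x|$ for large $|x|$. So the exponential weight is indispensable for this piece. Dually, $\mathcal Q_m^+$ is unbounded on $L^1(\gamma_\infty)$, which is why the paper needs a weak type bound there.

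\emph{$\widetilde{\mathcal Q}_m$ is not literally symmetric to $\mathcal Q_m$.} In general $e^{tB}\neq D_{-t}$ unless the semigroup is symmetric. What you actually need is the separate identity \eqref{der:neg}, $-\partial_s R(e^{sB}x)=\frac12|Q^{1/2}Q_\infty^{-1}e^{sB}x|^2\simeq|x|^2$, which comes from the Lyapunov relation. It yields Lemma~\ref{size}, i.e.\ $R(x)-R(u)\simeq t|x|^2$. This is what makes $R(u)<R(x)$ the right cutoff and gives the decay in your Schur integrals for $\widetilde{\mathcal Q}_m$.
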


Before proving  Proposition \ref{Rm}, we recall some facts from \cite{CCS4}.
From now on, we fix  $m \in  \{0,1,\dots\}$. 
\begin{enumerate}
 \item
If  $ t \in I_m(x,u)$,
then 
\begin{equation} \label{dista}
  |u-x| \le |u - D_t\, x| + |D_t\, x - x| \lesssim  2^m\sqrt t +  t|x|,
\end{equation}
   and also
\begin{align}\label{rtxu}
   |  R(D_t\, x) -R(u)| \lesssim \, (|x|+|u|)\:|D_t\, x - u|_Q 
\lesssim (|x|+|u|)\: 2^m\sqrt t \,.
  \end{align}
  Analogously, if  $ t \in \widetilde I_m(x,u)$,
then 
\begin{equation} \label{distaJ}
  |u-x| \le |u - e^{tB}\, x| + |e^{tB}x- x| \lesssim  2^m\sqrt t +  t|x|,
\end{equation}
since in \eqref{dtx} one can replace $D_t\, x$ by $e^{tB}x$.
Further,
\begin{align}\label{rtxuJ}
   |  R(e^{tB} x) -R(u)| \lesssim \, (|x|+|u|)\:|e^{tB} x - u|_Q 
\lesssim (|x|+|u|)\: 2^m\sqrt t \,.
  \end{align}

 \item Let $(x,u) \in G_A$.
  If $A$ is chosen large enough, depending only on  $n$,   $Q$ and  $B$,
then
\begin{equation}\label{im}
  I_m(x,u) \subset (2^{-2m}/|x|^2,\: c_0), \qquad m = 0,1, \dots ,
\end{equation}
(see \cite[Lemma 9.3]{CCS4}).
Similarly, one has
\begin{equation}\label{imJ}
 \widetilde  I_m(x,u) \subset (2^{-2m}/|x|^2,\: c_0), \qquad m = 0,1, \dots,
\end{equation}
because \eqref{distaJ} allows us to replace $D_t\,x$ by $e^{tB}\,x$ in \cite[Lemma 9.3]{CCS4} and its proof.

                \item
Let  $ t \in I_m(x,u)$.
If the constant  $C_0 > 4$  is chosen large enough,
depending only on $n$,   $Q$ and  $B$,
then  $t > C_0\,2^{2m}/|x|^2$
 implies
\begin{align}
  |u|  &\simeq |x|,\label{u,x}
\\  
   R(u) -  R(x) &\simeq t|x|^2  \simeq |u-x|\,|x|
    \label{Ru-Rx} \\
\intertext{and} 
 t &\simeq  {|u-x|}/{|x|}.  \label{u-x}
\end{align}
We refer to \cite[Lemma 9.4]{CCS4} for a proof.
\end{enumerate}

As we shall now see,  the properties in (3) hold also when $t\in \widetilde I_m(x,u)$,  with one modification.

\begin{lemma}  \label{size}
Let  $t\in \widetilde I_m(x,u)$.
If the constant  $C_0 > 4$  is chosen large enough,
depending only on $n$,   $Q$ and  $B$,
then  $t > C_0\,2^{2m}/|x|^2$
 implies \eqref{u,x}, \eqref{u-x}, and 
\begin{align} \label{Rx-Ru}
   R(x) -  R(u) &\simeq t|x|^2  \simeq |u-x|\,|x|. 
    \end{align}
\end{lemma}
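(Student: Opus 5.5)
The approach is to transcribe the proof of \cite[Lemma 9.4]{CCS4} with $D_t\,x$ replaced by $e^{tB}x$. The only structural change is the sign of $R(\cdot)$ along the flow. By \eqref{vel-4}, $s\mapsto R(D_s x)$ is increasing, whereas along $s\mapsto e^{sB}x$ the form $R$ decreases. Indeed, since $BQ_\infty+Q_\infty B^*=-Q$,
\[
\frac{\partial}{\partial s}R(e^{sB}x)=\langle Q_\infty^{-1}e^{sB}x, Be^{sB}x\rangle=-\frac12\big|Q^{1/2}Q_\infty^{-1}e^{sB}x\big|^2\simeq-|e^{sB}x|^2 .
\]
For $0<s\le t\le c_0$ we also have $|e^{sB}x|\simeq|x|$. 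Integrating over $[0,t]$ therefore gives
\[
R(x)-R(e^{tB}x)\simeq t|x|^2 .
\]

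The three statements then follow in order. First, \eqref{rtxuJ} gives $|R(e^{tB}x)-R(u)|\lesssim(|x|+|u|)\,2^m\sqrt t$. Next, \eqref{distaJ} together with $2^m\sqrt t<t|x|/\sqrt{C_0}$ (which is the hypothesis $t>C_0 2^{2m}/|x|^2$) yields $|u-x|\lesssim t|x|\le c_0|x|$. Taking $c_0$ small, this gives $|u|\simeq|x|$, which is \eqref{u,x}. Consequently
\[
|R(e^{tB}x)-R(u)|\lesssim |x|\,2^m\sqrt t\le C_0^{-1/2}\,t|x|^2 ,
\]
and once $C_0$ is large this error is absorbed by the main term. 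Hence $R(x)-R(u)\simeq t|x|^2$, which is the first half of \eqref{Rx-Ru}.

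For \eqref{u-x}, the upper bound $|u-x|\lesssim t|x|$ is already established. The lower bound comes from the mean value inequality
\[
R(x)-R(u)\lesssim (|x|+|u|)\,|x-u|\simeq |x|\,|x-u| .
\]
Combined with $R(x)-R(u)\simeq t|x|^2$, this gives $t|x|\lesssim|u-x|$, so $t\simeq|u-x|/|x|$. This in turn yields $t|x|^2\simeq|u-x|\,|x|$, which completes \eqref{Rx-Ru}.

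The main point requiring care is the absorption step. The constants must be fixed in the right order: first $c_0$, to get $|e^{sB}x|\simeq|x|$ and $|u|\simeq|x|$; then $C_0$, large relative to the implicit constants in \eqref{distaJ} and \eqref{rtxuJ}. One must check that $C_0$ depends only on $n$, $Q$, $B$. Everything else is a direct copy of \cite[Lemma 9.4]{CCS4}.
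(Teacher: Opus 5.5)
Your proposal is correct and follows essentially the same route as the paper: you show $R(x)-R(e^{tB}x)\simeq t|x|^2$ from the negative derivative $-\tfrac12|Q^{1/2}Q_\infty^{-1}e^{sB}x|^2$, get $|u-x|\lesssim t|x|$ and hence $|u|\simeq|x|$ from \eqref{distaJ}, absorb the error from \eqref{rtxuJ} by taking $C_0$ large, and close with $R(x)-R(u)\lesssim|x|\,|x-u|$. The differences are cosmetic: you derive the derivative identity directly from $BQ_\infty+Q_\infty B^*=-Q$ instead of citing it, and you use the mean value inequality where the paper uses the factorization $\tfrac12(|x|_Q+|u|_Q)(|x|_Q-|u|_Q)$.
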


\begin{proof}
We essentially follow the arguments used for \cite[Lemma 9.4]{CCS4}.
Because of our assumptions on $t$,  \eqref{distaJ}  leads to    
\begin{equation}\label{diff}
  |u-x| \lesssim t|x|,
\end{equation} 
and choosing $c_0$ and thus also $t$ small we conclude that
$|u-x| < |x|/2 $. This implies \eqref{u,x}.

In contrast with \eqref{vel-4}, the derivative $\partial_s R(e^{sB}\,x)$ is negative. To estimate it,
 we recall  the identity
\begin{equation*}
\langle  B^* Q_\infty^{-1} x, 
x\rangle =
-
\frac12\, |Q^{1/2}\, Q_\infty^{-1} x|^2
              \end{equation*}
from \cite[Lemma 4.1]{CCS2}.
By replacing here  $x$ by $e^{sB}x$ after using the symmetry of $Q_\infty^{-1}$, we obtain for $0<s<1$
\begin{align}
-\frac{\partial}{\partial s}
R\big( e^{sB}x \big)
&=
-\frac12
\frac{\partial}{\partial s}
\langle Q_\infty^{-1} e^{sB}  x, e^{sB}  x
\rangle
=
-\langle Q_\infty^{-1} e^{sB} x,  B e^{sB} x
\rangle
\notag\\
&=-\langle B^* Q_\infty^{-1} e^{sB} x,  e^{sB} x
\rangle
=
\frac12
\big|
 Q^{1/2} 
 Q_\infty^{-1} e^{sB}  x\big|^2 
 \simeq | x|^2.    \label{der:neg}
 \end{align}
Thus
\begin{equation} \label{diff_R_t}
  R(x) - R(e^{tB}x)  = -\int_0^t \frac{\partial}{\partial s} R(e^{sB}x) \,ds \simeq t|x|^2,
\end{equation}
the last step since $t\in(0,c_0)$.

On the other hand, from \eqref{rtxuJ},                  
\eqref{u,x} and the assumption $t > C_0\,2^{2m}/|x|^2$, we have
\begin{equation*}
\big| R(e^{tB}\, x) -R(u)\big| \lesssim |x|\,2^m\sqrt t \le \frac{1}{\sqrt{C_0}} \,t|x|^2.
\end{equation*}
Writing $R(x) - R(u) = \big(R(x) - R(e^{tB}x)\big) + \big(R(e^{tB}x) - R(u)\big)$, we
can choose  $C_0$ so large that $ \big|R(e^{tB}x) - R(u)\big|$ is smaller than $\big(R(x) - R(e^{tB}x)\big)/2$. Hence, 
$R(x) - R(u) \simeq t|x|^2$.

To conclude we write, using \eqref{diff} for the last step,
               $$
t|x|^2  \simeq R(x) - R(u)  = \frac12 (|x|_Q + |u|_Q) (|x|_Q - |u|_Q) \lesssim |x| |x-u| \lesssim  t|x|^2,
$$ 
 which implies $t \simeq |u-x|/|x|$ and \eqref{Rx-Ru}.
    \end{proof}

Motivated by   items (2) and (3),
we  write  $I_m(x,u)=  I_m^-(x,u) \cup   I_m^+(x,u) $, with 
\begin{equation*}
   I_m^-(x,u) =  I_m(x,u) \cap (2^{-2m}/|x|^2,\:c_0\wedge  C_0\,2^{2m}/|x|^2 ]
\end{equation*}
and
\begin{equation*}       I_m^+(x,u) =  I_m(x,u) \cap (c_0\wedge  C_0\,2^{2m}/|x|^2,\: c_0 ).
\end{equation*}
Similarly, we shall write  $ \widetilde I_m(x,u)=  \widetilde I_m^-(x,u) \cup   \widetilde I_m^+(x,u) $, with analogous definitions of $ \widetilde I_m^\pm (x,u)$.

We then split the kernel $\mathcal Q_m$ as 
 $\mathcal Q_m = \mathcal Q_m^- + \mathcal Q_m^+$   for $m = 0,1,\dots$, where
\begin{align*}
\mathcal Q_m^-(x,u) 
&= e^{R( x)}\int_{I_m^-(x,u)} t^{-n/2} \left( \frac{|x|}{\sqrt t} + \frac1{ t} \right) dt \, \chi_{G_A} (x,u)\,\chi_{R(u)\ge R(x)}(x,u), \\
\intertext{and}
\mathcal Q_m^+(x,u) &=e^{R( x)}\int_{I_m^+(x,u)} t^{-n/2}
 \left( \frac{|x|}{\sqrt t} + \frac1{ t} \right) dt \, \chi_{G_A} (x, u)\, \chi_{R(u)\ge R(x)}(x,u).
\end{align*}
Analogously, we split $\widetilde{\mathcal Q}$ as $\widetilde{\mathcal Q} = \widetilde{\mathcal Q}_m^- + \widetilde{\mathcal Q}_m^+$ by restricting the  integration in $t$ to $ \widetilde I_m^-(x,u)$ and $ \widetilde I_m^+(x,u)$, respectively.

The following simple estimate will be useful when we estimate $\mathcal Q_m^-(x,u)$ and $\widetilde{\mathcal Q}_m^-(x,u)$: 
\begin{equation}\label{simpleint}
   \int_{2^{-2m}/|x|^2}^{C_0\,2^{2m}/|x|^2} t^{-n/2} \,
 \left( \frac{|x|}{\sqrt t}  +
\frac1{ t}
\right)\, dt  \lesssim 2^{mn} \,|x|^n.
\end{equation}
Notice that the upper bound of integration matters only for $n=1$.

\begin{lemma}\label{tipoforte}
Let $1\le p\le \infty$.
The operator with kernel
$\mathcal Q_m^-(x,u) $
 is of strong type $(p,p)$ with respect to $d\gamma_\infty$, with a norm bounded by
$C\,2^{Cm}$.
\end{lemma}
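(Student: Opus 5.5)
We plan to prove both the $L^1(\gamma_\infty)$ and the $L^\infty(\gamma_\infty)$ bounds for the operator with kernel $\mathcal Q_m^-$, each with constant $C\,2^{Cm}$, and then interpolate (Riesz--Thorin, since the kernel is nonnegative) to get all $1\le p\le\infty$. The starting point is the pointwise bound on the kernel. Since $I_m^-(x,u)\subset (2^{-2m}/|x|^2,\,C_0 2^{2m}/|x|^2]$, \eqref{simpleint} gives
\begin{equation*}
\mathcal Q_m^-(x,u)\lesssim 2^{mn}|x|^n\, e^{R(x)}\,\chi_{E_m}(x,u),
\end{equation*}
where $E_m$ is the set of $(x,u)$ with $|x|>1$, $R(u)\ge R(x)$ and $I_m^-(x,u)\neq\emptyset$.

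The next step is to localize $E_m$. If $t\in I_m^-(x,u)$, then \eqref{dista} gives
\begin{equation*}
|u-x|\lesssim 2^m\sqrt t+t|x|\lesssim 2^{2m}\sqrt{C_0}/|x|,
\end{equation*}
using $t\le C_0 2^{2m}/|x|^2$ and $|x|>1$. So $E_m$ is contained in $\{|u-x|\le C2^{2m}/|x|\}$. In this region $|u|\simeq|x|$ as long as $2^{2m}/|x|\lesssim|x|$. The complementary case, $|x|^2\lesssim 2^{2m}$, has $|x|$ and $|u|$ both bounded by $C2^m$, and can be handled crudely with the same argument below. Since $R(u)\ge R(x)$ on $E_m$, we have $e^{R(x)}\le e^{R(u)}$, and this is the key to controlling the Gaussian factor.

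For $L^\infty$ we estimate, for fixed $x$,
\begin{equation*}
\int \mathcal Q_m^-(x,u)\,d\gamma_\infty(u)\lesssim 2^{mn}|x|^n e^{R(x)}\int_{|u-x|\le C2^{2m}/|x|} e^{-R(u)}\,du .
\end{equation*}
Because $e^{-R(u)}\le e^{-R(x)}$ on $E_m$, the right-hand side is at most $2^{mn}|x|^n\,(2^{2m}/|x|)^n\lesssim 2^{3mn}$.

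For $L^1$ we fix $u$ and integrate over $x$:
\begin{equation*}
\int \mathcal Q_m^-(x,u)\,d\gamma_\infty(x)\lesssim 2^{mn}\int_{|x-u|\le C2^{2m}/|x|}|x|^n\,dx ,
\end{equation*}
since the factor $e^{R(x)}$ cancels the density $e^{-R(x)}$. Using $|x|\simeq|u|$, the region is contained in a ball of radius $C2^{2m}/|u|$, and the integral is again $\lesssim 2^{3mn}$.

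In the small region $|x|\lesssim 2^m$, the radius bound becomes $|u-x|\lesssim 2^{2m}$ and $|x|^n\lesssim 2^{mn}$. The same computations then still give a bound $C2^{Cm}$, because the Gaussian factors cancel in the same way: $e^{R(x)-R(u)}\le1$ in the $L^\infty$ estimate, and exact cancellation in the $L^1$ estimate.

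The main point to be careful about is the localization step. It must yield the radius $2^{2m}/|x|$ uniformly, and it must correctly handle the comparability of $|x|$ and $|u|$ when $|x|$ is not large compared to $2^m$, so that the powers $|x|^n$ are exactly compensated by the volume of the ball. Everything else is direct integration.
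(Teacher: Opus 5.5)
Your proposal is correct and follows the paper's proof. Both use the pointwise bound from \eqref{simpleint} and \eqref{dista}, the $L^\infty$ estimate via $e^{R(x)-R(u)}\le 1$ on the support followed by the volume of a ball of radius $C2^{2m}/|x|$, and interpolation. The one difference is that the paper simply cites \cite[Lemma 9.5]{CCS4} for the $L^1$ bound, while you prove it directly: $e^{R(x)}$ cancels exactly against the density of $d\gamma_\infty(x)$, and you split into $|x|\gtrsim 2^m$, where $|x|\simeq|u|$, and $|x|\lesssim 2^m$. That argument is valid.
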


\begin{proof}
In \cite[Lemma 9.5]{CCS4} we showed that this operator  is of strong type $(1,1)$ with respect to $d\gamma_\infty$, with a norm bounded by
$C\,2^{Cm}$. 

We will  prove that it is similarly bounded on $L^\infty (\gamma_\infty)$.

 For $ t < C_0\,2^{2m}/|x|^2$, the estimate \eqref{dista} implies
  \begin{equation} \label{qm-}
  |u-x| \le 2\,C_0\, 2^{2m}/|x|.
  \end{equation}
  
  Together with \eqref{simpleint} and the definition of $\mathcal Q_m^- $, this leads to
\begin{align*}
\mathcal Q_m^-(x,u)
\lesssim  \ e^{R( x)\wedge R(u)}\,2^{Cm}\,|x|^{n}\,
\chi_{\left\{|u-x|\le  2\,C_0\, 2^{2m}/|x| \right\} }.
  \end{align*}

Thus
\begin{align*}
  \int \mathcal Q_m^-(x,u)\, \,  d\gamma_\infty (u) &\lesssim
     2^{Cm}\,|x|^{n}\,
\int e^{R( x)\wedge R(u)}\,\chi_{\left\{|u-x|\le  2\,C_0\, 2^{2m}/|x| \right\}}  \,d\gamma_\infty (u)\\
& \lesssim  \ 2^{Cm}\,|x|^{n}\,
\int_{\left\{|u-x|\le  2\,C_0\, 2^{2m}/|x| \right\}}\, du
 \lesssim  \ 2^{Cm},
 \end{align*}
which proves the desired boundedness on $L^\infty (\gamma_\infty)$.

A standard interpolation argument now completes the proof of the lemma.
\end{proof}

\medskip

Analogous $L^p$ mapping properties hold for  $\widetilde{\mathcal{Q}}^{-}_{m}$, as we shall now see.

\begin{lemma}\label{tipofortetilde}
Let $1\le p\le \infty$.
The operator with kernel
$\widetilde{\mathcal{Q}}^{-}_{m}(x,u) $
 is of strong type $(p,p)$ with respect to $d\gamma_\infty$, with a norm bounded by
$C\,2^{Cm}$.
\end{lemma}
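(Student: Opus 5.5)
The strategy is the one used for Lemma \ref{tipoforte}: bound the kernel $\widetilde{\mathcal Q}_m^-$ pointwise by an expression supported in a small neighbourhood of the diagonal, with a weight $e^{R(x)\wedge R(u)}$. We then verify boundedness on $L^1(\gamma_\infty)$ and on $L^\infty(\gamma_\infty)$ directly and conclude by interpolation (Riesz--Thorin or Marcinkiewicz, since the operator is positive).

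\textbf{Step 1: a pointwise bound for the kernel.} For $t\in \widetilde I_m^-(x,u)$, the estimate \eqref{imJ} gives $t>2^{-2m}/|x|^2$, and by definition $t\le C_0 2^{2m}/|x|^2$. Then \eqref{distaJ} yields
\[
|u-x|\lesssim 2^m\sqrt t+t|x|\lesssim 2^{2m}/|x| .
\]
Together with \eqref{simpleint} and the fact that the exponential factor of $\widetilde{\mathcal Q}_m$ is $e^{R(u)}=e^{R(x)\wedge R(u)}$ on the support of $\chi_{R(u)<R(x)}$, this gives
\[
\widetilde{\mathcal Q}_m^-(x,u)\lesssim e^{R(x)\wedge R(u)}\,2^{Cm}|x|^n\,\chi_{\{|u-x|\le C2^{2m}/|x|\}}.
\]

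\textbf{Step 2: the $L^\infty$ bound.} Since $e^{R(x)\wedge R(u)}e^{-R(u)}\le 1$, integrating in $d\gamma_\infty(u)$ gives at most $2^{Cm}|x|^n\,|B(x,C2^{2m}/|x|)|\lesssim 2^{Cm}$. This is exactly the computation in Lemma \ref{tipoforte}.

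\textbf{Step 3: the $L^1$ bound.} We need $\sup_u\int \widetilde{\mathcal Q}_m^-(x,u)\,d\gamma_\infty(x)\lesssim 2^{Cm}$. Using $e^{R(x)\wedge R(u)}e^{-R(x)}\le1$, this reduces to bounding $2^{Cm}\int_{\{|u-x|\le C2^{2m}/|x|\}}|x|^n\,dx$. Here $|x|>1$, and this is where the main (mild) obstacle lies: the radius is expressed in terms of $|x|$ rather than $|u|$. We handle it by comparing $|x|$ and $|u|$.
\begin{itemize}
\item If $|u|\ge 2C2^{2m}$, then $|u-x|\le C2^{2m}/|x|\le C2^{2m}$ forces $|x|\ge |u|/2$. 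Hence also $|x|\lesssim |u|+2^{2m}\lesssim |u|$, so the integral is $\lesssim 2^{Cm}|u|^n(2^{2m}/|u|)^n\lesssim 2^{Cm}$.
\item If $|u|<2C2^{2m}$, then $|x|\lesssim 2^{2m}$ on the region, so the integral is trivially $\lesssim 2^{Cm}$.
\end{itemize}
Alternatively, one could quote the $(1,1)$ bound from \cite{CCS4}, adapted via \eqref{distaJ} and \eqref{imJ}, but the direct symmetric computation above suffices.

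Interpolating the $L^1$ and $L^\infty$ bounds then gives the strong type $(p,p)$ for $1\le p\le\infty$, with norm bounded by $C\,2^{Cm}$.
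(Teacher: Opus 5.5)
Your proof is correct and follows essentially the paper's route: the same pointwise bound from \eqref{distaJ}, \eqref{imJ} and \eqref{simpleint}, the identical $L^\infty(\gamma_\infty)$ computation, an $L^1(\gamma_\infty)$ estimate obtained by comparing $|x|$ with $|u|$, and interpolation. The only difference is cosmetic and lies in the $L^1$ step. The paper uses $R(u)<R(x)$, which gives $|x|^{-1}\lesssim|u|^{-1}$, together with $|u-x|\lesssim 2^m$. You instead split according to whether $|u|\ge 2C2^{2m}$ and use $|x|>1$; both versions give a bound $C\,2^{Cm}$.
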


\begin{proof}

We first verify the case $p=1$.
This can be done  as in  \cite[Lemma 9.5]{CCS4}, by  replacing  $I_m^-$ by $ \widetilde I_m^-$.
Here we give a simplified version of 
the proof.

 If $t  \in \widetilde{\mathcal{I}}^{-}_{m}(x,u)$ so that $ t < C_0\,2^{2m}/|x|^2$, the estimate \eqref{distaJ} implies that \eqref{qm-} holds again,
  and also, quite trivially, that
   \begin{equation} \label{trivest}
  |u-x| \lesssim  2^m\sqrt t +  t|x| \lesssim 2^m + \sqrt t\, |x|  \lesssim 2^m.
  \end{equation}
  Hence,
   \begin{equation} \label{trivx}
 |x| \le |u| +  |u-x| \lesssim |u | +  2^m. 
  \end{equation}
 
                         The expression for   $\widetilde{\mathcal{Q}}^{-}_{m}(x,u) $   and \eqref{simpleint} together  yield                  
 \begin{align*}
  \int \widetilde{\mathcal Q}_m^-(x,u)\, &\,  d\gamma_\infty (x) \lesssim
  2^{Cm}\,
\int_{R(u) < R(x)}  e^{R( x)\wedge R(u)}\,|x|^{n}\,\chi_{\left\{|u-x|\le  2\,C_0\, 2^{2m}/|x| \right\}}  \,d\gamma_\infty (x). 
 \end{align*}
 Here we estimate the factor $|x|^{n}$ by means of  \eqref{trivx}. Further,
  $R(u) < R(x)$ implies $|u|\simeq |u|_Q < |x|_Q \simeq |x|$, so that $|x|^{-1} \lesssim |u|^{-1}$. If we combine this with \eqref{trivest}, we obtain
   \begin{align*}
  \int \widetilde{\mathcal Q}_m^-(x,u)\, \,  d\gamma_\infty (x) & \lesssim
   2^{Cm}\,\left(|u|^{n}+ 2^{nm}\right)
   \int_{|x-u|\le C2^{m}\wedge C 2^{2m}/|u|}   \,dx  \\
   & \lesssim
   2^{Cm}\,\left(|u|^{n}+ 2^{nm}\right) 
   \left( 2^{mn}\wedge C  2^{2mn}/|u|^n    \right)
\\
  & \lesssim 2^{Cm}\,  \left(\frac{2^{2nm}|u|^{n}}{|u|^{n}} + 2^{2nm}\right) 
  \lesssim  2^{Cm}.
 \end{align*}
  
  The desired bound for $p=1$ follows.

For the case $p= \infty$, we apply again \eqref{simpleint} and \eqref{qm-} to get
\begin{align*}
  \int  \widetilde{\mathcal{Q}}^{-}_{m}(x,u)\,   d\gamma_\infty (u) \lesssim
2^{Cm}\, |x|^{n}\,
\int_{\left\{|u-x|\lesssim  2\,C_0\, 2^{2m}/|x| \right\}} \, du \lesssim  \ 2^{Cm}.
 \end{align*}
An interpolation now ends the proof of Lemma~\ref{tipofortetilde}.
\end{proof}

\medskip
Next, we deal with the  kernel
$\mathcal{Q}^{+}_{m} $.

\begin{proposition}\label{tipofortepp}
The operator with kernel
$\mathcal{Q}^{+}_{m}(x,u) $
 is bounded on  $L^p(\gamma_\infty)$, for all $1< p\le \infty$,  with a norm bounded by
$C\,2^{Cm}$.
\end{proposition}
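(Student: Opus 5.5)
The plan is to get the $L^\infty(\gamma_\infty)$ bound directly from the kernel estimate, and then interpolate it with a weak type $(1,1)$ estimate for the same kernel. This mirrors the proof of Lemma~\ref{tipoforte}. The strong type $(1,1)$ is not expected here, which is why $p=1$ is excluded in the statement. For the weak type $(1,1)$ of the operator with kernel $\mathcal Q_m^+$, with norm $C\,2^{Cm}$, I would invoke the corresponding result in \cite[Section 9]{CCS4}, where the global part for small $t$ was treated along the same dyadic splitting. Marcinkiewicz interpolation between weak $(1,1)$ and strong $(\infty,\infty)$ then gives boundedness on $L^p(\gamma_\infty)$ for $1<p\le\infty$ with norm $C\,2^{Cm}$.

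The main step is a pointwise bound for $\mathcal Q_m^+$. On $I_m^+(x,u)$ we have $t>C_0\,2^{2m}/|x|^2$, so item (3) applies: \eqref{u,x}, \eqref{Ru-Rx} and \eqref{u-x} hold.
\begin{itemize}
\item By \eqref{u-x}, every $t\in I_m^+(x,u)$ satisfies $t\simeq |u-x|/|x|$. The $t$-integration therefore runs over an interval of length $\lesssim t_0:=|u-x|/|x|$, on which the integrand is essentially constant. Hence
\[
\mathcal Q_m^+(x,u)\lesssim e^{R(x)}\, t_0^{-n/2}\big(|x|\sqrt{t_0}+1\big)\,\chi_{\{|u-x|\gtrsim 2^{2m}/|x|\}} .
\]
\item By \eqref{Ru-Rx} and $R(u)\ge R(x)$, we have $e^{R(x)}e^{-R(u)}\le e^{-c|x||u-x|}$.
\end{itemize}

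For the $L^\infty$ bound, fix $x$ and integrate in $u$ against $d\gamma_\infty(u)$, which contributes the factor $e^{-R(u)}$. Passing to polar coordinates around $x$ with $r=|u-x|$ and substituting $s=|x|\,r$, we get $t_0=s/|x|^2$ and $r^{n-1}\,dr=|x|^{-n}s^{n-1}\,ds$. The powers of $|x|$ cancel exactly:
\[
\int \mathcal Q_m^+(x,u)\,d\gamma_\infty(u)\lesssim \int_0^\infty e^{-cs}\, s^{n/2-1}\big(s^{1/2}+1\big)\,ds\lesssim 1 .
\]
This bound is uniform in $x$ and even in $m$.

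The step I expect to be the main obstacle is making sure the weak type $(1,1)$ estimate for $\mathcal Q_m^+$ is really available from \cite{CCS4} in exactly this form, that is, with the extra cutoff $\chi_{R(u)\ge R(x)}$ and the radii enlarged by $A$. If it is not, my fallback is a direct $L^p$ proof by a Schur test with weights $e^{\alpha R}$ for $1<p<\infty$. Writing $T$ for the operator with kernel $\mathcal Q_m^+$, this amounts to showing $T^*$ is bounded on $L^{p'}(\gamma_\infty)$. The dual integral in $x$ for fixed $u$ requires controlling $e^{R(x)-R(u)}$ times the factor $e^{R(u)-R(x)}=e^{\theta}$, with $\theta\simeq|x||u-x|$, coming from the density $d\gamma_\infty(x)$. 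This factor is fatal at $p=1$, but for $p>1$ it is tempered by the weight $e^{(R(u)-R(x))/p}$. Since $|x|\simeq|u|$ on the support, the same substitution $s=|x|\,|u-x|$ should then give a convergent integral $\int e^{-c_p s}(\cdots)\,ds$.
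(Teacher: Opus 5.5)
Your proposal is correct and follows the paper's route: weak type $(1,1)$ from \cite[Lemma 9.6]{CCS4}, an $L^\infty(\gamma_\infty)$ bound, and interpolation. The kernel treated in \cite[Lemma 9.6]{CCS4} is $\mathcal D_m^+$, which has $|x|^2+1/t$ where $\mathcal Q_m^+$ has $|x|/\sqrt t+1/t$; the inequality $|x|/\sqrt t\le\frac12(|x|^2+1/t)$ together with positivity of the kernels (which also takes care of the cutoff $\chi_{R(u)\ge R(x)}$) gives $\mathcal Q_m^+\lesssim\mathcal D_m^+$, so the citation applies exactly as you hoped. For the $L^\infty$ bound, the paper uses $\partial_tR(D_tx)\simeq|x|^2$ to shrink the $t$-set to length $C2^m\sqrt{|x-u|}/|x|^{3/2}$ and then the globality condition $|x||x-u|>1$ to get $\mathcal Q_m^+\lesssim e^{R(x)}2^m|x|^n$, whereas your cruder length $\lesssim t_0$ combined with the substitution $s=|x||u-x|$ works just as well and even gives an $m$-independent bound; your Schur-test fallback also succeeds, with weights $e^{\alpha R}$ and $0<\alpha<1/p'$.
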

\begin{proof}
In  \cite[Lemma 9.6]{CCS4} it is proved that the kernel
\begin{align}
\mathcal D_m^+(x,u) &=e^{R( x)}\int_{I_m^+(x,u)} t^{-n/2} \left(|x|^2+  \frac1{ t} \right) dt \, \chi_{G_A} (x, u)
\end{align}
gives an operator of weak type (1,1) with respect to $d\gamma_\infty$, with quasinorm at most $C2^{Cm}$.

By means of the inequality between geometric and arithmetic means, we see that 
$ \mathcal Q_m^+\lesssim  \mathcal D_m^+$, 
so that
$\mathcal Q_m^+$ defines an operator with the same weak type property.

We will soon verify that the operator with kernel $\mathcal Q_m^+ (x,u)$ is also bounded on $L^\infty (\gamma_\infty)$, with  norm at most 
$C\,2^{Cm}$.
By interpolation, the proposition then follows.

In the light of \eqref{rtxu}, \eqref{u,x} and \eqref{u-x},
 any $t\in I_m^+(x,u)$ satisfies
\begin{align} \label{restr-t}
\big|R(D_t \,x) -R(u)\big|
\lesssim |x|\,2^m\,\sqrt t
\simeq  2^m\, \sqrt {{|x-u|}{|x|}}.
\end{align}
Since \eqref{vel-4} implies  that
\begin{equation*}
  \partial_t R(D_t\, x)
\simeq |x|^2, \quad 0<t<1,
\end{equation*}

we deduce that \eqref{restr-t} can  hold 
for $t\in I_m^+(x,u)$  only when $t$ is       in
a subinterval of length at most $ C\,  2^m\, \sqrt{|x-u|}/ |x|^{3/2}$,
call it $I$.

 Starting with the definition of $\mathcal Q_m^+(x,u)$, we apply first \eqref{u-x} to replace $t$ by ${|x-u|}/{|x|}$, 
 then  the bound on the length of $I$, and finally the gobality condition ${|x-u|}{|x|}>1$:
   \begin{align} \label{est_tildeQ} &
{\mathcal{Q}}^{+}_{m}(x,u) 
\lesssim   e^{R( x)}\,  2^m\, \frac{ \sqrt{|x-u|}} {|x|^{3/2}}\  \,
\left(  \frac{|x-u|}{ |x|} \right)^{-n/2} \, \left( \frac{|x|^{3/2}}{ \sqrt{|x-u|}} +  \frac{|x|}{ |x-u|} \right)\notag \\ 
&\qquad = e^{R( x)}\,  2^m\, \left(  |x|^{n/2}   |x-u|^{-n/2}    +    |x|^{(n-1)/2}    |x-u|^{-(n+1)/2}    \right) 
\lesssim   e^{R( x)}\, 2^m\, |x|^{n}.\qquad
\end{align}

  Observe  that if $\mathcal Q_m^+(x,u) \ne 0$, then there exists a $t \in  I_m^+(x,u)$, and   \eqref{Ru-Rx} says that $R(u)-R(x) \simeq |x||x-u|$.
    We can now integrate $\mathcal Q_m^+(x,u)$, to get 
    \begin{align*}
 \int \mathcal Q^+_m(x,u) \,d\gamma_\infty(u)&
 \lesssim 2^m\, |x|^{n}\,\int_{R(u) - R(x) >c|x||x-u|} e^{R(x)-R(u)}\,du \\
& \lesssim 2^m\, |x|^{n}\,\int e^{-c|x|\,|x-u|}\,du \simeq 2^m.
\end{align*}

Thus the operator with kernel
$ \mathcal{Q}_{m}^+ (x,u)$
 is bounded on $L^\infty (\gamma_\infty)$, with a  norm  at most
$C\,2^{m}$. 
\end{proof}

It remains to treat $\widetilde{\mathcal{Q}}^{+}_{m}$.

\begin{proposition}\label{tipofortepp_tilde}
The operator with kernel
$\widetilde{\mathcal{Q}}^{+}_{m}(x,u)$
 is bounded on  $L^p(\gamma_\infty)$ for all $1 < p < \infty$, with a norm bounded by
$C\,2^{Cm}$.
\end{proposition}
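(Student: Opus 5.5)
The plan is to reduce $\widetilde{\mathcal Q}^{+}_{m}$ to a pointwise kernel bound of the same shape as \eqref{est_tildeQ}. Then, conjugating by the Gaussian density, I would turn it into a kernel with exponential off-diagonal decay on $L^p(dx)$, to which the Schur test applies. The argument for $L^\infty$ used in Proposition~\ref{tipofortepp} cannot be copied. There the factor $e^{R(x)-R(u)}$ was small, whereas now the factor is $e^{R(u)}$ with $R(u)<R(x)$. Integrating against $d\gamma_\infty(u)$ then gives no decay, which is why the endpoint $p=\infty$ is excluded and I expect the estimate to degenerate as $p\to\infty$.

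\emph{Step 1: pointwise bound.} Let $t\in\widetilde I_m^+(x,u)$, so that $t>C_0 2^{2m}/|x|^2$. Lemma~\ref{size} gives $|u|\simeq|x|$, $t\simeq|u-x|/|x|$ and $R(x)-R(u)\simeq |x||x-u|$. By \eqref{rtxuJ}, $|R(e^{tB}x)-R(u)|\lesssim |x|2^m\sqrt t\simeq 2^m\sqrt{|x-u||x|}$. By \eqref{der:neg}, $-\partial_t R(e^{tB}x)\simeq |x|^2$, so $t\mapsto R(e^{tB}x)$ is strictly monotone with speed $\simeq|x|^2$. Hence such $t$ lie in an interval of length $\lesssim 2^m\sqrt{|x-u|}/|x|^{3/2}$. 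Exactly as in \eqref{est_tildeQ}, and using the globality condition $|x||x-u|>1$, this yields
\[
\widetilde{\mathcal Q}^{+}_{m}(x,u)\lesssim e^{R(u)}\,2^m\,|x|^n\,\chi_{E}(x,u),
\]
where $E=\{(x,u): |u|\simeq|x|,\ R(x)-R(u)\ge c|x||x-u|\}$.

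\emph{Step 2: conjugation.} Since $d\gamma_\infty(u)\simeq e^{-R(u)}du$, the operator acts as $Tf(x)\lesssim 2^m|x|^n\int_E |f(u)|\,du$. Set $F=f e^{-R/p}$, so that $\|f\|_{L^p(\gamma_\infty)}\simeq\|F\|_{L^p(dx)}$, and similarly for $Tf$. Then $e^{-R(x)/p}\,Tf(x)$ is dominated by $\int k(x,u)|F(u)|\,du$, where
\[
k(x,u)=2^m|x|^n e^{-(R(x)-R(u))/p}\chi_E(x,u)\lesssim 2^m|x|^n e^{-c|x||x-u|/p}\chi_{\{|u|\simeq|x|\}}.
\]

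\emph{Step 3: Schur test.} For fixed $x$, $\int k(x,u)\,du\lesssim 2^m|x|^n\int e^{-c|x||x-u|/p}\,du\lesssim_p 2^m$. For fixed $u$, using $|x|\simeq|u|$, similarly $\int k(x,u)\,dx\lesssim_p 2^m$. Hence the operator with kernel $k$ is bounded on $L^p(dx)$, so $\widetilde{\mathcal Q}^{+}_{m}$ is bounded on $L^p(\gamma_\infty)$ with norm $\lesssim_p 2^m\le C2^{Cm}$.

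The main obstacle is Step 1, namely making sure the localization of $t$ to a short interval works for the nonsymmetric flow $e^{tB}$ rather than $D_t$. This rests on the monotonicity \eqref{der:neg}, whose sign is opposite to that in \eqref{vel-4}, together with the replacement estimates of Lemma~\ref{size}. Everything after that is Schur's test with a $p$-dependent constant.
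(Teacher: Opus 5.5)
Your proposal is correct and follows the paper's proof. You obtain the same pointwise bound $\widetilde{\mathcal Q}^{+}_{m}(x,u)\lesssim e^{R(u)}2^m|x|^n$ by localizing $t$ through the monotonicity \eqref{der:neg}, and then apply Schur's test. Your conjugation by $e^{-R/p}$ followed by the unweighted Schur test on $L^p(dx)$ is exactly the paper's weighted Schur test on $L^p(\gamma_\infty)$ with $w=e^{\alpha R}$, for the particular choice $\alpha=1/(pp')\in(0,1/p)$.
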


\begin{proof}
We start by copying the arguments that led to \eqref{est_tildeQ} in the proof of Proposition \ref{tipofortepp}, with only small modifications.

Using \eqref{rtxuJ} and then \eqref{u,x}  and \eqref{u-x}  (these two are still valid here), we see that
 any $t\in \widetilde I_m^+(x,u)$ satisfies
\begin{align} \label{restr-tJ}
\big|R(e^{tB} x) -R(u)\big|
\lesssim |x|\,2^m\,\sqrt t
\simeq  2^m\, \sqrt {{|x-u|}{|x|}}.
\end{align}
Since \eqref{der:neg} says that $-\partial_t R(e^{tB} x) \simeq |x|^2$, it follows that
 \eqref{restr-tJ} can  hold 
for $t\in  \widetilde I_m^+(x,u)$  only when $t$ is       in
a subinterval of length at most $ C\,  2^m\, \sqrt{|x-u|}/ |x|^{3/2}$.
    
 We apply this to the integral in the expression for $\widetilde{\mathcal{Q}}^{+}_{m}(x,u) $, after  replacing $t$ by ${|x-u|}/{|x|}$ by means of \eqref{u-x}.
 The result will be like \eqref{est_tildeQ}, except that the first factor is now $e^{R( u)}$; thus we have
   \begin{align} \label{est_tildeQJ}
\widetilde{\mathcal{Q}}^{+}_{m}(x,u) \lesssim   e^{R( u)}\, 2^m\, |x|^{n}.
\end{align}

  In order to conclude, we apply a weighted version of  Schur's test on $L^p(\gamma_\infty)$;  see \cite[Theorem 3.2.2]{Zhu}.
It is enough to prove the following two estimates:
\begin{align}
\int \widetilde{\mathcal{Q}}_m^+(x,u)\, w(u)^{p'}\, d\gamma_\infty(u) &\le C\, 2^{Cm} w(x)^{p'} \qquad \text{for a.e. } x, \label{schur1}\\
\int \widetilde{\mathcal{Q}}_m^+(x,u) \, w(x)^{p} \,d\gamma_\infty(x) &\le C\, 2^{Cm} w(u)^{p} \qquad \text{for a.e. } u, \label{schur2}
\end{align}
where $w(x)=\exp(\alpha R(x))$, with  $0 < \alpha < 1/p$.   

  We start with \eqref{schur1} and use \eqref{Rx-Ru} and \eqref{est_tildeQJ} to get
  \begin{align} \label{schur11} 
\int \widetilde{\mathcal{Q}}_m^+(x,u)\, w(u)^{p'} d\gamma_\infty(u) 
&\lesssim   2^m\, |x|^{n}\, \int_{R(x) - R(u) >c|x||x-u|}  e^{R( u)}\, e^{p'\alpha R( u)}\, d\gamma_\infty(u)\\
 & \lesssim  2^m\,e^{p'\alpha R( x)}\,|x|^{n}\, \int_{R(x) - R(u) >c|x||x-u|} e^{-p'\alpha (R( x)-R( u))}\,du \\
 &\lesssim 2^m\,e^{p'\alpha R( x)}\,|x|^{n}\, \int  e^{-cp'|x||x-u|}\,du \lesssim  2^m\,w(x)^{p'}.
\end{align}
  
  To deal with  \eqref{schur2}, we first observe that if $t \in \widetilde{\mathcal{I}}_m^+(x,u) \ne \emptyset$, then \eqref{u,x} and \eqref{Rx-Ru} hold, 
  i.e.,   $|x| \simeq |u|$ and  $R(x) - R(u) \simeq |u||x-u|$. Instead of \eqref{schur11}, we now get 
   \begin{align}
\int \widetilde{\mathcal{Q}}_m^+(x,u)\, w(x)^{p} d\gamma_\infty(x) 
&\lesssim   2^m\, |u|^{n}\, \int_{R(x) - R(u) >c|u||x-u|}  e^{R( u)}\, e^{p\alpha R( x)}\, e^{-R( x)}\,dx\\
 & \lesssim  2^m\,e^{p\alpha R( u)}\,|u|^{n}\, \int_{R(x) - R(u) >c|u||x-u|} e^{-(1-p\alpha)(R( x)-R( u))}\,dx \\
 &\lesssim 2^m\,e^{p\alpha R( u)}\,|u|^{n}\, \int  e^{-c(1-p\alpha)|u||x-u|}\,dx \lesssim  2^m\,w(u)^{p}.
\end{align} 
The proposition is proved. 
\end{proof}

\medskip

Finally,  
Lemma \ref{tipoforte}, Lemma \ref{tipofortetilde}, Proposition \ref{tipofortepp}
and Proposition \ref{tipofortepp_tilde} together
yield the proof of Proposition \ref{Rm},  and Proposition \ref{propo-glob-tsmall} follows.

\section{The oscillation bounds}\label{s:oll}
In this section we prove Theorem \ref{thm:main2}.

\medskip

\noindent{\em{Proof of Theorem \ref{thm:main2}.}}
Theorems 4.3 and 6.1 from \cite{CCS8} state that for $1\le \varrho <\infty$, the operators mapping $f \in L^1(\gamma_\infty)$ to the functions
\begin{equation}\label{op:oper1}
 \| \mathcal H_t f(x)\|_{v(\varrho), [1,+\infty)}, \quad x \in \mathbb R^n,
\end{equation}
and
\begin{equation}\label{op:oper2}
 \|{ \mathcal H}_t^{\mathrm{glob}} f(x)\|_{v(\varrho), (0,1]}, \quad x \in \mathbb R^n,
\end{equation}
 are both of weak type $(1,1)$ with respect to the measure $d\gamma_\infty$. 

Thus, \eqref{eq:vrho_orho} implies that
\begin{equation}\label{eq101}
\sup_{N\in \N_+} \sup_{ \{t_i \} }\| \mathcal O^\varrho_{ \{t_i\}, N} (\mathcal H_t f:\,t\ge 1)\|_{L^{1,\infty}(\gamma_\infty)} \le \|f\|_{L^1(\gamma_\infty)}
\end{equation}
and
\begin{equation}\label{eq102}
\sup_{N\in \N_+} \sup_{ \{t_i \} }\| \mathcal O^\varrho_{ \{t_i\}, N} (\mathcal H_t^{\mathrm{glob}} f:\,t\in (0,1])\|_{L^{1,\infty}(\gamma_\infty)} \le \|f\|_{L^1(\gamma_\infty)}.
\end{equation}

To conclude the proof, it suffices to prove a uniform  oscillation inequality for $\mathcal H_t^{\mathrm{loc}} f$ with $t\in (0,1]$.
In the sum in \eqref{Hloc} (which is locally finite, since the $\widetilde r_j $ have bounded overlap), we may deal with each term $\widetilde r_j (x) \, \mathcal H_t (f\,r_j)(x)$  separately. Thus we fix  $j \in \N$, and observe that it does not matter whether we use $d\gamma_\infty$ or Lebesgue measure as basic measure,  since they are equivalent in the support of $\widetilde r_j $.
 
We know from \cite[Proposition 6.1]{CCS9} that  the variation operator of $\widetilde r_j  \, \mathcal H_t (f\,r_j)$   in the interval 
 $\min(1,|x_j|^{-2}) < t \le 1$ is of strong type $(p,p)$ for all $p \ge 1$. As a consequence,  we immediately obtain a strong type $(1,1)$ oscillation inequality in the same interval,
and this is uniform in $j$.    

What remains to prove is thus
 a weak type $(1,1)$ inequality for the oscillation 
$ \mathcal{O}^2_{ \{t_i\}, N} (\widetilde{r}_j \mathcal H_t(f r_j))$ in the interval $ 0 < t \le \min(1, |x_j|^{-2}) $.
To do so, we follow the approach introduced in \cite[Section 7]{CCS9},  decomposing the kernels as there.

We define the  difference operators $\Delta_t^{(\kappa)}$ and the main operator $M_t$ as
\begin{align}\label{def_Deltat}
\Delta_t^{(\kappa)} g(x) &= \int_{\mathbb{R}^n} \left(\widetilde{K}_t^{(\kappa-1)}(x,u) - \widetilde{K}_t^{(\kappa)}(x,u)\right)\,\eta(x,u)\, g(u) \, d\gamma_\infty(u), \quad \kappa=1,2,3,\qquad\\
M_t g(x)& = \int_{\mathbb{R}^n} \widetilde{K}_t^{(3)}(x,u) \,\eta(x,u)\, g(u) \, d\gamma_\infty(u),\label{def_Mt}
\end{align}
where the kernels $\widetilde K_t^{(\kappa)}(x,u)$, for $\kappa=0,1,2,3$, are given, respectively, by
 \begin{align}
 \widetilde K_t^{(0)}(x,u)&= K_t(x,u),\notag\\
 \widetilde K_t^{(1)}(x,u)&= \Big(\frac{\det Q_\infty}{\det Q_t}\Big)^{1/2}\, e^{R(x)}\, \exp\Big( -\frac12\, \langle (Q_t^{-1} -Q_\infty^{-1})(u- x), \,u- x \rangle \Big) ,\label{def:tildeKt} \qquad\\
 \widetilde K_t^{(2)}(x,u)&= \Big(\frac{\det Q_\infty}{\det Q_t}\Big)^{1/2}\, e^{R(x)}\, \exp\left( -\frac1{2t}\, \big|Q^{-1/2}(u- x)\big|^2 \right), \label{def:tildeKt_two} \\
 \widetilde K_t^{(3)}(x,u)&= \frac{(\det Q_\infty)^{1/2}}{ (\det Q)^{1/2}\, t^{n/2}}\, e^{R(x)}\, \exp\left( -\frac1{2t}\, \big|Q^{-1/2}(u- x)\big|^2 \right) . \label{def:tildeKt_three} 
 \end{align}
 Then we have
\begin{align}\label{eq:sum_Ht}
\widetilde r_j (x) \, \mathcal H_t (f\,r_j)(x) =
  \widetilde r_j(x) \left( \sum_{\kappa=1}^3 \Delta_t^{(\kappa)} (f r_j)(x) + M_t (f r_j)(x) \right).
\end{align}

Propositions 6.1 and 8.4 in \cite{CCS9} together 
establish that for any $1\le \varrho <\infty$, 
the variation operator associated with $\widetilde r_j(x) \Delta_t^{(\kappa)} (f r_j)(x)$ for $t\in (0,1]$ 
is of strong type $(p,p)$ for all $p \ge 1$ 
with respect to the invariant (and Lebesgue) measure, uniformly in $j$, for  $\kappa=1,2,3$. 
Once again, in the light of \eqref{eq:vrho_orho}, the same then holds for the corresponding oscillation.

We are therefore left with the oscillation estimate for 
 $\widetilde{r}_j M_t(f r_j)$
 in $0 < t \le \min(1, |x_j|^{-2})$.

\begin{proposition}\label{prop:Mt}
Let $j \in \mathbb{N}$ and $1<p<\infty$. Then the following uniform oscillation inequalities hold:
\begin{equation}\label{stima_pp_Mt}
\sup_{N\in \N_+} \sup_{ \{t_i \} }  \|\mathcal O^2_{ \{t_i\}, N} (\widetilde r_j M_t(f r_j): 0 < t \le \min(1, |x_j|^{-2}))\|_{L^p(\gamma_\infty)} \lesssim
 \|f r_j\|_{L^p(\gamma_\infty)}, 
\end{equation}
 and
 \begin{equation}\label{stima_1inf_Mt}
\sup_{N\in \N_+} \sup_{ \{t_i \} } \|\mathcal{O}^2_{ \{t_i\}, N} \big(\widetilde{r}_j M_t(f r_j): 0 < t \le \min(1, |x_j|^{-2}))\|_{L^{1,\infty}(\gamma_\infty\big)}
 \lesssim \|f r_j\|_{L^1(\gamma_\infty)}.
\end{equation}
 The implicit constants are uniform in $j$.

\end{proposition}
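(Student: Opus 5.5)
The kernel $\widetilde K_t^{(3)}(x,u)$ is, after multiplication by the Gaussian density $d\gamma_\infty(u)=(2\pi)^{-n/2}(\det Q_\infty)^{-1/2}e^{-R(u)}du$, a Euclidean heat kernel. Precisely,
\[
\widetilde K_t^{(3)}(x,u)\,d\gamma_\infty(u)= e^{R(x)-R(u)}\, p_t(x-u)\,du,
\qquad
p_t(z)=(2\pi t)^{-n/2}(\det Q)^{-1/2}\exp\Big(-\tfrac{1}{2t}|Q^{-1/2}z|^2\Big).
\]
Here $p_t$ is the kernel of the classical heat semigroup $W_t=e^{\frac t2\,\mathrm{div}(Q\nabla)}$ on $\R^n$ with Lebesgue measure. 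On the support of $\widetilde r_j(x)\,r_j(u)$ we have $x\in 6B_j$ and $u\in 4B_j$, and by \eqref{density} the factor $e^{R(x)-R(u)}$ is comparable to $1$ there. It is, however, not constant, so it cannot simply be pulled out. I would remove it by an exact factorization. Set $g=f r_j e^{-R}$; then
\[
\widetilde r_j(x)\,M_t(f r_j)(x)= c_{n,Q}\,\widetilde r_j(x)\, e^{R(x)}\, W_t g(x)
\]
for a harmless constant $c_{n,Q}$. Note that $\eta(x,u)=1$ on the relevant support by the construction of $\eta$ (indeed $\widetilde r_j(x)r_j(u)$ is one of the summands and the others combine so that we may drop $\eta$; if not, one keeps $\eta\le 1$ and treats the localization via the $r_j$ cutoff directly). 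The factor $\widetilde r_j(x)e^{R(x)}$ does not depend on $t$, so it factors out of the oscillation:
\[
\mathcal O^2_{\{t_i\},N}\big(\widetilde r_j M_t(fr_j)\big)(x)=c\,\widetilde r_j(x)e^{R(x)}\,\mathcal O^2_{\{t_i\},N}(W_t g)(x).
\]

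Next I pass to Lebesgue measure. On $\operatorname{supp}\widetilde r_j\subset 6B_j$ we have $d\gamma_\infty\simeq e^{-R(x_j)}dx$ and $e^{R(x)}\simeq e^{R(x_j)}$. Hence
\[
\big\|\widetilde r_j e^{R}\,\mathcal O^2(W_tg)\big\|_{L^{p}(\gamma_\infty)}\simeq e^{R(x_j)(1-1/p)}\,\|\mathcal O^2(W_tg)\|_{L^p(6B_j,dx)},
\]
and the same comparison holds in $L^{1,\infty}$ with the exponent $1-1/p$ replaced by $0$. On the other side,
\[
\|g\|_{L^p(dx)}\simeq e^{-R(x_j)}\,e^{R(x_j)/p}\,\|fr_j\|_{L^p(\gamma_\infty)},
\]
since $g$ is supported in $4B_j$. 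So the powers of $e^{R(x_j)}$ cancel exactly, uniformly in $j$. This reduces both \eqref{stima_pp_Mt} and \eqref{stima_1inf_Mt} to uniform second-order oscillation inequalities for the Euclidean heat semigroup $(W_t)_{t>0}$: strong type $(p,p)$ for $1<p<\infty$ and weak type $(1,1)$, on $L^p(\R^n,dx)$. In these, the time interval may be enlarged to $(0,\infty)$, since oscillation over a subinterval is dominated by the one over the full sequence restricted to it.

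For the heat semigroup itself I would invoke known results. The strong type $(p,p)$ oscillation at $\varrho=2$ follows from the symmetric-diffusion theory of Jones and Reinhold \cite{JR} (Rota's theorem), or from Le Merdy and Xu \cite{Le Merdy}. The weak type $(1,1)$ at $\varrho=2$ is H.~Liu's result \cite{Liu} on oscillation of (heat) semigroups/convolution operators. A linear change of variables $x\mapsto Q^{-1/2}x$ transforms $W_t$ into the standard heat semigroup and preserves $L^p$ and $L^{1,\infty}$ norms up to $\det Q$ factors, so the anisotropy causes no trouble. The oscillation operator for a fixed sequence $\{t_i\}$ is vector-valued and Calderón–Zygmund-like, and Liu's bound is uniform over sequences, which is exactly what the $\sup$ requires.

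The main obstacle is making sure the cited weak type $(1,1)$ result genuinely covers $\varrho=2$ oscillation for the continuous-time heat semigroup uniformly over all finite sequences. If \cite{Liu} is stated only for dyadic or discrete averages, I would fall back on a standard argument. First, split into dyadic short oscillations (controlled by a square function of $t\partial_tW_t$, which is weak $(1,1)$ by Calderón–Zygmund theory). Second, treat the long oscillation along $t_i$, where one proves $\ell^2$-valued Calderón–Zygmund kernel estimates for $x\mapsto(\sup_{t_{i-1}\le t\le t_i}|p_t-p_{t_i}|)_i$ and combines them with the $L^2$ bound. All constants in this route are independent of $j$.
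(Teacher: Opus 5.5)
Your proposal is essentially the paper's proof. It uses the same exact factorization $\widetilde r_j M_t(fr_j)=c\,\widetilde r_j\, e^{R}\,\big(\psi_t*(fr_je^{-R})\big)$, with the $t$-independent factor pulled out of the oscillation. It uses the same comparisons $e^{R}\simeq e^{R(x_j)}$ and $d\gamma_\infty\simeq e^{-R(x_j)}dx$ on $6B_j$, so the exponential factors cancel uniformly in $j$. It then appeals to Liu's Theorem 2.7; the paper uses Liu for both the strong and weak type bounds, while you quote Jones--Reinhold or Le Merdy--Xu for the strong one.

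One correction concerns $\eta$. It is not true that $\eta(x,u)=1$ for all $x\in 6B_j$, $u\in 4B_j$; Lemma~\ref{lemma3.1} only gives this when $|x-u|<A/(4(1+|x|))$. The fallback $0\le\eta\le 1$ also cannot control an oscillation. The right justification is that the identity \eqref{eq:sum_Ht} only holds when the kernels of $\Delta_t^{(\kappa)}$ and $M_t$ are taken without $\eta$, and that is exactly how the paper writes $M_t(fr_j)$ as a convolution.
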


\begin{proof}
We fix $j \in \N$ and let $(t_i)_{i=0}^N$ be an arbitrary finite increasing sequence in $(0, \min(1, |x_j|^{-2})]$. 
In this proof, 
we will omit the restriction $0 < t \le \min(1, |x_j|^{-2})$
 inside the oscillation  and simply write $\mathcal{O}^2_{\{t_i\}, N}(\widetilde{r}_j M_t(f r_j))$.

Exactly as in \cite[Proposition 9.1]{CCS9}, for $t>0$ one defines the kernel
\[
\psi_t(y)=\Big(\frac{
 \det Q_\infty}{\det Q}\Big)^{1/2}\,  
{t^{-n/2}}\,\exp\big(-\frac1{2t}\,|Q^{-1/2}\,y|^2\big), \qquad y\in\R^n.
\]
             Then
\begin{align*} 
 M_{t}(f\,r_j)(x)&=   e^{R(x)}
\Big( \psi_t* \big(fr_je^{-R(\cdot)}\big)\Big)(x).
\end{align*}
 for $f \in L^1(\gamma_\infty)$, and  
\begin{align}
\mathcal O^2_{ \{t_i\}, N} (\widetilde{r}_j M_t(f r_j))(x) 
&\lesssim \mathbbm{1}_{6B_j}(x) \,e^{R(x)}\, \mathcal O^2_{ \{t_i\}, N} (\psi_t * (f r_j e^{-R(\cdot)}))(x).\label{ineq:Liu_Osc}
\end{align}
We recall that  $e^{R(x)}\simeq e^{R(x_j)}$ for  $x \in 6 B_j \supset \mathrm{supp} \, \widetilde r_j $.

 Aiming at \eqref{stima_1inf_Mt}, we see that  for any $\alpha>0$,
\begin{align*}
& \gamma_\infty\left\{ x :\,
\mathcal O^2_{ \{t_i\}, N} (\widetilde{r}_j M_t(f r_j))(x)>\alpha\right\}\\
 & \le
\gamma_\infty\left\{ x\in 6 B_j:\,   
e^{R(x_j)}\, \mathcal O^2_{ \{t_i\}, N} (\psi_t * (f r_j e^{-R(\cdot)}))(x)>c\alpha \right\}    
 \\&\simeq
 e^{-R(x_j)}
\left|\left\{ x\in 6 B_j:\,  
 \,\mathcal O^2_{ \{t_i\}, N} (\psi_t * (f r_j e^{-R(\cdot)}))(x)> \, c\, e^{-R(x_j)}\, \alpha \right\}\right|.
\end{align*}
            
Now we  apply  Theorem 2.7 in \cite{Liu} to the last expression, getting 
\begin{align*}
 \gamma_\infty&\left\{ x :\,
\mathcal O^2_{ \{t_i\}, N} (\widetilde{r}_j M_t(f r_j))(x)>\alpha\right\}\\
&\qquad\qquad \lesssim  \,
\frac{ e^{-R(x_j)}}{ e^{-R(x_j)}\,\alpha} \,
\left\|\psi_t *\big( fr_je^{-R(\cdot)}\big)\right\|_{L^1(du)}
 \lesssim \frac{1}{\alpha}\, \|f\,r_j\|_{L^1(\gamma_\infty)},
 \end{align*}
where, exactly as in \cite[Proposition 9.1]{CCS9}, we also exploited 
the fact that convolution with $\psi_t$ defines a bounded operator on $L^1(du)$. 
 Thus
  \[\|\mathcal{O}^2_{ \{t_i\}, N} (\widetilde{r}_j M_t(f r_j))\|_{L^{1,\infty}(\gamma_\infty)} \lesssim \|f r_j\|_{L^1(\gamma_\infty)}.\]
We can now take the supremum over the sequences  $(t_i)_{i=0}^N$, 
 and \eqref{stima_1inf_Mt} is proved.

Next, we prove \eqref{stima_pp_Mt} for $1<p<\infty$. We start from \eqref{ineq:Liu_Osc} and move to Lebesgue measure, thus obtaining
\begin{align*} 
\norm{\, \mathcal O^2_{ \{t_i\}, N} (\widetilde{r}_j M_t(f r_j)) \,}^p_{L^p(\gamma_\infty)}
&\lesssim \int_{6B_j}\Big|
{ \,e^{R(x)}\, \mathcal O^2_{ \{t_i\}, N} (\psi_t * (f r_j e^{-R(\cdot)}))(x)
}\Big|^p \,{ d\gamma_\infty}(x)
\\
&\simeq 
e^{(p-1) R(x_j)}
\, 
 \int_{6B_j}
 \Big| 
{\,\mathcal O^2_{ \{t_i\}, N} (\psi_t * (f r_j e^{-R(\cdot)}))(x)
}\Big|^p \,dx.
 \end{align*}
From  \cite[Theorem 2.7]{Liu}  we conclude that
\begin{align*} 
\norm{\, \mathcal O^2_{ \{t_i\}, N} (\widetilde{r}_j M_t(f r_j)) \,}^p_{L^p( \gamma_\infty)}
\lesssim
e^{(p-1) R(x_j)}
\,  \int_{\mathbb{R}^n} |f(x) r_j(x)|^p\, e^{-pR(x)}\, dx
\simeq
\| f\,r_j \|_{L^p(\gamma_\infty)}^p,
 \end{align*}
uniformly in $ \{t_i\}$, $N$  and $j$.
This completes the proof of  \eqref{stima_pp_Mt} and that of the proposition.
 \end{proof}

 \section{Failure of oscillation and jumps for $1\le\varrho<2$}\label{s:Qian}

\begin{proposition}\label{prop:osc_failure_rho}
    Let $1 \le \varrho < 2$. The uniform $\varrho$-oscillation estimate in $L^p(\gamma_\infty)$ with $1\le p< \infty$ fails for the semigroup  $(\mathcal H_t)$. 
   More specifically, for each large integer $N$ we can construct a function $f_N \in L^p(\gamma_\infty)$ and a decreasing sequence $(t_i)_0^N$ in $(0,1]$ such  that 
   $$ \frac{\|\mathcal{O}^\varrho_{\{t_i\},N} (\mathcal H_t\ f_N):\,t\in (0,1]\|_{L^p(\gamma_\infty)}}
   {\|f_N\|_{L^p(\gamma_\infty)}} \to \infty
  $$
   as $N \to \infty$.
  The weak type $(1,1)$ inequality for $\varrho$-oscillation also fails.
\end{proposition}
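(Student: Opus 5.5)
Since the obstruction for $\varrho<2$ lives in the local region at small times, where $\mathcal H_t$ behaves like a Gaussian heat-type convolution with an essentially constant density, I would reduce to Qian's one-dimensional Rademacher counterexample, transplanted near the origin and tensorised in the remaining variables. Concretely, I would work in the ball $\frac12 B_0$, where $e^{R}\simeq 1$, and use the comparison from Section \ref{s:oll}: for $t$ small and $x$ in a fixed compact set, $\mathcal H_t f(x)$ differs from $e^{R(x)}\,\psi_t*(fe^{-R})(x)$ by the difference operators $\Delta^{(\kappa)}_t$. By \cite[Propositions 6.1 and 8.4]{CCS9} these have $\varrho$-variations (hence $\varrho$-oscillations, by \eqref{eq:vrho_orho}) bounded on every $L^p$ and of weak type $(1,1)$, for every $\varrho\ge1$. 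The global part and the large times are also bounded, by Section \ref{s:smallt_global} and Corollary \ref{c:tlarge}, together with their weak $(1,1)$ counterparts \eqref{eq101}, \eqref{eq102}. It therefore suffices to produce functions $f_N$ supported in $\frac14B_0$ for which the $\varrho$-oscillation of the pure convolution $\psi_t*f_N$ on the sampled times is large, with the ratio to $\|f_N\|_p$ tending to infinity. Since the bounded pieces contribute at most $C\|f_N\|_p$, the blow-up then transfers to $\mathcal H_t$.

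For the construction, after the linear change of variables $y\mapsto Q^{-1/2}y$ I would treat $\psi_t$ as the standard heat kernel. Following Qian, I would take $f_N(x)=\phi(x)\,g_N(x_1)$, where $\phi$ is a smooth cutoff and $g_N(s)=\sum_{k=1}^N r_k(s)$ is a sum of Rademacher-type functions $r_k(s)=\operatorname{sign}\sin(2\pi 2^{k}s)$, or lacunary cosines $\cos(2\pi 4^k s)$. I would then choose the decreasing times $t_k\simeq 4^{-2k}$ so that heat smoothing at time $t_k$ kills $r_j$ for $j<k$ up to $O(e^{-c})$ errors and essentially preserves $r_j$ for $j>k$. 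Each oscillation term $\sup_{t_{k-1}\le t\le t_k}|\psi_t*f_N-\psi_{t_k}*f_N|$ is then bounded below by $c|r_k(x)|\simeq c$ on most of the support, up to tails. This gives $\mathcal O^\varrho\gtrsim N^{1/\varrho}$ pointwise on a set of measure $\simeq1$. On the other hand, Khintchine's inequality gives $\|f_N\|_{L^p}\simeq N^{1/2}$, so the ratio is at least $N^{1/\varrho-1/2}\to\infty$ because $\varrho<2$. For the weak type $(1,1)$ statement the same example works: $\|f_N\|_1\simeq N^{1/2}$, while the oscillation exceeds $cN^{1/\varrho}$ on a set of measure $\gtrsim1$, which contradicts any weak $(1,1)$ bound.

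The step I expect to be the main obstacle is the lower bound on each oscillation term uniformly over most $x$. The errors coming from the other frequencies must be summed: the $j>k$ terms are almost unchanged between $t_{k-1}$ and $t_k$, and the $j<k$ terms are already damped. The total error has to remain $\le c/2$ on a large set, not merely on average. I would handle this with a square-function or Khintchine bound on the error sum, $\sum_{j\ne k}e^{-c4^{|j-k|}}$, which is geometrically small, followed by Chebyshev to discard a small exceptional set. Taking the supremum at $t=t_{k-1}$ makes each term a fixed linear combination, which makes this estimate clean. Finally, the errors from replacing $\mathcal H_t$ by the convolution are controlled in $L^p$ and weakly by $C\|f_N\|\lesssim N^{1/2}$, which is negligible against $N^{1/\varrho}$ on a set of measure $\gtrsim1$.
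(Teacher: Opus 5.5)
Your reduction is logically sound. Localizing near the origin and peeling off $\Delta^{(\kappa)}_t$, the global part and the times $t\ge c_0$ does transfer a lower bound from the heat-type term $M_t$ to $\mathcal H_t$, because the remainder costs only $O(\|f_N\|)=O(N^{1/2})$ in $L^p$ or $L^{1,\infty}$. The gap is in the core construction, where the scale separation you rely on fails as stated.

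First, the direction is reversed. Heat smoothing at time $t$ damps frequencies $|\xi|\gg t^{-1/2}$ and preserves $|\xi|\ll t^{-1/2}$. With decreasing $t_k$ and increasing frequencies, it is the terms $j>k$ that are killed at time $t_k$, and the terms $j<k$ that survive.

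Second, your parameters do not separate the scales. For $\cos(2\pi 4^k s)$ with $t_{k-1}/t_k\simeq 16$, the $k$-th term cannot be both essentially preserved at $t_k$ and essentially killed at $t_{k-1}$, since that would need $t_k4^{2k}\ll 1\ll 16\,t_k4^{2k}$. Moreover, the coefficients of the neighbours $j=k\pm1$ in the increment stay a fixed fraction of the $k$-th coefficient, whatever constant you put in $t_k\simeq 4^{-2k}$. So the error $\sum_{j\neq k}$ is not "geometrically small". For $r_k=\operatorname{sign}\sin(2\pi 2^k s)$ the scales do not even match: $\sqrt{t_k}\simeq 4^{-k}$, so the terms that make the transition in $[t_k,t_{k-1}]$ are those with $j\approx 2k$, not $r_k$. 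Hence the bound "increment $\ge c|r_k|$" does not follow.

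There is also a counting issue. A per-$k$ Chebyshev estimate with errors of fixed size does not give one set of measure $\simeq 1$ on which the increments are large, because the union bound over $k$ costs a factor $N$. You need either $L^2$ errors $\ll N^{-1/2}$, or an explicit averaging step showing that on a set of measure $\gtrsim 1$ at least $N/2$ of the increments are large.

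Both problems disappear if you make the frequencies sufficiently lacunary, with a gap allowed to depend on $N$. For instance, take $\xi_k/\xi_{k-1}=M$ and $t_k=\xi_k^{-2}/M$ with $M$ large. This is exactly what the paper's proof does, abstractly and directly for $\mathcal H_t$ on the unit cube $Q_1$:
\begin{itemize}
\item it picks $k_\ell$ so large that $\|\mathcal H_{t_i}g_{k_\ell}\|_{L^2}<\delta$ for all earlier $t_i$, using that $\|\chi_1\mathcal H_t g_k\|_{L^2}\to 0$ as $k\to\infty$ for fixed $t$;
\item it then picks $t_\ell$ so small that $\|\mathcal H_{t_\ell}g_{k_j}-g_{k_j}\|_{L^2}<\delta$ for all $j\le \ell$, by strong continuity;
\item it takes $\delta\simeq N^{-3/2}$, so a plain union bound gives a set of measure $\ge 1/2$ on which all $N$ increments exceed $1/2$.
\end{itemize}
That argument needs no comparison with the heat kernel and none of the boundedness results for $\Delta^{(\kappa)}_t$, the global part or large times. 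Your reduction, while valid, is therefore unnecessary once the recursive choice of $(k_\ell,t_\ell)$ is used.
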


 Clearly, it does not matter whether the sequence $(t_i)_0^N$  is increasing or decreasing.

\begin{proof}
Let $Q_1$ be the unit cube $[0,1]^n$ in $\R^n$, and $\chi_1$ its characteristic function.
We will use the 
 Rademacher functions in $[0,1]$, defined by 
\begin{equation*}
  r_k = \sum_{j=1}^{2^{k-1}} \left( \chi_{[(2j-2)2^{-k },\;(2j-1)2^{-k })} -  \chi_{[(2j-1)2^{-k },\; 2j2^{-k })} \right), \qquad k=1,2,\dots.
\end{equation*}
Set 
$$
g_k(u) = \chi_1(u)\, r_k(u_1),  \quad u \in \R^n.
$$
Fix a large integer $N$. We  will estimate the oscillation of $\mathcal H_t f_N$, where     
$$
f_N(u) = \sum_{\ell = 1}^N g_{k_\ell}(u) ,  \quad u \in \R^n.
$$
Here $(k_\ell)_1^N$ is an increasing sequence to be defined recursively. We also associate to each frequency $k_\ell$ a suitable time $t_\ell \in (0,1]$, such that $t_1 > t_2 > \dots > t_N$.

Our proof relies on the following two properties of the semigroup $\mathcal H_t$.
\begin{enumerate}
    \item Let $f \in L^2(\gamma_\infty)$. 
    Then $\|\mathcal H_t f - f\|_{L^2 (\gamma_\infty)}\to 0$ as $t \to 0^+$. In particular,  $(\mathcal H_t f - f)\,\chi_1$ tends to 0 in $ L^2(dx)$
                       \item Let $t>0$ be fixed. Then $\|\chi_1\,\mathcal H_t g_k\|_{L^2(dx)} \to 0$ as  $k \to \infty$.
      \end{enumerate}

Here item (1) is clear, since the semigroup is strongly continuous. Item (2) follows from the sign changes of $r_k$ at the points $m2^{-k}, \; m= 1,\dots, 2^{-k}-1$ that imply for $x \in Q_1$
$$
|\mathcal H_t r_k(x)| \le 2^{-k} \sup_{x,u \in Q_1} |\partial_{u_1} K_t(x,u)|,
$$
and the supremum here is finite.

We denote by $\delta$  a positive number determined later. By $L^2$ we will mean $L^2(Q_1;dx)$ 
in the rest of this proof. Since $f_N$ is supported in $Q_1$, and 
we will consider the values of $\mathcal H_t f_N$ only in  $Q_1$, we can use Lebesgue measure instead of $d\gamma_\infty$.

The recursion is started with $k_0 = t_0 = 1$ (though $k_0$ is not used).

Assuming now that $k_0, t_0, \dots, k_{\ell-1}, t_{\ell-1}$ have already been chosen for some $\ell \ge 1$, we proceed as follows.

First, by item (2), we can choose $k_\ell > k_{\ell-1}$ so large that 
    \[\max_{0 \le i \le \ell-1} \|\mathcal H_{t_i} g_{k_\ell}\|_{L^2} < \delta.\] 
    This is possible because the previous times $t_1, \dots, t_{\ell-1}$ are fixed.

 Next, by item (1), we choose $t_\ell \in (0, t_{\ell-1})$ so small that 
 $$
    \max_{1 \le j \le \ell} \|\mathcal H_{t_\ell} g_{k_j} - g_{k_j}\|_{L^2} < \delta. 
    $$
    This is possible because the frequencies $k_1, \dots, k_\ell$ are now fixed.
    
 The recursion stops with $k_N$ and $ t_N$.  
 Then  the following two properties will be satisfied
for all $ \ell \in \{1, \dots, N\}$   and all $i \in \{0, 1, \dots, N\}$:

\begin{eqnarray}   
  (a) \quad     \text{if } \ell > i, \quad & \|\mathcal H_{t_i} g_{k_\ell}\|_{L^2} < \delta. \label{ett} \\
      (b) \quad      \text{if } \ell \le i, \quad & \|\mathcal H_{t_i} g_{k_\ell} - g_{k_\ell}\|_{L^2} < \delta. \label{tva}
\end{eqnarray}

We consider for each $i = 1,\dots,N$ and $x \in Q_1$ the differences
\begin{equation} % ment_decomposition}
   \mathcal H_{t_i} f_N(x) - \mathcal H_{t_{i-1}} f_N(x) =\sum_{\ell = 1}^N (\mathcal H_{t_i} g_{k_\ell}(x) - \mathcal H_{t_{i-1}} g_{k_\ell}(x))= I +II + III.
\end{equation}
where
$$
I =\sum_{\ell = 1}^{i-1} (\mathcal H_{t_i} g_{k_\ell} - \mathcal H_{t_{i-1}} g_{k_\ell}),
$$
$$
II = \mathcal H_{t_i} g_{k_i} 
$$
and
$$
III =  -  \mathcal H_{t_{i-1}} g_{k_i} + \sum_{\ell = i+1}^N \mathcal H_{t_i} g_{k_\ell} -   \sum_{\ell = i+1}^N  \mathcal H_{t_{i-1}} g_{k_\ell}.
$$
Here we omit the argument $x$.
Observe that the sum $I$ vanishes for $i=1$, and that the time $t_0$ appears only in  $III$ when    $i=1$.

To deal with $I$, we rewrite it as 
$$ 
I =\sum_{\ell = 1}^{i-1} (\mathcal H_{t_i} g_{k_\ell}- g_{k_\ell}) -  \sum_{\ell = 1}^{i-1}  (\mathcal H_{t_{i-1}} g_{k_\ell}- g_{k_\ell}).
$$
Since  $(b)$        
applies to each term here, it follows that $\|I\|_{L^2}  \le 2(i-1)\delta \le 2N\delta$.

As for the sum $III$, notice that $(a)$        
is applicable to each of the terms, and thus $\|III\|_{L^2}   < (2N+1)\delta$.

Finally  $(b)$                                  
implies that $\|II -g_{k_i}\|_{L^2}  < \delta$.

Letting
\begin{equation}  
    \mathcal{E}_i(x) = \mathcal H_{t_i} f_N(x) - \mathcal H_{t_{i-1}} f_N(x) -g_{k_i}(x),
       \end{equation}
we can sum this up as
\begin{equation} 
    \|\mathcal{E}_i\|_{L^2} < (4N+2)\delta. 
\end{equation}

Choosing now  $\delta = 1/(2(4N+2)\sqrt{2N})$,
we obtain from Chebyshev's inequality in $L^2$ that for each $i$
\begin{equation} 
    \left|\left\{x \in Q_1: |\mathcal{E}_i(x)| > \frac12 \right\}\right| \le \frac{(4N+2)^2\delta^2}{1/4} = \frac1{2N}.
\end{equation}
Considering the complementary sets, we conclude that the set
\begin{equation}
  S =  \left\{x \in Q_1: |\mathcal{E}_i(x)| \le \frac12, \;\;\;i = 1,\dots, N \right\}\
\end{equation}
has measure at least 1/2.  For $x \in S$ one has from the triangle inequality
\begin{equation} 
   |\mathcal H_{t_i} f_N(x) - \mathcal H_{t_{i-1}} f_N(x)| \ge |g_{k_i}(x)| - |\mathcal{E}_i(x)| \ge 1/2,   \qquad i = 1,\dots, N,
   \end{equation}
   the last inequality since $|g_{k_i}| = 1$ in $Q_1$. 
   
   For the $\varrho$-oscillation of $\mathcal H_{t} f_N$ with the sequence $(t_i)_0^N$ we then get 
\begin{equation} 
\mathcal O^\varrho_{ \{t_i\}, N}  ( \mathcal H_t (x) f_N:\,t\in (0,1]) \ge \left( \sum_{1}^{N} |\mathcal H_{t_i} f_N(x) - \mathcal H_{t_{i-1}} f_N(x)|^\varrho\right)^{1/\varrho}  \ge \frac12\,N^{1/\varrho},
 \end{equation}
 when  $x \in S$. Now take the $L^p$ norm in $Q_1$ with $1 \le p<\infty$, getting 
\begin{equation} 
\|\mathcal O^\varrho_{ \{t_i\}, N}  ( \mathcal H_t (x) f_N:\,t\in (0,1])\|_{L^p} \gtrsim N^{1/\varrho}.
 \end{equation}
For $\varrho < 2$  and large $N$ this is much larger than the $L^p$ norm of $f_N$, since Khinchine's inequality shows that $\|f_N\|_{L^p} \simeq N^{1/2}$.
Thus there can be no $L^p$ oscillation inequality for $\mathcal H_t$ with $\varrho < 2$. 
A weak type $(1,1)$ inequality is also impossible, since the oscillation is larger than $N^{1/\varrho}/2$ on a set of measure at least $1/2$.
\end{proof}

\begin{remark}\label{remark:jumps_failure_Ht}
As pointed out in items (3') and (4') in Subsection \ref{subs:jump},
 the uniform $\varrho$-jump bounds fail for the full Ornstein--Uhlenbeck semigroup 
 $(\mathcal{H}_t)$ when $1 \le \varrho < 2$. This failure can be proved via an abstract argument originally due to Bourgain (see also \eqref{ineq:Mariusz}). 

However, using the  family of functions $(f_N)$
constructed for the oscillation in the proof of Proposition \ref{prop:osc_failure_rho} above, we can show this failure in a much more direct way. On the  set $S$ used above, the sequence $(\mathcal{H}_{t_i} f_N(x))_{i=1}^N$ exhibits $N$ consecutive jumps of size at least $1/2$. Consequently, for the threshold $\lambda = 1/4$, the jump counting function satisfies $N_{1/4}(\mathcal{H}_t f_N(x) : t \in (0,1]) = N$. It follows that
\begin{equation}
    J_\varrho(\mathcal{H}_t f_N : t \in (0,1])(x) = \sup_{\lambda > 0} \lambda \big(
     N_\lambda(\mathcal{H}_t f_N(x) : t \in (0,1]) \big)^{1/\varrho} \ge \frac{1}{4} N^{1/\varrho}.
\end{equation}
Taking the $L^p$ norm, we have
\[ 
    \frac{ \|J_\varrho (\mathcal{H}_t f_N : t \in (0,1])\|_{L^p}}{\|f_N\|_{L^p}} 
 \gtrsim N^{\frac{1}{\varrho} - \frac{1}{2}}, 
\qquad 1 \le p < \infty, 
\]
which diverges as $N \to \infty$ since $\varrho < 2$.
\end{remark}

\section{Moving from Gaussian to Lebesgue measure}\label{s:Leb}
We conclude by discussing  briefly what occurs when we replace the invariant measure 
in $\R^n$
with the standard 
 Lebesgue measure.
Since the semigroup $(\mathcal H_t)_{t>0}$ is not analytic in
$L^p(dx) := L^p(\R^n, dx)$, a  large part of the tools 
our proofs rely on  
are no longer available  ({\em in primis,}
Corollary 4.5 in \cite{Le Merdy}, 
which guarantees the
$L^p(\gamma_\infty)$
boundedness 
of the $\varrho$-th order variation   of $(\mathcal H_t)_{t>0}$ 
 for $\varrho>2$).
 
 Indeed, the following result holds.

\begin{proposition}
  For  $1 \le \varrho < \infty$ and
 $1 \le p < \infty$, 
   the maximal, variational, and jump operators
   and 
the oscillation associated with the Ornstein--Uhlenbeck semigroup 
are neither strongly nor weakly bounded on
 $L^p(dx) $.
\end{proposition}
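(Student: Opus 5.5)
Since all the quantities dominate simpler ones, the plan is to show that already the maximal operator fails, and then transfer the failure. Every variational, jump and oscillation quantity on $(0,\infty)$ controls $\sup_{t\ge 1}|\mathcal H_t f(x)-\mathcal H_1 f(x)|$ or $\sup_{t\ge t_0}|\mathcal H_t f(x)-\mathcal H_{t_0}f(x)|$, up to a constant. For the variation and the oscillation this uses a single pair of times. For the jumps take $\lambda$ just below the size of one difference, so that $N_\lambda\ge 1$.

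I would therefore test on functions for which $\mathcal H_t f(x)$ converges, as $t\to\infty$, to a constant $c_f$ for all $x$, while $\mathcal H_{t_0}f(x)$ is small for large $|x|$. This makes a single difference $|\mathcal H_T f(x)-\mathcal H_{t_0}f(x)|$ of size $\gtrsim c_f$ on a set of infinite Lebesgue measure.

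\textbf{Construction.} Take $f=\chi_{B(0,1)}\ge 0$, which lies in every $L^p(dx)$. By the Kolmogorov formula \eqref{Kolmo}, $\mathcal H_t f(x)=\gamma_t\big(e^{tB}x-B(0,1)\big)$. As $t\to\infty$ we have $e^{tB}x\to 0$ locally uniformly in $x$ (since $\|e^{tB}\|\lesssim e^{-ct}$) and $\gamma_t\to\gamma_\infty$ weakly. Hence for each fixed $x$, and uniformly on any ball $|x|\le R$, we get $\mathcal H_t f(x)\to \gamma_\infty(B(0,1))=:c_1>0$. More precisely, for $|x|\le R$ and $t\ge T(R)$, with $T(R)\simeq \log R$, one has $\mathcal H_t f(x)\ge c_1/2$. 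On the other hand, at the fixed time $t_0=1$ the measure $\gamma_1$ has Gaussian tails, so $\mathcal H_1 f(x)\le \gamma_1\big(\{|y|\ge |e^{B}x|-1\}\big)\to 0$ as $|x|\to\infty$, because $|e^{B}x|\simeq|x|$. Thus on the annulus $\{R/2\le |x|\le R\}$ with $R$ large, for $t=T(R)$,
\[
|\mathcal H_{T(R)}f(x)-\mathcal H_1 f(x)|\ge c_1/4 .
\]

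\textbf{Consequences.} Since $R$ is arbitrary, $\sup_{t\ge1}|\mathcal H_t f-\mathcal H_1 f|\ge c_1/4$ on $\{|x|\ge R_0\}$, a set of infinite Lebesgue measure. Hence this quantity is not in $L^{p,\infty}(dx)$ for any $p<\infty$, while $\|f\|_{L^p(dx)}<\infty$. The same bound gives the failure for each remaining operator:
\begin{itemize}
\item for the maximal function, $\sup_t|\mathcal H_t f|\ge c_1/2$ on the same set;
\item for the $\varrho$-variation, use the two points $1<T(|x|)$;
\item for the oscillation, take $N=1$ and the sequence $t_0=1$, $t_1=T(R)$, since the supremum over $[t_0,t_1]$ dominates the endpoint difference;
\item for the jumps, take $\lambda=c_1/8$, so that $N_\lambda\ge1$ and $\lambda N_\lambda^{1/\varrho}\ge c_1/8$.
\end{itemize}
For the oscillation the sequence must be fixed independently of $x$. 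I would handle this by restricting to the annulus and choosing $t_1=T(R)$ there, which already yields measure $\simeq R^n\to\infty$ at a uniform level. That suffices to contradict the weak-type bound $\lambda^p|\{\cdot>\lambda\}|\le C\|f\|_p^p$ for each $p$.

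\textbf{Main obstacle.} The key step is the quantitative uniform lower bound $\mathcal H_t f(x)\ge c_1/2$ for $|x|\le R$ and $t\ge T(R)$. I would prove it directly from \eqref{Kolmo}: write $\gamma_t$ as the image of the standard Gaussian under $Q_t^{1/2}$, note that $Q_t\to Q_\infty$, and shift the ball by $e^{tB}x$ with $|e^{tB}x|\le Ce^{-ct}R\le 1/2$. Then $\gamma_t\big(e^{tB}x-B(0,1)\big)\ge \gamma_t(B(0,1/2))\ge c>0$ for all $t\ge1$, uniformly in $t$. This even avoids the limit argument.
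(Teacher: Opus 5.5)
Your argument is correct and is essentially the paper's: you take a nonnegative compactly supported $f$, use that $\mathcal H_t f(x)\to\int f\,d\gamma_\infty>0$ for every $x$ as $t\to\infty$, and use a single two-point difference, which gives level sets of infinite (or arbitrarily large) Lebesgue measure for all four operators. The differences are minor. You use the fixed lower time $t_0=1$ together with the Gaussian-tail decay of $\mathcal H_1 f$ at infinity, which gives the explicit sequence $1<T(R)\simeq\log R$ valid on all of $\{R_0\le|x|\le R\}$. The paper instead lets $t_0\to0$ at points off the support of $f$, and treats the variation through the pointwise bound $\mathcal H_*f\le|f|+\|\mathcal H_tf\|_{v(\varrho),\R_+}$.
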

 Notice, however,  
 that   the local parts of these  operators in the Euclidean setting  behave exactly like their Gaussian counterparts,
due to the local equivalence between $dx$ and $d\gamma_\infty$.

\medskip
\begin{proof}
 
Our argument goes via the maximal operator $\mathcal H_*f=\sup_{t>0} \mathcal H_t f$.

Consider a non-negative function $f \in L^1(\gamma_\infty) \cap L^1(dx)$, not identically zero, with compact support.
 For any $x \in \R^n$,
 Kolmogorov's formula \eqref{Kolmo} 
 yields
 \[
\lim_{t\to +\infty} \mathcal H_t f(x) = \int_{\R^n}f(y)\,d\gamma_\infty(y) =: \ell_\infty > 0.
\]
Then we take $0<\alpha<\ell_\infty$ and see that the level set of $\mathcal H_*f$ is
$$
\{ x\in\R^n: \mathcal H_* f(x)>\alpha \} = \R^n,
$$
of infinite Lebesgue measure. Thus $\mathcal H_*$ satisfies neither weak $(1,1)$ nor strong $(p,p)$ estimates
(but $\mathcal H_*$ is bounded from $L^\infty (dx)$ to $L^\infty(dx)$, as seen from  Kolmogorov's formula \eqref{Kolmo}).
\medskip

As a consequence, the variational seminorms of $\|\mathcal H_tf(x)\|_{v(\varrho),\R_+}$ with any $\varrho \ge 1$  are unbounded on $L^p(dx)$ for $1\le p<\infty$, also in a weak sense.
 Indeed, for any
 $\varrho \ge 1$    
 the simple pointwise bound 
\[\mathcal{H}_* f(x) \le |f(x)| + \|\mathcal{H}_t f(x)\|_{v(\varrho), \R_+}\] holds. 
Since $f \in L^p(dx)$, if the variation operator were bounded in $L^p(dx)$,
 this inequality would imply the $L^p(dx)$ boundedness of $\mathcal{H}_* $, which is false. 

\medskip

The same failure applies to the jump operator and to oscillation
 with $\varrho \ge 1$, as we will now see.

We choose $f$ as above and notice that
for any $x \notin \text{supp}f$, we have $\lim_{t\to 0} \mathcal{H}_t f(x) = f(x) = 0$. 
The function $t \mapsto \mathcal{H}_t f(x)$ is continuous and converges to $\ell_\infty>0$ as $t \to +\infty$.
 We consider a two-point sequence $(t_i)_0^1$ and see that for     $x \notin \text{supp}f$ 
 \begin{equation}\label{counterex}
 |\mathcal H_{t_1} f(x)  - \mathcal H_{t_0} f(x)|    \to \ell_\infty  \qquad \text{as} \qquad t_0 \to 0, \;\; t_1 \to \infty.
 \end{equation}
 For each  $x \notin \text{supp}f$ 
 we can thus find a sequence  $(t_i)_0^1$ such that $|\mathcal H_{t_1} f(x)  - \mathcal H_{t_0} f(x)| > \ell_\infty/2$,
and so the jump counting function satisfies $N_{\ell_\infty/2} (\mathcal{H}_t f(x):\,t > 0) \ge 1$ 
on a set of infinite Lebesgue measure. 
This excludes strong and weak  $ L^p(dx)$ boundedness for the jump operator.

To deal with   the oscillation, we observe that the convergence of $\mathcal{H}_t f(x)$ as $t \to 0$ or $t \to \infty$ is locally uniform in 
 $\R^n \setminus \text{supp}f$, and so is that in \eqref{counterex}. Since
 $$
 \mathcal O^\varrho_{\{t_i\}, 1} (\mathcal H_t f(x):\,t> 0) \ge  |\mathcal H_{t_1} f(x)  - \mathcal H_{t_0} f(x)|,
$$
this uniform  convergence means that we can select one sequence  $(t_i)_0^1$ which makes $ \mathcal O^\varrho_{\{t_i\}, 1} (\mathcal H_t f(x):\,t> 0) > \ell_\infty/2$ 
on a set of arbitrarily large Lebesgue measure. Hence, no strong or weak uniform oscillation inequalities can hold.

\end{proof}

 \end{document}